\documentclass[12pt]{amsart}
\usepackage{amssymb,amsmath,amsthm,latexsym}
\usepackage{dsfont}
\usepackage{amssymb}
\usepackage{indentfirst}
\usepackage{graphicx}
\usepackage{caption}
\usepackage{multicol}
\usepackage{enumitem}
\usepackage{hyperref}
\usepackage{tikz-cd}
\hypersetup{citecolor=red,}   
\usepackage{amsfonts, array, hhline} 
\usepackage{color}
\usepackage{lmodern}
\usepackage[left=1in,right=1in,top=1in,bottom=1.5in]{geometry}
\usepackage[mathscr]{euscript}
\usepackage{makecell}

\makeatletter
\theoremstyle{definition}
\newtheorem*{rep@theorem}{\rep@title}
\newcommand{\newreptheorem}[2]{%
\newenvironment{rep#1}[1]{%
 \def\rep@title{#2 \ref{##1}}%
 \begin{rep@theorem}}%
 {\end{rep@theorem}}}
\makeatother

\numberwithin{equation}{section}

\theoremstyle{definition}
\newtheorem{dfn}{Definition}[section]

\newtheorem{thm}[dfn]{Theorem}
\newtheorem{lm}[dfn]{Lemma}
\newtheorem{prop}[dfn]{Proposition}
\newtheorem{crl}[dfn]{Corollary}

\newtheorem{question}{Question}
\newreptheorem{lm}{Lemma}

\theoremstyle{remark}
\newtheorem{rmk}[dfn]{Remark}

\newcommand{\pt}{\partial}
\newcommand{\mc}[1]{\mathcal{#1}}
\newcommand{\mf}[1]{\mathfrak{#1}}

\newcommand{\rar}{\rightarrow}
\newcommand{\la}{\langle}
\newcommand{\ra}{\rangle}

\newcommand{\R}{\mathbb{R}}

\newcommand{\C}{\mathbb{C}}
\renewcommand{\H}{\mathbb{H}}

\newcommand{\area}{{\rm area}}

\newcommand{\clconv}{\overline{\rm conv}}

\renewcommand{\area}{{\rm area}}

\newcommand{\ads}{\mathbb{A}d\mathbb{S}}
\renewcommand{\hat}{\widehat}
\renewcommand{\tilde}{\widetilde}
\newcommand{\fff}{\mathrm{I}}
\newcommand{\sff}{\mathrm{I\!I}}
\newcommand{\tff}{\mathrm{I\!I\!I}}
\newcommand{\ms}[1]{\mathscr{#1}}
\newcommand{\psl}{{\rm PSL}}
\renewcommand{\sl}{{\rm SL}}
\renewcommand{\ker}{{\rm Ker}}

\newcounter{notes}%

\newcommand\blfootnote[1]{%
  \begingroup
  \renewcommand\thefootnote{}\footnote{#1}%
  \addtocounter{footnote}{-1}%
  \endgroup
}

\title[]{Rigidity of anti-de Sitter (2+1)-spacetimes with convex boundary near the Fuchsian locus}

\author{Roman Prosanov}
\address{University of Vienna, Faculty of Mathematics, Oskar-Morgenstern-Platz 1, 1090 Vienna, Austria}
\email{roman.prosanov@univie.ac.at}

\author{Jean-Marc Schlenker}
\address{Jean-Marc Schlenker:
University of Luxembourg, Department of Mathematics, 
Maison du nombre, 6 avenue de la Fonte,
4364 Esch-sur-Alzette, Luxembourg}
\email{jean-marc.schlenker@uni.lu}


\begin{document}

\begin{abstract}
We prove that globally hyperbolic compact anti-de Sitter (2+1)-spacetimes with strictly convex spacelike boundary that is either smooth or polyhedral and whose holonomy is close to Fuchsian are determined by the induced metric on the boundary.
\end{abstract}

\maketitle
\blfootnote{MSC Class: 53C50; 53C45; 53C24; 53C30; 57K35; 52A15}
\tableofcontents

\section{Introduction and results}

\subsection{Motivation}


A deep geometric result states that the shape of a convex body in Euclidean 3-space is completely determined by the intrinsic geometry of its boundary~\cite{Pog}. Using this fact as a foundation for our study, we aim to investigate, to which extent it can be generalized. Standard proofs of this result rely significantly on the fact that a convex body has the topology of a 3-ball. However, it was observed that this phenomenon holds also in nontrivial topology, provided that we stay in the setting of homogeneous geometry. In the case of \emph{hyperbolic geometry}, which is the richest of Riemannian homogeneous geometries in dimension 3, manifestations of this were established, e.g., in the papers~\cite{Sch, Pro2}.

In the present paper, we are concerned with \emph{anti-de Sitter geometry}, which is a Lorentzian cousin of hyperbolic geometry. Namely, we deal with \emph{globally hyperbolic Cauchy compact anti-de Sitter (2+1)-spacetimes}. They appear, e.g., in models of quantum gravity, see the fundamental paper of Witten~\cite{Wit}. They are reminiscent in their properties to \emph{quasi-Fuchsian hyperbolic 3-manifolds}. In particular, it is conjectured that globally hyperbolic compact anti-de Sitter (2+1)-spacetimes with spacelike convex boundary are determined by the intrinsic geometry of the boundary. See, e.g.,~\cite[Questions 3.3--3.6]{BBD+}. Every such spacetime has two boundary components and can be canonically extended beyond the boundary to a maximal spacetime. Fundamentally, the conjecture means that the geometry of the whole spacetime is determined by the intrinsic geometry of two of its spacelike slices provided that the slices are convex in the opposite directions. In this article, we prove the conjecture among spacetimes whose holonomy is close to Fuchsian and whose boundary is either smooth or polyhedral. In the works~\cite{BMS, Pro4}, some other partial cases of the conjecture are established, see below for details.

\subsection{Parallels between hyperbolic and anti-de Sitter geometry}
\label{parallel}

For the whole manuscript, $S$ is a closed orientable surface of genus $k \geq 2$. In what follows, we take \emph{GHC AdS (2+1)-spacetimes} to mean globally hyperbolic compact anti-de Sitter (2+1)-spacetimes with spacelike convex boundary and \emph{GHMC AdS (2+1)-spacetimes} to mean globally hyperbolic maximal Cauchy compact anti-de Sitter (2+1)-spacetimes. The exact definitions are given in Section~\ref{ghmc}. The former ones are homeomorphic to $S \times [-1, 1]$ and the latter ones are homeomorphic to $S \times \R$. 

The similarity of GHMC AdS (2+1)-spacetimes to quasi-Fuchsian hyperbolic 3-manifolds was discovered by Mess in a groundbreaking paper~\cite{Mes}. In particular, Mess proved that the marked moduli space of GHMC AdS structures on $S\times \R$ is parameterized by the product of two copies of the Teichm\"uller space of $S$, see Section \ref{ghmc}. This is strongly reminiscent of the Bers Double Uniformization Theorem for quasi-Fuchsian hyperbolic 3-manifolds~\cite{Ber}.

The GHMC AdS (2+1)-spacetimes and quasi-Fuchsian hyperbolic 3-manifolds are also similar in that in both cases they contain a nonempty inclusion-minimal compact totally convex subset, \emph{the convex core}. Here, a subset is \emph{totally convex} if it contains every geodesic segment between every pair of its points. The convex core is either a totally geodesic surface or it has nonempty interior and its topological boundary is the disjoint union of two closed surfaces whose intrinsic metric is hyperbolic and that are ``pleated'' along a measured geodesic lamination. 

In the hyperbolic case, Thurston conjectured that the corresponding marked moduli space can be parameterized both by the induced metrics on the boundary of the convex core and by the pleating laminations. These questions have two aspects: the realization part -- to describe the image of the respective map from the marked moduli space, and the rigidity part -- to show that the map is injective. For the first conjecture, the realization part follows, e.g., from the work of Labourie~\cite{Lab}; the rigidity is only known to hold for an open dense set, as shown in the work~\cite{Pro2} of Prosanov. For the second conjecture, the image was described in~\cite{BO} by Bonahon--Otal; the rigidity has been obtained only recently in~\cite{DS} by Dular--Schlenker. (An important earlier advance was made by Bonahon in~\cite{bonahon}, where he proved the rigidity in a neighborhood of the Fuchsian locus.) Prior to that, it was only known in a related, though slightly different setting when $S$ is the once-punctured torus, due to Series~\cite{Ser}.

Mess conjectured that similar parameterizations hold for GHMC AdS (2+1)-spacetimes. In this case, the realization of the induced metrics was established by Diallo, published as an appendix to paper~\cite{BDMS}. The realization of pleating laminations was shown in~\cite{BS3} by Bonsante--Schlenker, where the rigidity near the Fuchsian locus was also deduced. Except that, the rigidity parts of these problems remain open.


\subsection{Manifolds with boundary}

As we saw, geometric rigidity questions are interconnected with the corresponding realization questions. In the case of Euclidean convex bodies, the relevant realization questions were answered by Nirenberg~\cite{Nir} in the smooth case, and by Alexandrov~\cite{Ale} in the polyhedral and, furthermore, in the general case of metric structures of curvature $K\geq 0$ in the sense of Alexandrov, without any additional regularity assumption.

The analogy of Section~\ref{parallel} suggests exploring to what extent other remarkable properties of hyperbolic 3-manifolds extend to AdS (2+1)-spacetimes. In the hyperbolic case, Thurston also conjectured the following beautiful fact, established by Labourie in~\cite{Lab}: given a 3-dimensional manifold $M$ that admits a (non-complete) hyperbolic metric $m$ for which the boundary is locally convex, then any Riemannian metric on $\pt M$ of curvature $\kappa >-1$ can be realized as the induced intrinsic metric for some choice of $m$. In~\cite{Sch}, Schlenker proved that $m$ is uniquely determined by the boundary metric, provided that the boundary is smooth and strictly convex, as is the case for Labourie's realizations. Here, the boundary is said to be \emph{strictly convex} if the shape operator is positive definite. 


From now on, $M$ is a 3-manifold diffeomorphic to $S \times [-1,1]$. (The diffeomorphism is fixed for the whole paper.) The anti-de Sitter analog that we are interested in is the following question.

\begin{question} \label{q:main}
Let $s_+, s_-$ be two Riemannian metrics on $S$ of curvature $\kappa <-1$. Is there a unique GHC AdS structure on $M$ such that the boundary is smooth and strictly convex, the induced metric on its future boundary is $s_+$, and the induced metric on its past boundary is $s_-$? 
\end{question}

The existence part of this statement was proved by Tamburelli \cite{Tam}. However, the uniqueness remains elusive in general. In~\cite{BMS}, Bonsante--Mondello--Schlenker showed the uniqueness for the case when $s_+$, $s_-$ have constant curvatures $\kappa_+, \kappa_- <-1$ respectively, satisfying $\kappa_+=-\kappa_-/(\kappa_-+1)$.

In fact, a key open question is, given such a spacetime $M$, 
whether any first-order deformation of the AdS structure on $M$ that leaves invariant the induced metric on the boundary is trivial. We will discuss this further below.

There is a polyhedral counterpart to this story, when $s_+, s_-$ are so-called \emph{cone-metrics}. There are, however, some special peculiarities, since the most natural notion of polyhedral boundary (locally modeled on polyhedral sets) turns out to be insufficient to capture all the picture of what is happening. Some singularities, similar to those of pleating laminations, may appear, which obstruct the analysis of this case. We refer to~\cite{Pro4} of Prosanov for a description, and to~\cite{Pro2} also of Prosanov for a discussion of these phenomena in the hyperbolic setting. In~\cite{Pro4}, a polyhedral realization result for all pairs of cone-metrics satisfying the appropriate curvature condition is derived. Furthermore, the rigidity is established, provided that the cone-metrics are sufficiently small in some sense.
In~\cite{Pro2}, the corresponding hyperbolic realization statement is obtained, and the rigidity is shown in a generic case. Luckily, the mentioned singularities lie outside of the scope of the present article, since they happen away from the Fuchsian locus.

Note that both this paper and~\cite{Pro4} exploit the fact that anti-de Sitter geometry is a subgeometry of projective geometry. However, the other tools are quite different. In the present article, we apply the infninitesimal Pogorelov map and obtain local rigidity near the Fuchsian locus. In~\cite{Pro4}, the technique of geometric transitions is employed, which allows to deduce global rigidity for small enough metrics on the boundary.


\subsection{Our results}

Let us fix some notations to state the main results below. Let $\ms M$ be the space of GHC AdS structures on $M$ with smooth strictly convex boundary, considered up to isotopy. (The topology on $\ms M$ is discussed in Section~\ref{secembed}.) There is a distinguished subspace $\ms M_0 \subset \ms M$ of \emph{Fuchsian} structures, i.e., the ones with Fuchsian holonomy. We will give an exact definition in Section~\ref{secembed}. For now, we remark that, equivalently, $m$ is Fuchsian if and only if $(M, m)$ contains an embedded totally geodesic surface isotopic to $S \times \{0\}$. We also denote by $\ms S$ the space of smooth metrics on $S$ with curvature $\kappa <-1$, considered up to isotopy.

The Gauss equation in anti-de Sitter 3-space $\ads^3$ shows that a smooth spacelike strictly convex surface in $\ads^3$ has an induced metric with curvature $\kappa <-1$. Therefore, there is a natural map $\ms I:\ms M \to \ms S\times \ms S$ that sends a GHC AdS structure on $M$ to the induced metrics on the two connected components of its boundary. A strengthening of Question \ref{q:main} is whether $\ms I$ is a homeomorphism. The following statement is a partial answer to it in a neighborhood of Fuchsian structures.

\begin{thm}
\label{main*}
There exists a neighborhood $U$ of $\ms M_0$ in $\ms M$ and an open subset $U'\subset \ms S\times \ms S$, which contains the diagonal, such that $\ms I|_U$ is a homeomorphism from $U$ to $U'$.
\end{thm}

We want to stress the rigidity part of Theorem~\ref{main*}.

\begin{crl} \label{main}
 There exists a neighborhood $U$ of $\ms M_0$ in $\ms M$ such that, for any $m_1, m_2 \in U$, if there exists a map $f: (M, m_1) \rar (M, m_2)$ isotopic to the identity such that $f|_{\pt M}$ is an isometry, then $f$ is isotopic to an isometry.
\end{crl}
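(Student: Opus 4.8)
The plan is to derive Corollary~\ref{main} directly from Theorem~\ref{main*} by a soft argument combining the injectivity of $\ms I|_U$ with the Ehresmann-type fact that isometric boundary data pins down the developing map up to the isometry group of $\ads^3$. First I would observe that Corollary~\ref{main} is essentially a reformulation of the injectivity half of Theorem~\ref{main*}: if $m_1, m_2 \in U$ and $f \colon (M, m_1) \to (M, m_2)$ is isotopic to the identity with $f|_{\pt M}$ an isometry, then $f$ exhibits an isometry between the boundaries of the two GHC AdS structures respecting the labeling of the future and past components (this uses that $f$ is isotopic to the identity, hence preserves the two ends), so $\ms I(m_1) = \ms I(m_2)$ in $\ms S \times \ms S$. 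By Theorem~\ref{main*}, $\ms I|_U$ is injective, whence $m_1 = m_2$ as points of $\ms M$, i.e., the two AdS structures on $M$ differ by an isotopy.

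The remaining step is to upgrade ``$m_1 = m_2$ in $\ms M$, and $f|_{\pt M}$ is an isometry'' to ``$f$ is isotopic to an isometry $(M, m_1) \to (M, m_2)$.'' Since $m_1 = m_2$ up to isotopy, fix an isotopy $\phi$ with $\phi^* m_2 = m_1$; then $g := \phi^{-1} \circ f$ is an isometry of $(M, m_1)$ to itself... wait, more carefully: $\phi \colon (M,m_1) \to (M,m_2)$ is an isometry and is isotopic to the identity, so it suffices to show $f$ is isotopic to $\phi$. The composition $h := \phi^{-1} \circ f \colon (M, m_1) \to (M, m_1)$ is isotopic to the identity and restricts to an isometry of $\pt(M, m_1)$; I must show $h$ is isotopic through... actually the cleanest route is to show directly that any isometry of $(M, m_1)$ isotopic to the identity is the identity, which follows because an AdS isometry isotopic to the identity acts trivially on the holonomy representation (it conjugates it by an inner automorphism realized by an element commuting with the image, and for these holonomies the centralizer is trivial), hence is the restriction of the identity of $\ads^3$ under the developing map. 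Thus $h$ need not be literally the identity, but $h$ and the identity induce the same AdS structure with the same developing map up to $\mathrm{Isom}(\ads^3)$, so $h$ is isotopic to an isometry — and since the boundary-induced metric data forces the two structures to coincide, $f$ is isotopic to $\phi$, which is an isometry.

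I expect the main subtlety to be the bookkeeping of \emph{which} isometry and \emph{which} isotopy class: Theorem~\ref{main*} gives equality in $\ms M$, a quotient by isotopy, and one has to make sure the isometry produced between $(M, m_1)$ and $(M, m_2)$ can be taken isotopic to the given $f$ rather than merely to some other self-map. The key inputs making this routine are (i) that $f$ isotopic to the identity forces it to match future-to-future and past-to-past, so that the boundary metric pair is literally $\ms I(m_1) = \ms I(m_2)$ with the correct ordering, and (ii) the rigidity of GHC AdS structures under deformations fixing the boundary metric, which is precisely the content packaged into Theorem~\ref{main*}'s proof and which guarantees that isotopic-to-identity isometries are, up to isotopy, detectable at the level of developing maps. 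Modulo these observations the corollary is immediate, so I would keep the argument to a short paragraph pointing back to Theorem~\ref{main*} and to the definition of $\ms M$ as structures-up-to-isotopy.
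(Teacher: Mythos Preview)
Your approach is correct and matches the paper's: Corollary~\ref{main} is stated there with no separate proof, being an immediate consequence of the injectivity in Theorem~\ref{main*}. Your Steps~1--2 are exactly the intended argument.

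However, you overcomplicate Step~3. Once Theorem~\ref{main*} gives $m_1=m_2$ in $\ms M$, you have an isometry $\phi:(M,m_1)\to(M,m_2)$ that is isotopic to the identity. Since $f$ is \emph{also} isotopic to the identity by hypothesis, $f$ and $\phi$ are isotopic to each other; that is the whole argument. There is no need to analyze $h=\phi^{-1}\circ f$, centralizers of the holonomy, or developing maps: the isotopy-to-identity assumption on $f$ does all the work. Your digression there is not incorrect, just unnecessary, and the stream-of-consciousness ``wait\ldots actually\ldots'' passages should be excised from a final write-up.
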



In Section \ref{sc:polyhedral} we consider analogous statements for GHC AdS (2+1)-spacetimes with strictly polyhedral boundary. Here, the notion of being \emph{strictly polyhedral} is a polyhedral analog to strict convexity, see Section \ref{sc:polyhedral} for the definitions. We denote by $V$ a finite subset of $\partial M$, and denote by $V_+$ and $V_-$ its subsets belonging to each boundary components. We assume that both $V_+$ and $V_-$ are nonempty. We then denote by $\ms M(V)$ the space of GHC AdS structures on $M$ with strictly polyhedral boundary and with vertices at $V$. We also denote by $\ms M_0(V)$ the subset of Fuchsian structures, defined the same as in the smooth case.

For each of those AdS structures in $\ms M(V)$, the induced metric on $\partial M$ is hyperbolic, with cone-singularities of angle larger than $2\pi$ at the points of $V$. We consider $V_\pm$ as subsets of $S$ and denote by $\ms S(V_\pm)$ the spaces of hyperbolic metrics with cone-singularities at $V_\pm$ respectively of angles $>2\pi$. We have the induced metric map $\ms I_V: \ms M(V) \rar \ms S(V_+) \times \ms S(V_-)$. We then have the following results.

\begin{thm}
\label{mainp*}
There exists a neighborhood $U$ of $\ms M_0(V)$ in $\ms M(V)$ and an open subset $U'\subset \ms S(V_+)\times \ms S(V_-)$, which contains the diagonal, such that $\ms I_V|_U$ is a homeomorphism from $U$ to $U'$.
\end{thm}

\begin{crl}
\label{mainp}
There exists a neighborhood $U$ of $\ms M_0(V)$ in $\ms M(V)$ such that for any $m_1, m_2 \in U$, if there exists $f: (M, m_1) \rar (M, m_2)$ isotopic to the identity such that $f|_{\pt M}$ is an isometry, then $f$ is isotopic to isometry.
\end{crl}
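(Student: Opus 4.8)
The plan is to deduce this rigidity statement directly from Theorem~\ref{mainp*}, in exact parallel with how Corollary~\ref{main} follows from Theorem~\ref{main*}. Suppose $m_1, m_2 \in U$ and that $f\colon (M, m_1) \to (M, m_2)$ is isotopic to the identity with $f|_{\pt M}$ an isometry. The first step is to observe that, since $f$ is a diffeomorphism isotopic to the identity restricting to an isometry on the boundary, it carries the induced metric of $m_1$ on each boundary component to the induced metric of $m_2$ on the corresponding component (the "future" component goes to the "future" component because $f$ is isotopic to the identity). After identifying $\ms M(V)$ and $\ms S(V_+)\times\ms S(V_-)$ with their respective sets of isotopy classes, this says precisely that $\ms I_V(m_1) = \ms I_V(m_2)$. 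The second step is then to invoke the injectivity of $\ms I_V|_U$ furnished by Theorem~\ref{mainp*}: since $\ms I_V|_U$ is a homeomorphism onto $U'$, and in particular injective, we conclude $m_1 = m_2$ as points of $\ms M(V)$, i.e., as isotopy classes of GHC AdS structures. The third step is to unwind what equality in $\ms M(V)$ means: there is a diffeomorphism $g\colon M \to M$ isotopic to the identity pulling back $m_2$ to $m_1$, so $g$ is an isometry from $(M,m_1)$ to $(M,m_2)$. Finally, comparing $f$ and $g$: both are isotopic to the identity, hence $f$ is isotopic to $g$, and $g$ is an isometry, which is exactly the assertion.

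The one point requiring a little care — and the only genuine obstacle — is the bookkeeping around isotopy classes and the marking. One must make sure that "$\ms I_V(m_1)=\ms I_V(m_2)$" is legitimately extracted from the hypothesis: the subtlety is that $f$ is only assumed to be an isometry \emph{on $\pt M$}, not globally, so we are using $f$ solely to compare the two boundary metrics, and we are using that $f$ is isotopic to the identity so that it respects the labelling of the two boundary components of $M \cong S\times[-1,1]$ and induces the identity on $\pi_1(S)$ up to conjugation, matching the markings used to define $\ms S(V_\pm)$ up to isotopy. Once this identification of data is set up correctly, the rest is a formal consequence of Theorem~\ref{mainp*}. The argument is insensitive to the polyhedral versus smooth distinction, so it is verbatim the proof of Corollary~\ref{main} with $\ms M$, $\ms S\times\ms S$, $\ms I$ replaced by $\ms M(V)$, $\ms S(V_+)\times\ms S(V_-)$, $\ms I_V$; in the write-up I would simply point to that proof rather than repeat it.
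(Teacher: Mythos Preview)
Your proposal is correct and follows exactly the approach implicit in the paper: Corollary~\ref{mainp} is stated without proof as an immediate consequence of Theorem~\ref{mainp*}, and your argument spells out precisely that deduction (in parallel with Corollary~\ref{main} following from Theorem~\ref{main*}). The bookkeeping point you flag about $f$ respecting the boundary labelling and the markings is the only thing to check, and you handle it correctly.
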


\subsection{A word on the proofs}

The main issue in answering Question \ref{q:main} lies in an {\em infinitesimal rigidity} statement, which can be formulated as follows: Is the differential of $\ms I$ always injective? This can also be stated as follows.

\begin{question}\label{q:infrig}
  Let $m\in \ms M$, and $(s_+, s_-)=\ms I(m)$ be the pair of the induced metrics on its boundary. Let $\dot m$ be a non-trivial first-order deformation of $m$. Is the corresponding variation $(\dot s_+, \dot s_-)$ of the induced metrics on the boundary necessarily nonzero? 
\end{question}

Note that a first-order deformation $\dot m$ can include a deformation of the boundary of $(M,m)$, but also a deformation of its holonomy representation.


Unfortunately, we do not know how to prove this infinitesimal rigidity statement in general. For spacetimes with Fuchsian holonomy, however, there is a ``trick'' that makes it possible. Namely, we use the infinitesimal Pogorelov map (see Section \ref{ssc:pogorelov}) to ``translate'' this infinitesimal rigidity question into another one concerning globally hyperbolic Minkowski spacetimes, also with Fuchsian holonomy.

We are then in a position to use infinitesimal rigidity statements proved in this Minkowski setting, by Smith \cite{Smi} when the boundary is smooth and by Fillastre--Prosanov \cite{FP} when the boundary is polyhedral.

In~\cite{bonahon}, Bonahon proved the rigidity part of Thurston's pleating lamination conjecture for quasi-Fuchsian hyperbolic 3-manifolds near the Fuchsian locus. A similar strategy was later applied by Bonsante--Schlenker in~\cite{BS3} in the respective anti-de Sitter situation. There, the infinitesimal rigidity at the Fuchsian locus was translated to a statement in the language of pure Teichm\"uller theory; no use of the Pogorelov map was necessary. The rest in~\cite{bonahon, BS3} was accomplished by interpreting the infinitesimal rigidity as a statement on transversality of some associated manifolds and then by exploring the special structure of the space of measured laminations. In our smooth case, the last part has to be replaced with some arguments from functional analysis and an application of the Nash--Moser inverse function theorem. 

\subsection{Related results}

There is a vast body of related results in this area of geometry. Here we survey some of them that we have not mentioned yet, though our exposition does not aim to be exhaustive. For other versions of such a survey, we refer to~\cite{FS4} by Fillastre--Smith and to~\cite{Sch5} of Schlenker.

We rely on a resolution of Question~\ref{q:main} for Fuchsian structures due to Labourie--Schlenker~\cite{LS} in the smooth case and due to Fillastre~\cite{Fil} in the polyhedral case. The hyperbolic polyhedral case was resolved by Fillastre in an earlier paper~\cite{Fil2}, while the realization part of the hyperbolic smooth case is due to Gromov~\cite{Gro2}. We note that the methods of these papers do not allow to step into a neighborhood of the Fuchsian locus.

It is natural to wonder whether there exists a common generalization of the smooth and the polyhedral cases. For convex bodies in Euclidean 3-space, this question was a starting point for the development by Alexandrov of theory of Alexandrov spaces~\cite{Ale}, now a cornerstone of modern geometry. The rigidity problem for convex bodies with general boundary was resolved by Pogorelov~\cite{Pog}, and is known to be notoriously difficult. In the setups relevant to us, some realization results for general convex boundary are~\cite{Slu, FS6, Lab2} by Slutskiy, Fillastre--Slutskiy and Labeni respectively. The only known rigidity result for nontrivial topology and general convex boundary is~\cite{Pro} by Prosanov for Fuchsian hyperbolic 3-manifolds.

Up to scaling, there are three types of spacetimes of constant curvature: anti-de Sitter, Minkowski and de Sitter. (Note that in dimension (2+1), solutions of Einstein's equations necessarily have constant curvature.) Mess also classified GHMC Minkowski (2+1)-spacetimes in~\cite{Mes}, while the classification of the de Sitter ones was completed by Scannell in~\cite{Sca}. Questions similar to ours can also be formulated in these settings, though they have to be stated not in the sense of extending metrics from the boundary but rather in the sense of simultaneous isometric embeddings of pairs of surfaces into pairs of spacetimes with the same holonomy. In the Minkowski setting, the smooth case is due to Smith~\cite{Smi} and the polyhedral case is due to Fillastre--Prosanov~\cite{FP}. The de Sitter setting turns our to be dual in some sense to the hyperbolic setting and is covered by the papers~\cite{Sch} and~\cite{Pro3} of Schlenker and Prosanov. It is interesting to note that the toolboxes used differ from setting to setting and between the smooth and the polyhedral cases.

Question~\ref{q:main} and its hyperbolic counterpart can be interpreted as a realization and rigidity statements for convex domains in anti-de Sitter 3-space or in hyperbolic 3-space that are invariant with respect to a discrete and faithful representation of a surface group. It is natural to wonder about the validity of universal versions of such results, i.e., realizations of non-invariant metrics without group actions. Clearly, for the rigidity to hold, one must impose a kind of gluing condition at infinity. We refer to a survey~\cite{Sch4} of Schlenker and to papers~\cite{BDMS, CS2, MS, Mes2} of Bonsante--Danciger--Maloni--Schlenker, Chen--Schlenker, Merlin--Schlenker and Mesbah for some advances on the respective realization problems. The rigidity, while expected to hold, remains widely open.

The classical result of Bers~\cite{Ber} concerns the prescription of conformal structures at infinity for quasi-Fuchsian hyperbolic 3-manifolds. One can, however, define also measured foliations at infinity, which are analogs at infinity of the pleating laminations of the convex core. In this setting, an analog of Bonahon's result~\cite{bonahon} was obtained in~\cite{Cho} by Choudhury.

Question~\ref{q:main} is also equivalent to a question of simultaneous isometric embeddings of two metrics on $S$ into GHMC AdS (2+1)-spacetimes. Another interesting problem is to attempt to embed only one metric into a spacetime with prescribed left or right metric in the sense of Mess. This problem in the smooth case was fully resolved in~\cite{Tam} by Tamburelli (realization) and in~\cite{CS} by Chen--Schlenker (rigidity). In the polyhedral case, the realization was shown in~\cite{Pro4} by Prosanov, as well as the rigidity for sufficiently small metrics. In the Minkowski setting, instead of left or right metric it is natural to prescribe the linear part of the holonomy. In the smooth case, this follows from the work~\cite{TV} by Trapani--Valli, while in the polyhedral case, it is proved in~\cite{FP} by Fillastre--Prosanov. Actually, we will employ some of these results in our proof. We note that analogous questions in the de Sitter and in the hyperbolic settings (when the left or right metric is replaced by a conformal structure at infinity) are fully open.

\vskip+0.2cm

\textbf{Acknowledgments.} The first author is very grateful to Fran\c{c}ois Fillastre, Thomas K\"orber, Andrea Seppi and Graham Smith for useful discussions and to Michael Eichmair for helpful comments and his support. We are very thankful to the anonymous referee for plenty of valuable remarks.

This research of the first author was funded in whole by the Austrian Science Fund
(FWF) https://doi.org/10.55776/ESP12. For open access purposes, the author has applied a CC BY public copyright license to any author-accepted manuscript version arising
from this submission.

\vskip+0.2cm

\textbf{Notations.}
For the reader convenience, we list the main notations used in the smooth part of the paper. (The notation in the polyhedral setting is similar.)

\medskip
\begin{center}
\begin{tabular}[h]{|c|c|}
  \hline
  $S$ & a closed surface of genus $k \geq 2$ \\
  \hline
  $M$ & 3-manifold diffeomorphic to $S\times [-1,1]$ \\
  \hline
  $\mc S$ & the space of Riemannian metrics on $S$ of curvature $<-1$\\
  \hline
  $\mc M$ & \makecell{the space of GHC AdS structures on $M$ }\\
        \hline
   $G$ & ${\rm PSL}(2, \R) \times {\rm PSL}(2, \R)$\\
   \hline
   $\mc R$ & the space of representations $\pi_1S \rar G$ up to conjugation \\
   \hline
   $\mc E_\pm$ & \makecell{equivariant strictly future-/past-convex embeddings of $S$ \\ into $\ads^3$ up to isometry} \\
   \hline
   $\mc E$ & \makecell{pairs of embeddings in $\mc E_+ \times \mc E_-$ \\ equivariant with respect to the same representation}\\
   \hline
   $\mc{GC}_\pm(s)$ & \makecell{the spaces of Gauss--Codazzi fields for a metric $s$ on $S$}\\
   \hline
   $\mc I$ & the induced metric map $\mc E \rar \mc S \times \mc S$ \\
   \hline
   $\ms T$ & the Teichm\"uller space of $S$ \\
   \hline
\end{tabular}
\end{center}

We add the subscript $F$ for subspaces of \emph{Fuchsian} objects, i.e., related to the representations of $\pi_1S$ into $G_F := {\rm PSL}(2,\R)$. We add the superscript $0$ when anti-de Sitter space is replaced by Minkowski space. We use the font $\ms S$, $\ms M$, etc., for the quotients of $\mc S$, $\mc M$, etc., by isotopies. We use the hat: $\hat{\mc R}$, $\hat{\mc E}$, etc., for spaces {\bf not} up to conjugation/isometries.  

\section{Preliminaries}
\label{prelim}

\subsection{Anti-de Sitter geometry}
\label{secads}

The main reference for us is the excellent survey~\cite{BS2} by Bonsante--Seppi. Other good sources include the original paper of Mess~\cite{Mes}, an accompanying paper~\cite{Mes+}, a survey~\cite{Bar2} of Barbot and a survey~\cite{FS4} of Fillastre--Smith.

First we recall the quadric model for anti-de Sitter 3-space. Let $\R^{2,2}$ be the 4-dimensional real vector space equipped with a symmetric bilinear form of signature $(2,2)$: 
\[\la x, y \ra=x_1y_1+x_2y_2-x_3y_3-x_4y_4.\]
Anti-de Sitter 3-space is modeled on the following quadric
\[\H^{2,1}:=\{x \in \R^{2,2}:~\la x, x \ra=-1\}.\]
The induced metric is a Lorentzian metric of constant curvature $-1$. 

We will, however, consider as the main model for anti-de Sitter 3-space the projective quotient of $\H^{2,1}$. We will denote it by $\ads^3$ and consider it as a subset of $\mathbb{RP}^3$. Recall that it is a minimal model space for anti-de Sitter geometry, i.e., any other Lorentzian 3-space of curvature $-1$ with maximal isometry group is a covering of $\ads^3$, see~\cite{BS2}. 

We also briefly recall the $\psl(2, \R)$-model for $\ads^3$, which is a special feature of dimension 3. Note that on the space of real 2$\times$2-matrices the determinant is a quadratic form of signature $(2,2)$. Considering the polarization of the minus determinant as a bilinear form, we can identify this space with $\R^{2,2}$. Under this identification, $\sl(2, \R)$ gets identified with $\H^{2,1}$, and $\psl(2,\R)$ gets identified with $\ads^3$. 

This model admits a straightforward description of the isometry group of $\ads^3$. We are interested only in its identity component, which we denote by $G$. Since for $A, B, X \in \sl(2,\R)$ we have $\det(X)=\det(AXB^{-1})$, the left and right multiplications by the elements of $\psl(2, \R)$ are isometries. This produces a map $\psl(2, \R) \times \psl(2,\R) \rar G$. A dimension count shows that this is actually an isomorphism
\begin{equation}
\label{g}
G \cong \psl(2, \R) \times \psl(2,\R).
\end{equation}

We fix an orientation and a time-orientation on $\ads^3$. The latter means that we pick a (nowhere zero) timelike vector field $\xi$ on $\ads^3$ to define future and past. Namely, for every $x \in \ads^3$ the subset of timelike vectors of $T_x\ads^3$ has two connected components. The vectors from the same component as $\xi(x)$ are called \emph{future} vectors, the others are called \emph{past}.

\subsection{Quasi-Fuchsian representations}
\label{ssc:quasifuchsian}




To proceed further, we need to recall the basics of Teichm\"uller theory. The \emph{Teichm\"uller space} $\ms T$ is the space of isotopy classes of hyperbolic metrics on $S$. Denote by $\H^2$ the hyperbolic plane and by $G_F$ the identity component of its isometry group. Every hyperbolic metric on $S$ is a $(G_F, \H^2)$-structure in the sense of Thurston. For an introduction to Thurston's approach to geometric structures (later we will also apply this framework to anti-de Sitter structures), we refer to Thurston's notes~\cite{Thu} and an accompanying paper~\cite{CEG}. Provided that $S$ and $\H^2$ are considered oriented, the holonomies of orientation-preserving hyperbolic structures form a connected component in the representation space of $\pi_1S$ to $G_F$. We call such representations \emph{Fuchsian} and denote by $\mc R_F$ the quotient of this component by conjugation of $G_F$, the component itself is denoted by $\hat{\mc R}_F$. In terms of notation, we will not distinguish between representations and their classes. It is well-known that the map sending a hyperbolic structure to its holonomy provides an identification $\ms T \cong \mc R_F$. See, e.g., the book of Farb--Margalit~\cite{FM} for an introduction. 

Consider $G_F \cong \psl(2, \R)$ as the diagonal subgroup of $G \cong \psl(2,\R) \times \psl(2,\R)$. We call a representation $\rho: \pi_1 S \rar G$ \emph{quasi-Fuchsian}
if it is a deformation of a Fuchsian representation.
By definition, quasi-Fuchsian representations form a connected component of a representation space. We denote by $\mc R$ the quotient by conjugation of $G$, the component itself is denoted by $\hat{\mc R}$. The projections of a quasi-Fuchsian representation to both factors in~(\ref{g}) are discrete and faithful. Hence, we have $\hat{\mc R}\cong \hat{\mc R}_F \times \hat{\mc R}_F$ and
\begin{equation}
\label{r}
\mc R \cong \mc R_F \times \mc R_F \cong \ms T \times \ms T.
\end{equation}
For a given $\rho \in \mc R$, identification~(\ref{r}) produces two hyperbolic metrics on $S$ (given up to isotopy). They are called \emph{the left and the right metrics} of $\rho$.

The quasi-Fuchsian representations in this sense are exactly the holonomies of GHMC AdS (2+1)-spacetimes, as we explain in the next subsection. We point out a slight discrepancy with the classical setting of 3-dimensional hyperbolic geometry that was discussed in the introduction, where $G_F \cong {\rm PSL}(2,\R)$ is considered naturally embedded into ${\rm PSL}(2,\C)$ and only quasi-conformal deformations of Fuchsian representations are called quasi-Fuchsian. They then form the interior of the respective component of the representation space. There are also representations at the boundary, not quasi-Fuchsian in this sense, which do not have counterparts in the anti-de Sitter setting.


\subsection{Globally hyperbolic anti-de Sitter spacetimes}
\label{ghmc}

An \emph{anti-de Sitter (2+1)-spacetime} is a 3-manifold equipped with an anti-de Sitter metric, i.e., with a Lorentzian metric of curvature $-1$. Provided that it is oriented and time-oriented, it is equipped with a $(G, \ads^3)$-structure in the sense of Thurston. Hence, every anti-de Sitter (2+1)-spacetime $M$ is equipped with a holonomy representation $\rho: \pi_1M \rar G$ and a $\rho$-equivariant developing map $f: \tilde M \rar G$, which are determined up to an isometry of $\ads^3$. A (2+1)-spacetime is \emph{globally hyperbolic} if it admits a \emph{Cauchy surface}, i.e., a surface that is intersected exactly once by every inextensible causal curve. All Cauchy surfaces in a spacetime are homeomorphic to each other. See, e.g., the book~\cite{ONe} by O'Neill for an introduction to Lorentzian geometry. By a result of Geroch~\cite{Ger}, any globally hyperbolic (2+1)-spacetime admits a parametrization $S \times \R$, in which all fibers $S \times \{r\}$ are Cauchy surfaces.

A globally hyperbolic spacetime $N$ is \emph{maximal} if every isometric embedding $N \rar N'$ that sends a Cauchy surface of $N$ to a Cauchy surface of $N'$ is onto. It is \emph{Cauchy compact} if its Cauchy surfaces are compact. Recall that we refer to globally hyperbolic maximal Cauchy compact anti-de Sitter spacetimes as GHMC AdS for short. Any GHMC (2+1)-spacetime $N$ is diffeomorphic to $S \times \R$, and hence $\pi_1N=\pi_1S$. Let now $N$ be a manifold diffeomorphic to $S \times \R$, and let $\ms N$ be the space of isotopy classes of GHMC AdS structures on $N$, with the topology induced by uniform $C^\infty$-convergence of developing maps on compact subsets of $\tilde N$.
Mess showed in~\cite{Mes} that the holonomy of every GHMC AdS (2+1)-spacetime is quasi-Fuchsian. Furthermore, it follows from his work that the map $\ms N \rar \mc R$ sending a GHMC AdS structure to its holonomy is a homeomorphism. 
Any spacelike surface $\Sigma \subset \ads^3$ invariant with respect to $\rho \in \mc R$ belongs to the \emph{domain of dependence} $D(\rho) \subset \ads^3$, a convex open $\rho$-invariant set, whose quotient by $\rho$ produces the respective GHMC AdS spacetime.


\subsection{Spacetimes with boundary and spaces of embeddings}
\label{secembed}

Recall that we use GHC AdS (2+1)-spacetimes to mean globally hyperbolic compact anti-de Sitter (2+1)-spacetimes with spacelike convex boundary.
Let $\mc M$ be the space of GHC AdS structures on $M$ with smooth strictly convex boundary. We pick up the viewpoint of locally homogeneous geometry, already mentioned above, and consider elements of $\mc M$ as $G$-conjugacy classes of pairs $(\rho, f)$ of holonomies $\rho: \pi_1M=\pi_1S \rar G$ and of $\rho$-equivariant developing maps $f: \tilde M \rar \ads^3$. We endow it with the topology of uniform $C^\infty$-convergence on compact subsets of $\tilde M$. Let $\ms M$ be the quotient of $\mc M$ by isotopies. A structure is \emph{Fuchsian} if its holonomy is conjugate into $G_F$.

We will, however, choose a different space to focus our consideration on. To this purpose, we will now notice that an element of $\ms M$ is determined by the ``full'' structure of the boundary, that is, by a lift of its two boundary components to equivariantly embedded complete surfaces in $\ads^3$. Our eventual goal in this paper is to show that, in certain cases, only a part of this data, namely the induced metrics, is sufficient to determine $\ms M$.

We say that an immersion of a surface to $\ads^3$ is \emph{future-convex} (resp. \emph{past-convex}) if it is smooth, spacelike, locally convex, and for each point its future cone (resp. its past cone) locally belongs to the convex side. A strictly future-/past-convex immersion is a one that is future-/past-convex and strictly convex.
Let $\hat{\mc E}_+$ be the space of strictly future-convex orientation-preserving immersions $f: \tilde S \rar \ads^3$ equivariant with respect to a representation $\rho: \pi_1S \rar G$. We endow it with the topology of uniform $C^\infty$-convergence on compact subsets of $\tilde S$. As it follows from the work of Mess~\cite{Mes}, any arising representation is quasi-Fuchsian and any such immersion is an embedding. Let $\mc E_+$ be the quotient of $\hat{\mc E}_+$ by $G$. Similarly, let $\hat{\mc E}_-$ and $\mc E_-$ be the spaces of equivariant strictly past-convex orientation-preserving embeddings and of their classes. Denote by $\hat{\mc E}$ the subset of $\hat{\mc E}_+\times\hat{\mc E}_-$ consisting of pairs of embeddings that are equivariant with respect to the same representation, and by $\mc E$ denote its quotient by $G$. Finally, let $\ms E$ be the quotient of $\mc E$ by isotopies (actually, by pairs of isotopies of $S$, as for this quotient we consider pairs of maps $f_+$ and $f_-$, representing $e \in \mc E$, separately).

We claim that $\ms M$ is naturally homeomorphic to $\ms E$. Indeed, from the description of the spaces it follows that there is a natural continuous injective map $\iota: \ms M \rar \ms E$, sending a GHC AdS structure on $M$ with smooth and strictly convex boundary to the (equivalence class under $G$ of the) pair of equivariant embeddings $f_\pm:\tilde S\to \ads^3$ of its boundary components. It is also clear that $\iota$ is surjective. It only remains to check that $\iota^{-1}$ is continuous.

Let $(\rho_i, f_{i, +}, f_{i,-})$ be a sequence in $\hat{\mc E}$ converging to $(\rho, f_+, f_-)$. The convergence of $\rho_i$ implies that $\rho_i$ and $\rho$ determine on $N$ GHMC AdS structures $n_i$ uniformly $C^\infty$-converging on compacts to a GHMC AdS structure $n$. Furthermore, the convergence of $f_{i,+}$ and $f_{i,-}$ implies that the pairs of surfaces in $N$ corresponding to $(f_{i,+}, f_{i,-})$ converge to the ones corresponding to $(f_+, f_-)$. By associating the part of $N$ bounded by these pairs of surfaces with $M$, one can deduce that $(\rho_i, f_{i, +}, f_{i,-})$ determine on $M$ GHC AdS structures that converge to such a structure determined by $(\rho, f_+, f_-)$. 

Hence, we see that $\iota$ is a homeomorphism. From now on we forget about the spaces $\mc M$ and $\ms M$ and work only with $\mc E$ and $\ms E$. 


We denote by $\mc S$ the space of Riemannian metrics on $S$ of curvature $<-1$, and denote by $\ms S$ its quotient by isotopies.
We have the respective induced metric map $\mc I: \mc E \rar \mc S \times \mc S$, descending to a map $\ms E \rar \ms S \times \ms S$, which from now on we will denote by $\ms I$. We call \emph{Fuchsian} elements of $\hat{\mc E}$ whose representation is in $G_F$. This notion descends to elements of $\mc E$ and $\ms E$. We denote by $\hat{\mc E}_F \subset \hat{\mc E}$ the subspace of Fuchsian elements, and denote by $\mc E_F$ its quotient by conjugation. What we will actually prove is

\begin{thm}
\label{main**}
There exists a neighborhood $U$ of $\mc E_F$ in $\mc E$ such that the image of $U$ by $\mc I$ is open, contains the diagonal and $\mc I|_U$ is a homeomorphism onto its image.
\end{thm}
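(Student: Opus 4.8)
The plan is to deduce Theorem~\ref{main**} from an infinitesimal rigidity statement at the Fuchsian locus together with a local implicit-function argument carried out in a Nash--Moser framework. The first step is to set up the analytic picture: fix $s \in \ms S$ and describe the fiber $\mc I^{-1}(s_+,s_-)$ in terms of the spaces $\mc{GC}_\pm(s)$ of Gauss--Codazzi fields, so that a GHC AdS structure with prescribed boundary metrics is equivalent to a solution of the Gauss--Codazzi system on each boundary component, glued across the spacetime by the requirement that the two equivariant embeddings share a holonomy $\rho \in \mc R \cong \ms T \times \ms T$. Thus $\mc I$ is realized as a smooth (tame) map between tame Fr\'echet manifolds, and one wants to apply the Nash--Moser inverse function theorem at a Fuchsian point $e_0 \in \mc E_F$. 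For this one needs: (i) the differential $d\mc I_{e_0}$ is an isomorphism, and (ii) tame estimates for a right inverse of $d\mc I_e$ for $e$ near $e_0$.

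The heart of the matter, and the step I expect to be the main obstacle, is (i) --- the infinitesimal rigidity at the Fuchsian locus, i.e.\ the affirmative answer to Question~\ref{q:infrig} when $\rho$ is Fuchsian. Following the strategy sketched in the ``word on the proofs,'' I would use the infinitesimal Pogorelov map (Section~\ref{ssc:pogorelov}) to translate a nontrivial first-order deformation $\dot m$ of a Fuchsian structure fixing both boundary metrics into a first-order deformation of a globally hyperbolic \emph{Minkowski} spacetime with Fuchsian linear holonomy that fixes the induced metrics on the two boundary surfaces. At the Fuchsian point the two boundary components are (infinitesimally) symmetric images of a convex spacelike surface, and the Pogorelov correspondence sends the AdS Gauss--Codazzi linearization to the Minkowski one while preserving the vanishing of $(\dot m_+, \dot m_-)$. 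One then invokes the infinitesimal rigidity results already available in the Minkowski setting --- Smith~\cite{Smi} in the smooth case (and Fillastre--Prosanov~\cite{FP} for the polyhedral analog, used in Theorem~\ref{mainp*}) --- to conclude $\dot m = 0$ up to the trivial deformations (those coming from global isometries), which gives injectivity of $d\mc I_{e_0}$. Surjectivity of $d\mc I_{e_0}$ follows by a dimension/index count: both $\mc E$ and $\mc S \times \mc S$ have the ``same size'' (each boundary contributes a metric modulo nothing, matched against a convex embedding modulo a finite-dimensional holonomy), so Fredholmness plus injectivity yields bijectivity; alternatively one uses the existence half, Tamburelli~\cite{Tam} near the Fuchsian locus, to see the linearized map is onto.

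With infinitesimal rigidity in hand at every point of $\mc E_F$, the next step is to upgrade to a local homeomorphism. Because $\mc E$ and $\mc S\times\mc S$ are not Banach but Fr\'echet (smooth metrics, smooth embeddings), the classical inverse function theorem does not apply; instead I would verify that $\mc I$ is a smooth tame map and that the right inverse of $d\mc I_e$, which exists for $e$ in a neighborhood of $\mc E_F$ by openness of the invertibility condition, satisfies tame estimates --- these come from elliptic regularity for the linearized Gauss--Codazzi operator (a second-order elliptic system on each boundary surface) combined with the smooth dependence of the holonomy. The Nash--Moser theorem then gives, for each $e_0 \in \mc E_F$, a neighborhood on which $\mc I$ is a homeomorphism onto an open set. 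Finally I would patch these local statements into a single neighborhood $U$ of the whole locus $\mc E_F$: since $\mc I|_{\mc E_F}$ is injective (the Fuchsian case of Question~\ref{q:main}, which is Labourie--Schlenker~\cite{LS}) and a local homeomorphism near each Fuchsian point, a standard argument (using that $\mc E_F$ is, after quotienting, finite-dimensional and that two nearby points with the same image would have to coincide by the local injectivity) produces a uniform neighborhood $U$ on which $\mc I$ is a homeomorphism onto an open image, which is the assertion of Theorem~\ref{main**}. The translation to $\ms M \cong \ms E$ and then to Theorem~\ref{main*} is then immediate by passing to the quotient by isotopies.
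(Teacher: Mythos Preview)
Your overall architecture matches the paper's: infinitesimal rigidity at the Fuchsian locus via the Pogorelov map and Smith's Minkowski result, then a Nash--Moser argument for a local homeomorphism, then Labourie--Schlenker on $\mc E_F$ plus a patching step. Two points deserve correction or sharpening.

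First, the Nash--Moser step. You speak of ``elliptic regularity for the linearized Gauss--Codazzi operator (a second-order elliptic system)'' and of surjectivity following from a ``dimension/index count'' or from Tamburelli's existence theorem. The paper does not linearize Gauss--Codazzi directly; it decomposes $d\mc I$ into a tangent part governed by a first-order $\bar\partial$-type operator $\mc P$ of index $-(12k-12)$ (two copies of $-(6k-6)$), a normal part which is a bundle isomorphism, and a finite-dimensional piece coming from $H^1_\rho(\pi_1S,\mf g)$ of dimension $12k-12$. Injectivity of $d_{e'}\mc I$ plus this index computation is what forces bijectivity of the relevant piece, and Hamilton's theorem on families of elliptic operators then supplies the tame inverses. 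Your ``openness of invertibility'' shortcut is plausible once one works in a fixed Sobolev scale and bootstraps, but the paper instead proves injectivity of $d_{e'}\mc I$ throughout a neighborhood by a separate transversality argument (continuity of the tangent spaces $T_b\mc{GC}(s)$ as $s$ varies), so you should be aware that in Fr\'echet spaces this openness is not for free and needs either that argument or an explicit reduction to Banach spaces.

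Second, and more seriously, your patching step contains a factual error: $\mc E_F$ is \emph{not} finite-dimensional, even after quotienting by isotopies --- it parameterizes pairs of convex equivariant embeddings, not just holonomies. Consequently there is no ``standard argument'' of the kind you invoke. Going from local homeomorphisms around each point of $\mc E_F$ to injectivity on a uniform neighborhood requires genuine input; the paper handles this by showing $\mc E$ and $\mc S\times\mc S$ are metrizable, pulling back a metric from $\mc S_F:=\mc I(\mc E_F)$ to $\mc E_F$ via the Labourie--Schlenker homeomorphism, extending it to all of $\mc E$ by Hausdorff's extension theorem, and then running a triangle-inequality computation to show that two preimages of the same point in the union of small balls must lie in a single ball. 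You should replace your finite-dimensionality claim with this (or an equivalent) argument.
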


Note that the map $\mc I$ is equivariant with respect to isotopies. Thus Theorem~\ref{main**} implies a similar result for the map $\ms I$ and, through our discussion, implies Theorem~\ref{main*}. From now on we will actually also forget about the spaces $\ms E$ and $\ms S$.


\subsection{Reminder on group cohomology}
\label{secgc}

In the next section, we discuss the differentiable structure on our spaces. We will then use basic properties of representation varieties, which we recall here. We briefly describe a construction that we will use several times. Let $\rho: \pi_1S \rar G$ be a representation into a Lie group $G$ and $\mf m$ be a $G$-module (most frequently we use $\mf m=\mf g$, though not only). It becomes a $\pi_1S$-module via the representation $\rho$. We denote by $Z^1_\rho(\mf m)=Z^1_\rho(\pi_1S, \mf m)$ the vector space of (first) $\mf m$-valued \emph{cocycles} of $\pi_1S$, i.e., maps $\tau: \pi_1S \rar \mf m$ for all $\gamma_1, \gamma_2\in \pi_1S$ satisfying
\[\tau(\gamma_1\gamma_2)=\tau(\gamma_1)+\rho(\gamma_1)\tau(\gamma_2).\]
A cocycle is called \emph{coboundary} if for some $x \in \mf m$ and all $\gamma \in \pi_1S$ we have 
\[\tau(\gamma)=\rho(\gamma)x-x.\]
Denote the subspace of coboundaries by $B^1_\rho(\mf m)=B^1_\rho(\pi_1S,\mf m)$ and define the (first) $\mf m$-valued cohomology of $\pi_1S$ as \[H^1_\rho(\mf m)=H^1(\pi_1, \mf m):=Z^1_\rho(\mf m)/B^1_\rho(\mf m).\]

In particular, if $\hat{\mc R}$ is the representation variety of $\pi_1S$ into $G$, it is convenient to perceive its Zariski tangent space at $\rho$ as $Z^1_\rho(\mf g)$, where $\mf g$ is considered as a $G$-module via the adjoint representation. This is a construction going back to Weil~\cite{Wei}. See also the book~\cite{Rag} for details. Then $B^1_\rho(\mf g)$ is the Zariski tangent space to the scheme associated to the $G$-orbit of $\rho$ under conjugation, and $H^1_\rho(\mf g)$ can be interpreted as the Zariski tangent space to the scheme associated to the quotient of $\hat{\mc R}$ by conjugation of $G$. In our cases, the components of the representation variety under our consideration are smooth, the action of $G$ on it is free and proper, and all these can be thought as the standard tangent spaces.

\subsection{Differentiable structures on spaces of embeddings}

As a main reference for Fr\'echet spaces and Fr\'echet manifolds we use the excellent survey~\cite{Ham} of Hamilton.

Consider a smooth path $(\rho_t, f_t) \in \hat{\mc E}_+$ with $(\rho_0, f_0)=(\rho, f)$. As recalled in Section~\ref{secgc}, a tangent vector to  $\rho_t: \pi_1S \rar G$ at $t=0$ can be described by a cocycle $\dot\rho \in Z^1_\rho(\mf g)$, which is a map $\dot\rho: \pi_1S \rar \mf g$. A tangent vector to $f_t$ at $t=0$ is a vector field $\dot f$ defined on $f(\tilde S)$. An easy computation shows that the condition of $\rho_t$-equivariance of $f_t$ translates at the first order into the condition of \emph{automorphicity} with respect to $\dot\rho$:
\begin{equation}
\label{automorph}
\dot f (\rho(\gamma)x) = d\rho(\gamma)(\dot f (x)) + \dot \rho(\gamma)(\rho(\gamma) x)~.
\end{equation}
Here elements of $\mf g$ are interpreted as Killing fields on $\ads^3$. An automorphic vector field is \emph{trivial} if it is the restriction of a Killing field of $\ads^3$. Note that this in particular implies that $\dot\rho$ is a coboundary. 

Pick $(\rho, f) \in \hat{\mc E}_+$. Denote by $\tilde V_f$ the restriction of $T\ads^3$ to $f(\tilde S)$. Denote by $C^\infty_\rho(\tilde V_f)$ the space of vector fields that are automorphic with respect to some $\dot\rho \in Z^1_\rho(\mf g)$. This is a Fr\'echet space, and $\hat{\mc E}_+$ can be considered as a Fr\'echet manifold with charts in $C^\infty_\rho(\tilde V_f)$. In particular, $T_{(\rho, f)}\hat{\mc E}_+ \cong C^\infty_\rho(\tilde V_f)$. There is a natural fibration $\hat\pi_+: \hat{\mc E}_+ \rar \hat{\mc R}$, i.e., a surjective submersion. The group $G$ acts on $\hat{\mc R}$ freely and properly, and so it does on $\hat{\mc E}_+$. The quotient $\mc E_+$ is also naturally a Fr\'echet manifold, and there is a surjective submersion $\pi_+: \mc E_+ \rar \mc R$. The same relates to the spaces $\hat{\mc E}_-$, $\mc E_-$, $\hat{\mc E}$ and $\mc E$. It is also standard that $\mc S$ is a Fr\'echet manifold. 

\subsection{Gauss-Codazzi fields on surfaces}

Pick $s \in \mc S$. Thanks to the fundamental theorem of surfaces, the space of its isometric embeddings to $\ads^3$ has a nice parametrization in terms of \emph{GC-fields}.
Denote the curvature of $s$ by $\kappa: S \rar \R$ and its Levi-Civita connection by $\nabla$. A \emph{GC-field} for $s$ is an $s$-symmetric operator field $b: TS \rar TS$ satisfying the Gauss--Codazzi equations: \\
(Gauss) $\det(b)=-\kappa-1$; \\
(Codazzi) $d^\nabla b=0$. 

Denote by $\mc{GC}(s) \subset C^\infty(T^1_1S)$ the space of GC-fields on $(S, s)$. It is known from~\cite[Lemma 3.2]{Tam} of Tamburelli that it is a finite-dimensional submanifold of dimension $6k-6$. Denote by $\mc{GC}_+(s)$ and $\mc{GC}_-(s)$ the subspaces of those $GC$-fields that have positive (resp. negative) eigenvalues. The fundamental theorem of surfaces (see, e.g.,~\cite[Section 7]{BGM}) allows to define two maps
\[\psi_{s, \pm}: \mc{GC}_\pm(s) \rar \mc E_\pm~\]
so that a GC-field is the shape operator of the respective embedding. An examination of the proof of the fundamental theorem allows to ensure that 

\begin{thm}
\label{fts}
$\psi_{s, \pm}$ are $C^1$-immersions. 
\end{thm}

We will benefit from employing the dual viewpoint on the tangent vectors to $\mc {GC}_\pm(s)$: as tensor fields and as automorphic vector fields.


For a given $s \in \mc S$ with the almost-complex structure $j$ and a GC-field $b$, consider the following two metrics 
\begin{equation}
\label{ks}
s(({\rm id}+jb)\cdot,({\rm id}+jb)\cdot),~~~s(({\rm id}-jb)\cdot,({\rm id}-jb)\cdot)),
\end{equation}
where ${\rm id}$ is the identity operator.
It was shown by Krasnov--Schlenker~\cite{KS} that those are hyperbolic metrics on $S$. Furthermore, they represent the left and the right metrics of the corresponding quasi-Fuchsian representation. By taking the isotopy classes, we obtain two smooth maps
\[\phi_{s,\pm}: \mc{GC}_\pm(s) \rar \ms T \times \ms T\cong \mc R\]
\[b \mapsto \Big(\big[s(({\rm id}+jb)\cdot,({\rm id}+jb)\cdot)\big],\big[s(({\rm id}-jb)\cdot,({\rm id}-jb)\cdot))\big]\Big).\]
Note that $\phi_{s, \pm}$ coincide with $\pi_\pm \circ \psi_{s, \pm}$. It was shown by Chen--Schlenker~\cite{CS} that

\begin{thm}
\label{CS}
For every $s \in \mc S$ the compositions of $\phi_{s, \pm}$ with the projection to any of the factors in $\ms T \times \ms T$ is a diffeomorphism.
\end{thm}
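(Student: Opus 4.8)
The plan is to recognize Theorem~\ref{CS} as a reformulation, in the language of GC-fields, of the resolution by Chen--Schlenker~\cite{CS} of the problem of isometrically embedding a single metric into a GHMC AdS spacetime with prescribed left (or right) metric. First I would unwind the definitions. For $b \in \mc{GC}_+(s)$, the embedding $\psi_{s,+}(b) \in \mc E_+$ is, by the fundamental theorem of surfaces, the equivariant strictly future-convex embedding of $(S,s)$ into $\ads^3$ with shape operator $b$; its holonomy $\rho = \pi_+\big(\psi_{s,+}(b)\big) \in \mc R$ is quasi-Fuchsian, and the quotient of the domain of dependence $D(\rho)$ is the GHMC AdS spacetime $N_\rho$ into which $(S,s)$ embeds future-convexly with shape operator $b$. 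By the Krasnov--Schlenker computation~\cite{KS} recalled in~\eqref{ks}, the left and right hyperbolic metrics of $N_\rho$ are precisely the two entries of $\phi_{s,+}(b)$. Hence, denoting by $p_1, p_2$ the two projections $\ms T \times \ms T \to \ms T$, the map $p_1 \circ \phi_{s,+} \colon \mc{GC}_+(s) \to \ms T$ sends $b$ to the left metric of the GHMC AdS spacetime carrying this future-convex copy of $(S,s)$, and $p_2 \circ \phi_{s,+}$ sends it to the right metric.

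Next I would establish bijectivity. Fixing $h \in \ms T$, the fibre $(p_1 \circ \phi_{s,+})^{-1}(h)$ is in natural bijection with the set of equivariant strictly future-convex embeddings of $(S,s)$, up to isometry, into the GHMC AdS spacetime whose left metric is $h$; that this set is a single point for every $h$ is exactly the conclusion of~\cite{CS} in the smooth case. The remaining three maps reduce to the same statement: prescribing the right metric instead of the left is obtained from~\cite{CS} by composing with the isometry of $\ads^3$ swapping the two $\psl(2,\R)$-factors in~\eqref{g}, and replacing future- by past-convexity is obtained by the time-reversing isometry of $\ads^3$, which also interchanges $\mc{GC}_+(s)$ with $\mc{GC}_-(s)$. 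Finally, to upgrade these continuous bijections to diffeomorphisms I would use that source and target are manifolds of the same dimension $6k-6$ — $\ms T$ by classical Teichm\"uller theory, and $\mc{GC}_\pm(s)$ as an open subset of the $(6k-6)$-dimensional manifold $\mc{GC}(s)$ of~\cite[Lemma 3.2]{Tam} — and that $p_i \circ \phi_{s,\pm}$ is $C^1$ since $\psi_{s,\pm}$ is a $C^1$-immersion and $\pi_\pm$ a submersion; invoking the infinitesimal form of~\cite{CS} (injectivity of the differential of the prescribed-metric correspondence), the differential $d(p_i \circ \phi_{s,\pm})$ is injective, hence by the dimension count a linear isomorphism, so $p_i \circ \phi_{s,\pm}$ is a local, and by bijectivity a global, $C^1$-diffeomorphism (with higher regularity, if desired, inherited from~\cite{CS}).

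The main obstacle is not analytic but one of bookkeeping: the genuine content is Chen--Schlenker's theorem, used here as a black box, so the work consists in matching conventions — that ``left metric'' corresponds to the first entry of~\eqref{ks}, that future-convexity is the correct sign condition on $b$, and that the three reductions are indeed the expected symmetries of $\ads^3$ — together with extracting from~\cite{CS} the precise regularity (injectivity of the differential, or smoothness of the parametrization) needed to conclude ``diffeomorphism'' rather than merely ``homeomorphism''.
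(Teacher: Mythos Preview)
Your proposal is correct and matches the paper's treatment: the paper does not give its own proof of Theorem~\ref{CS} but simply attributes it to Chen--Schlenker~\cite{CS}, and your argument is precisely the unpacking of definitions needed to see that the statement in terms of $\phi_{s,\pm}$ is a reformulation of their result. The bookkeeping you flag (matching left/right with the two entries of~\eqref{ks}, and the symmetry reductions) is indeed the only content beyond the citation.
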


We will need from this a weaker fact that both $\phi_{s, \pm}$ are immersions. Curiously, we do not know a proof of this not relying on Theorem~\ref{CS}.
We will also rely on the work of Labourie--Schlenker~\cite{LS}, which implies

\begin{thm}
\label{LS}
The restriction $\mc I|_{\mc E_F}$ is a homeomorphism onto the image.
\end{thm}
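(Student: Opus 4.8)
The plan is to deduce the statement from the main realization and rigidity result of Labourie--Schlenker~\cite{LS} for Fuchsian AdS spacetimes. Recall that $\mc E_F$ consists of pairs $(f_+, f_-)$ of equivariant strictly future-/past-convex embeddings of $\tilde S$ into $\ads^3$ with a common \emph{Fuchsian} holonomy $\rho \in \mc R_F$; up to conjugation such a $\rho$ preserves a totally geodesic plane $P_\rho \cong \H^2$, and the two embedded surfaces lie respectively in the future and the past of $P_\rho$. The first step is therefore to recall that the datum of $e = (f_+, f_-) \in \mc E_F$ is equivalent to the datum of a Fuchsian GHC AdS structure on $M$ containing an embedded totally geodesic Cauchy surface isotopic to $S \times \{0\}$; this is precisely the geometric situation studied in~\cite{LS}, where Labourie and Schlenker prove that such a structure exists and is unique for every prescribed pair of induced boundary metrics with curvature $<-1$. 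In the language of the present paper, this says exactly that $\mc I|_{\mc E_F} : \mc E_F \to \mc S \times \mc S$ is a continuous bijection.

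Next I would promote this bijection to a homeomorphism onto its image. Continuity of $\mc I|_{\mc E_F}$ is immediate from the definition of the $C^\infty$-topologies and the fact that the induced metric depends continuously (indeed smoothly) on an equivariant embedding. For the inverse, the standard approach is a properness/exhaustion argument: one shows that $\mc I|_{\mc E_F}$ is a proper map, i.e., that if $\mc I(e_i) \to (s_+, s_-)$ in $\mc S \times \mc S$ then, after passing to a subsequence, $e_i$ converges in $\mc E_F$. This uses a priori compactness estimates for strictly convex spacelike surfaces in $\ads^3$ with induced metric in a compact subset of $\mc S$: one controls the holonomy (staying in a compact part of $\mc R_F \cong \ms T$ because the induced metric, having curvature bounded away from $-1$ by the Gauss equation and fixed area $2\pi|\chi(S)|$ rescaled, cannot degenerate), then controls the embeddings themselves via the maximum principle / barrier arguments bounding the second fundamental form, exactly as in the compactness parts of~\cite{LS} or~\cite{Tam}. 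A proper continuous bijection between (metrizable, locally compact enough) spaces is a homeomorphism, which gives the claim.

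Alternatively, and perhaps more in the spirit of what the rest of the paper needs, one can argue purely formally once one knows (a) $\mc I|_{\mc E_F}$ is a continuous bijection by~\cite{LS}, and (b) both spaces are manifolds of the same finite dimension $6k-6$ with $\mc I|_{\mc E_F}$ a local homeomorphism near every point. Point (b) would follow from the fact that $\mc E_F$ is cut out as pairs $(\psi_{h,+}(b_+), \psi_{h,-}(b_-))$ with matching Fuchsian holonomy, together with Theorem~\ref{CS} (or just the immersivity of $\phi_{s,\pm}$), which controls the differential of $\mc I$ along the Fuchsian locus; then Brouwer's invariance of domain upgrades the continuous bijection to a homeomorphism. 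Either route is acceptable; I would present the properness argument as the primary one since it does not require re-deriving the infinitesimal picture.

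The main obstacle is extracting from~\cite{LS} the statement in exactly the form needed here: Labourie--Schlenker work with a fixed Fuchsian group and prescribe metrics on the two boundary components of a compact AdS manifold with totally geodesic ``middle'' slice, whereas here the holonomy $\rho$ is allowed to vary over $\mc R_F$ and one must track how the pair of embeddings varies continuously with $\rho$ and with the boundary metrics simultaneously. Concretely, one must check that the bijection in~\cite{LS} is \emph{uniform} enough in $\rho$ to assemble into a global statement about $\mc E_F \to \mc S \times \mc S$, and that the compactness estimates degrade continuously as $\rho$ ranges over a compact subset of moduli space; this is where the bulk of the (routine but non-trivial) work lies, and it is essentially the reason the present paper can only push this to a \emph{neighborhood} of $\mc E_F$ rather than globally.
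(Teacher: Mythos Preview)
The paper does not give a proof of Theorem~\ref{LS}; it simply records it as a consequence of Labourie--Schlenker~\cite{LS} and uses it as a black box later in Section~\ref{ssc:main**}. So your overall strategy---reduce to~\cite{LS}---is exactly what the paper does; you have just tried to spell out the reduction.

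That said, parts of your sketch are wrong. In your alternative route (b) you assert that ``both spaces are manifolds of the same finite dimension $6k-6$'' and invoke Brouwer invariance of domain. This is false: $\mc E_F$ and $\mc S\times\mc S$ are infinite-dimensional Fr\'echet manifolds (this is precisely why Section~\ref{ssc:homeo} needs Nash--Moser), so invariance of domain is unavailable and route (b) does not work as written. Your primary route via a compactness/properness argument is the correct one and is essentially how~\cite{LS} proceeds; but note that ``a proper continuous bijection between metrizable, locally compact enough spaces is a homeomorphism'' is not applicable either, since these spaces are not locally compact. What actually works is the sequential statement you wrote (if $\mc I(e_i)\to(s_+,s_-)$ then $e_i$ subconverges), which, together with injectivity and metrizability, gives continuity of the inverse directly. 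Also, the result of~\cite{LS} concerns a \emph{single} equivariant Fuchsian convex surface, not pairs; this is why the theorem is stated as ``onto the image'' rather than onto $\mc S\times\mc S$, and your first paragraph overstates what~\cite{LS} provides.

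Finally, your closing paragraph misdiagnoses the paper. The reason the main theorem is only established near $\mc E_F$ has nothing to do with making the estimates of~\cite{LS} uniform in $\rho$; it is that the infinitesimal rigidity Lemma~\ref{infrig} is proved only at Fuchsian points, because the Pogorelov-map transfer to Minkowski space in Section~\ref{ssc:pogorelov} requires the holonomy to lie in $G_F$.
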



\section{Infinitesimal rigidity}
\label{sc:rigidity}

This section is devoted to a proof of

\begin{prop}
\label{infrig}
Let $e \in \mc E_F$. Then $d_e\mc I$ is injective.
\end{prop}

To this goal, in the next subsection we deal first with equivariant convex surfaces in Minkowski 3-space $\R^{2,1}$. We deduce from the work of Smith~\cite{Smi} that a pair of strictly future-convex and past-convex surfaces in $\R^{2,1}$, equivariant with respect to the same quasi-Fuchsian representation, is rigid with respect to the simultaneous automorphic infinitesimal isometric deformations. After that, in the next subsection we introduce the infinitesimal Pogorelov map from anti-de Sitter 3-space to Minkowski 3-space, and conclude Proposition~\ref{infrig} from the corresponding Minkowski result.

\subsection{Equivariant surfaces in Minkowski 3-space}
\label{secmink}

Let $\R^{2,1}$ be Minkowski 3-space with the origin $o$. Denote the identity component of its isometry group by $G^0$. We consider here $G_F$ as a subgroup of $G^0$ stabilizing $o$. We have 
\begin{equation}
\label{g^0}
G^0\cong G_F \ltimes \R^{2,1}.
\end{equation}

As previously, we call a representation $\rho: \pi_1S \rar G^0$ \emph{quasi-Fuchsian} if it is a deformation of a Fuchsian representation $\pi_1S \rar G_F$. They form a component of the representation variety. We denote by $\mc R^0$ the quotient of this component by conjugation of $G^0$, while we denote the component itself by $\hat{\mc R}^0$. They and the associated spacetimes were actively studied also by Mess~\cite{Mes}, as well as by Barbot, Bonsante, Seppi and others~\cite{Bar, Bon, BBZ, BS}.

For $\rho \in \hat {\mc R}^0$, its linear part, i.e., the projection to the $G_F$-factor in~(\ref{g^0}), is a Fuchsian representation $\rho_F \in \hat{\mc R}_F$. It is twisted by a cocycle $\tau \in Z^1_{\rho_F}(\R^{2,1})$, where $\R^{2,1}$ is considered as a $G_F$-module via the standard action by isometries. This means that for $\gamma \in \pi_1S$ and $x \in \R^{2,1}$ we have
\[\rho(\gamma)x=\rho_F(\gamma)x+\tau(\gamma).\]
The space $\hat{\mc R}^0$ is a (trivial) vector bundle over $\hat{\mc R}_F$ with the fiber at $\rho_F \in \hat{\mc R}_F$ identified to $Z^1_{\rho_F}(\R^{2,1})$. The conjugacy action of $G^0$, in particular, contains an action by coboundaries on each fiber, and $\mc R^0$ is a (trivial) vector bundle over $\mc R_F$ with the fiber at $\rho_F \in {\mc R}_F$ identified to $H^1_{\rho_F}(\R^{2,1})$.

The Minkowski cross-product allows to construct a canonical isomorphism between $\R^{2,1}$ and $\mf g_F$ as $G_F$-modules. See, e.g.,~\cite[Section 3]{FS} of Fillastre--Seppi. Thereby, we have identification $H^1_{\rho_F}(\mf g_F)\cong H^1_{\rho_F}(\R^{2,1})$. Furthermore, we have 
\begin{equation}
\label{rep0}
\mc R^0\cong T\mc R_F \cong T \ms T.
\end{equation}

We can repeat some definitions from Section~\ref{secembed} for immersions of surfaces into Minkowski 3-space.
Let $s$ be a negatively curved metric on $S$ and $\kappa: S \rar \R$ be its curvature. Denote by $\mc{GC}^0_+(s)$ and $\mc{GC}^0_-(s)$ the spaces of GC-fields on $(S, s)$ that have both eigenvalues of the respective sign. (For Minkowski 3-space the Gauss equation is $\det(b)=-\kappa$.) Similarly to the anti-de Sitter situation, they are endowed with topologies of smooth manifolds of dimension $6k-6$. The fundamental theorem of surfaces allows to interpret each element of $\mc{GC}^0_\pm(s)$ as a spacelike equivariant surface in $\R^{2,1}$. By considering the respective representation, we obtain two smooth maps 
\[\phi_{s,\pm}^0: \mc{GC}^0_\pm(s) \rar T\ms T.\]
Denote by $\chi_{s,\pm}^0: \mc{GC}^0_\pm(s) \rar \ms T$ the compositions of $\phi_{s,\pm}^0$ with the projection to $\ms T$.
For $b \in \mc{GC}^0_\pm(s)$, the element $\chi_{s, \pm}^0(b)$ is given by the isotopy class of the metric $s(b\cdot, b\cdot)$, see, e.g.,~\cite{Smi}.


Note that in contrast to the anti-de Sitter situation, a pair of a future-convex and a past-convex surfaces in Minkowski 3-space, equivariant with respect to the same representation, never bound together a subset, on which the action of the representation is free and properly discontinuous. A quasi-Fuchsian representation into $G^0$ has two disconnected maximal convex domains of discontinuity in $\R^{2,1}$, one is future-convex and one is past-convex.

The following result is a corollary of the work of Trapani--Valli~\cite{TV}, as was noticed by Smith in~\cite{Smi}.

\begin{thm}
\label{TV}
For any negatively curved metric $s$, $\chi_{s, \pm}^0$ are diffeomorphisms.
\end{thm}

In particular, the maps $\phi^0_{s, \pm}$ are immersions. For $\sigma \in \ms T$ define 
\[x_s(\sigma):=\phi_{s,+}^0\circ (\chi_{s, +}^0)^{-1}(\sigma).\]
This gives us a vector field $x_s$ on $\ms T$ defined by $s$. If $f: \tilde S \rar \R^{2,1}$ is a $\rho$-equivariant future-convex isometric embedding for a quasi-Fuchsian $\rho$ with cocycle $\tau$, then $-f$ is a past-convex isometric embedding equivariant with respect to a quasi-Fuchsian representation with the same linear part and $-\tau$ as the cocycle. Hence, 
\[-x_s(\sigma)=\phi_{s,-}^0\circ (\chi_{s,-}^0)^{-1}(\sigma).\]

There is another related vector field over $\ms T$ that we associate to $s$. 
To proceed, we need to recall that due to the uniformization theorem, the Teichm\"uller space $\ms T$ can be also considered as the space of isotopy classes of conformal structures. See, e.g.,~\cite{Tro} for details. 
Let $\mc S^*$ be the space of all Riemannian metrics on $S$. For $\sigma \in \mc S^*$ we denote by $\mc O_\sigma$ the space of $\sigma$-symmetric operator fields on $S$. Any $\sigma \in \mc S^*$ determines an isomorphism between the subspace of positive-definite elements of $\mc O_\sigma$ and $\mc S^*$ by $a \mapsto \sigma(a.,.)$. Denote the differential at the identity of this map by $\zeta_\sigma:\mc O_\sigma \rar T_\sigma \mc S^*$, which is also an isomorphism. By taking the conformal class of a metric, we get the projection $\mc S^* \rar \ms T$. We compose the differential of this projection at $\sigma$ with $\zeta_\sigma$ and obtain a map
\[\eta_\sigma: \mc O_\sigma \rar T_\sigma\ms T.\]
Now define another vector field $x_s'$ on $\ms T$ by
\[x_s'(\sigma):=\eta_\sigma \circ {\rm inv} \circ (\chi_{s,+}^0)^{-1}(\sigma),\]
where ${\rm inv}$ is the operation of taking the inverse on $C^\infty(T^1_1 S)$, i.e., ${\rm inv}(b)=b^{-1}$.

Recall that there is a canonical complex structure on $\ms T$, which we denote by $\ms J$. Bonsante--Seppi showed that $\ms Jx'_s=x_s$ \cite[Theorem B]{BS}.

%
%

A Riemannian metric $\sigma$ on $S$ determines an $L^2$-scalar product on all tensor fields over $S$. For operator fields $a_1, a_2$, which are $(1,1)$-tensor fields, it is expressed as
\[\la a_1,a_2 \ra_{L^2}=\int_S {\rm tr}(a_1a_2)d{\rm area}_\sigma.\]
The element $\zeta_\sigma(a)$ is orthogonal to the tangent space to the isotopy orbit of $\sigma$ if and only if $a$ is Codazzi for $\sigma$. See, e.g.,~\cite[Section 3.2]{Yam}. It is also easy to notice that $\zeta_\sigma(a)$ is orthogonal to the tangent space to the conformal orbit of $\sigma$ if and only if $a$ is traceless. We arrive to the well-known fact that that the restriction of $\eta_\sigma$ to the subspace of $\sigma$-symmetric traceless Codazzi operator fields is an isomorphism. When $\sigma$ is the hyperbolic representative of the conformal class, the $L^2$-scalar product produces the Weil--Petersson scalar product on the Teichm\"uller space, up to a universal constant depending on a convention. It also follows from the discussion that if $x_1, x_2 \in T_{\sigma}\ms T$ and $a_1 \in \eta_\sigma^{-1}(x_1)$, $a_2\in \eta_\sigma^{-1}(x_2)$ are such that $a_1$ and $a_2$ are Codazzi and $a_2$ is traceless, then
\[\la x_1, x_2 \ra_{WP}=\int_S{\rm tr}(a_1a_2)d{\rm area}_\sigma.\]

Now return to an equivariant isometric immersion $(\rho, f)$ with induced metric $s$ and shape operator $b$. Let $\rho_F$ be the linear part of $\rho$ and $\sigma$ be the hyperbolic metric on $S$ given by $\H^2 / \rho_F$ and pulled to $S$ by the composition of the Gauss map from $f(\tilde S)$ and of $f$. Define $a:=b^{-1}$, i.e., $a={\rm inv}\circ(\chi_{s,+}^0)^{-1}(\sigma)$. Then $s=\sigma(a\cdot, a\cdot)$ and $a$ is Codazzi for $\sigma$, see, e.g.,~\cite[Proposition 2.7]{BS} of Bonsante--Seppi. Define
\[F(s, \sigma):=\int_S {\rm tr}(a)d{\rm area}_{\sigma}.\]
Due to Theorem~\ref{TV}, $F$ is indeed a well-defined function of a negatively curved metric $s$ and a hyperbolic metric $\sigma$. Furthermore, it depends only on the isotopy classes of $s$, $\sigma$. For fixed $s$, consider $F(s, \cdot)$ as a function $F_s$ over the Teichm\"uller space $\ms T$. It was shown by Smith in~\cite[Lemma 5.3]{Smi} that

\begin{lm}
Pick $x\in T_{\sigma}\ms T$. If $a'\in \eta_\sigma^{-1}(x)$ is traceless and Codazzi, then
\[\dot F_s=-\int_S {\rm tr}(aa')d\area_{\sigma}.\] 
\end{lm}

\begin{crl}
The vector field $-x'_s$ is the Weil--Petersson gradient field of $F_s$ and $-x_s$ is the symplectic Weil--Petersson gradient field of $F_s$.
\end{crl}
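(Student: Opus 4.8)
The plan is to read off both assertions from the preceding Lemma together with the $L^2$/Weil--Petersson dictionary for traceless symmetric Codazzi fields, and then to rotate by the complex structure $\ms J$ to pass from the Riemannian gradient to the symplectic one.

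First I would recall precisely what $x'_s$ is as a tangent vector: at a point $[\sigma]\in\ms T$, by construction $x'_s$ is the image of the operator $a=b^{-1}$ under the fixed identification of $\sigma$-symmetric operator fields with $T_\sigma\ms T$ (the composition $a\mapsto\sigma(a\cdot,\cdot)\mapsto$ its conformal class), where $b$ is the shape operator of the $\rho$-equivariant future-convex isometric embedding of $(S,s)$ whose linear part yields $\sigma$, and where $a$ is $\sigma$-symmetric and Codazzi for $\sigma$ as already noted. Now take an arbitrary $x\in T_\sigma\ms T$ and let $a'$ be its unique traceless $\sigma$-symmetric Codazzi representative. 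By the displayed Weil--Petersson pairing formula --- which only requires the second operator to be traceless --- we get
\[
\la x'_s, x\ra_{WP}=\int_S\mathrm{tr}(aa')\,d\area_\sigma .
\]
The preceding Lemma asserts exactly that the derivative of $F_s$ at $[\sigma]$ in the direction $x$ equals $-\int_S\mathrm{tr}(aa')\,d\area_\sigma$. Hence $d_\sigma F_s(x)=-\la x'_s,x\ra_{WP}$ for every $x$, which is precisely the statement that $-x'_s$ is the Weil--Petersson gradient field of $F_s$.

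For the second assertion, recall that with the Weil--Petersson Kähler structure on $\ms T$ the symplectic (Hamiltonian) gradient of a function is $\ms J$ applied to its Riemannian gradient, up to the universal sign fixed by the orientation convention on the Weil--Petersson symplectic form. Hence the symplectic Weil--Petersson gradient of $F_s$ equals $\ms J(-x'_s)=-\ms J x'_s$, which is $-x_s$ by the identity $\ms J x'_s=x_s$ of Bonsante--Seppi \cite[Theorem B]{BS}. I do not expect a genuine obstacle here: the only point requiring care is that $x'_s$ is represented by $a$, which is symmetric and Codazzi but \emph{not} traceless, so one must use the pairing formula in the asymmetric form above; the remainder is bookkeeping with the sign and orientation conventions already fixed for $\ms J$, the Weil--Petersson form, and the symplectic gradient.
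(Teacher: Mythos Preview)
Your argument is correct and is exactly the derivation the paper intends: the corollary is stated without proof precisely because it follows by combining the preceding Lemma with the displayed Weil--Petersson pairing formula (noting, as you do, that only $a'$ needs to be traceless) and then applying the Bonsante--Seppi identity $\ms J x'_s=x_s$ to pass to the symplectic gradient.
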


Furthermore, Smith proved~\cite[Lemma 5.6]{Smi}

\begin{lm}
$F_s$ is strictly convex with respect to the Weil--Petersson metric.
\end{lm}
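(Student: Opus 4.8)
The plan is to prove that the Hessian of $F_s$ with respect to the Weil--Petersson metric $\la\cdot,\cdot\ra_{WP}$ on $\ms T$ is positive-definite at every point. By the Corollary above, ${\rm grad}_{WP}F_s=-x'_s$, so for $\dot\sigma\in T_\sigma\ms T$ one has ${\rm Hess}_\sigma F_s(\dot\sigma,\dot\sigma)=-\la\nabla^{WP}_{\dot\sigma}x'_s,\dot\sigma\ra_{WP}$, and it suffices to show this quantity is strictly positive for $\dot\sigma\neq 0$. It is convenient to record first a clean formula for $F_s$: if $b=b(s,\sigma)\in\mc{GC}^0_+(s)$ is the unique $GC$-field realizing $\sigma$ (Theorem~\ref{TV}) and $a=b^{-1}$, then in two dimensions $d\area_\sigma=(\det b)\,d\area_s=(-\kappa_s)\,d\area_s$ and ${\rm tr}(b^{-1})\det b={\rm tr}(b)$, so
\[F_s(\sigma)=\int_S{\rm tr}(a)\,d\area_\sigma=\int_S{\rm tr}(b)\,d\area_s,\]
with the area form $d\area_s$ now \emph{fixed}.

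Given this, I would compute the Hessian by differentiating twice along a Weil--Petersson geodesic $\sigma_t$ through $\sigma$, so that ${\rm Hess}_\sigma F_s(\dot\sigma_0,\dot\sigma_0)=\int_S{\rm tr}(\ddot b_0)\,d\area_s$, where $b_t:=b(s,\sigma_t)$. The crucial structural input is that $s$, hence $\kappa_s$ and $\nabla_s$, is held fixed, so the defining equations $d^{\nabla_s}b_t=0$ and $\det b_t=-\kappa_s$ linearize to: $\dot b_0$ and $\ddot b_0$ are $s$-symmetric and Codazzi for $s$ (i.e. $d^{\nabla_s}\dot b_0=d^{\nabla_s}\ddot b_0=0$); ${\rm tr}(b^{-1}\dot b_0)=0$; and, differentiating $\det b_t=-\kappa_s$ twice, ${\rm tr}(b^{-1}\ddot b_0)={\rm tr}\big((b^{-1}\dot b_0)^2\big)$ pointwise. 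Since $b>0$ --- and this is exactly where strict convexity of the surface is used --- the operator $b^{-1}\dot b_0$ is conjugate to the $s$-symmetric operator $b^{-1/2}\dot b_0 b^{-1/2}$, hence ${\rm tr}\big((b^{-1}\dot b_0)^2\big)\geq 0$, with equality only when $\dot b_0=0$. Using the two-dimensional Cayley--Hamilton relation ${\rm Id}={\rm tr}(b)\,b^{-1}+\kappa_s\,b^{-2}$ to expand ${\rm tr}(\ddot b_0)={\rm tr}(b)\,{\rm tr}(b^{-1}\ddot b_0)+\kappa_s\,{\rm tr}(b^{-2}\ddot b_0)$, we arrive at
\[{\rm Hess}_\sigma F_s(\dot\sigma_0,\dot\sigma_0)=\int_S{\rm tr}(b)\,{\rm tr}\big((b^{-1}\dot b_0)^2\big)\,d\area_s+\int_S\kappa_s\,{\rm tr}(b^{-2}\ddot b_0)\,d\area_s,\]
whose first summand is non-negative and vanishes only when $\dot b_0=0$.

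The main obstacle is the second summand: $\kappa_s<0$ and ${\rm tr}(b^{-2}\ddot b_0)$ is not pointwise sign-definite, so one must use the Codazzi constraint $d^{\nabla_s}\ddot b_0=0$ to integrate by parts and trade the dependence on the second-order datum $\ddot b_0$ for a manifestly non-negative quadratic expression in $\dot b_0$ and its $\nabla_s$-covariant derivative (on a surface a Codazzi tensor is determined, up to a finite-dimensional ambiguity, by its trace, which here is prescribed in terms of $\dot b_0$ via ${\rm tr}(b^{-1}\ddot b_0)={\rm tr}((b^{-1}\dot b_0)^2)$). The goal of this step is to reach an identity ${\rm Hess}_\sigma F_s(\dot\sigma_0,\dot\sigma_0)=\int_S\big(\text{non-negative in }\dot b_0,\nabla_s\dot b_0\big)\,d\area_s$ that vanishes only if $\dot b_0=0$, which by Theorem~\ref{TV} forces $\dot\sigma_0=0$. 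If carrying out this integration by parts directly in terms of $b$ and $\dot b_0$ turns out to be awkward, an alternative is to differentiate the identity ${\rm grad}_{WP}F_s=-x'_s$ without the geodesic reduction and use the Bonsante--Seppi relation $\ms Jx'_s=x_s$ from \cite[Theorem~B]{BS}, together with the Corollary that $-x_s$ is the symplectic Weil--Petersson gradient of $F_s$, to rewrite derivatives of $x'_s$ in terms of derivatives of $x_s$ and the complex structure $\ms J$. In either approach the heart of the matter is the single integration by parts that converts the $\ddot b_0$-term into something positive-definite.
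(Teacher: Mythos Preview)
The paper does not prove this lemma; it simply attributes it to Smith \cite[Lemma~5.6]{Smi}. Your attempt, by contrast, tries to give a direct computation, and it has a genuine gap.

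You correctly reduce $F_s$ to $\int_S{\rm tr}(b)\,d\area_s$ and arrive at
\[
{\rm Hess}_\sigma F_s(\dot\sigma_0,\dot\sigma_0)=\int_S{\rm tr}(b)\,{\rm tr}\big((b^{-1}\dot b_0)^2\big)\,d\area_s+\int_S\kappa_s\,{\rm tr}(b^{-2}\ddot b_0)\,d\area_s,
\]
but the second summand is never controlled; everything after that point is a plan, not an argument. There are two concrete obstacles to the plan. First, the Codazzi constraint $d^{\nabla_s}\ddot b_0=0$ together with the scalar relation ${\rm tr}(b^{-1}\ddot b_0)={\rm tr}\big((b^{-1}\dot b_0)^2\big)$ does \emph{not} determine $\ddot b_0$: your parenthetical about a Codazzi tensor being determined up to finite ambiguity by its trace refers to ${\rm tr}_s$, which you do not know. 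Second, and more fundamentally, $\ddot b_0$ is not fixed by the Gauss--Codazzi system alone; it also encodes the Weil--Petersson geodesic equation for $\sigma_t$, transported through the diffeomorphism $\ms T\to\mc{GC}^0_+(s)$ of Theorem~\ref{TV}. You never compute this, and without it there is no way to express $\ddot b_0$ in terms of $b$, $\dot b_0$ and $\nabla_s\dot b_0$, let alone to check that the resulting integrand is sign-definite. The ``alternative'' via $\ms Jx'_s=x_s$ at the end is equally schematic: it names ingredients but performs no computation.

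In short, the missing idea is precisely the one you flag as ``the main obstacle'': turning the $\ddot b_0$-term into something manifestly non-negative. Until that step is actually carried out, this is an outline, not a proof.
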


From this we can deduce

\begin{crl}
\label{transverse}
For any two negatively curved metrics $s_+$ and $s_-$, the vector fields $x_{s_+}'$ and $-x_{s_-}'$ intersect transversely at every intersection point as submanifolds of $T\ms T$. The same holds for $x_{s_+}$ and $-x_{s_-}$.
\end{crl}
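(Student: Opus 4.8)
The plan is to translate the transversality statement into a question about Hessians of a strictly convex function and then read it off from positive-definiteness. By the corollary just proved, $-x'_s$ is the Weil--Petersson gradient of $F_s$; write $H_s := \operatorname{Hess}_{WP} F_s$, regarded via the Weil--Petersson metric as a self-adjoint endomorphism field of $\ms T$. Strict convexity of $F_s$ (the preceding lemma of Smith) says precisely that $H_s$ is positive definite at every point of $\ms T$. Since $\dim_\R \ms T = 6k-6$, the graph of each of the fields $x'_{s_\pm}$ and $x_{s_\pm}$ is a $(6k-6)$-dimensional submanifold of the $(12k-12)$-dimensional manifold $T\ms T$, so two such graphs have complementary dimension and transversality at a common point is equivalent to their tangent spaces there being in direct sum. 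To compute those tangent spaces I would fix the Levi--Civita connection $\nabla^{WP}$ of the Weil--Petersson metric and use it to split $T_{(p,v)}(T\ms T) \cong T_p\ms T \oplus T_p\ms T$ into horizontal and vertical parts; with respect to this splitting the tangent space at $(p, X(p))$ to the graph of a vector field $X$ is $\{(w, \nabla^{WP}_w X) : w \in T_p\ms T\}$, and for $X = \pm\,\operatorname{grad}_{WP} g$ the vertical component is $\pm H_g(w)$, where $H_g$ is the Hessian endomorphism of $g$ at $p$ (self-adjoint by torsion-freeness of $\nabla^{WP}$, and a well-defined operator regardless of whether $p$ is critical for $g$).

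Next I would treat $x'_{s_+}$ and $-x'_{s_-}$. Since $x'_{s_+} = -\operatorname{grad}_{WP} F_{s_+}$ and $-x'_{s_-} = \operatorname{grad}_{WP} F_{s_-}$, the two graphs meet at a point $(p,v) \in T\ms T$ exactly when $v = -\operatorname{grad}_{WP} F_{s_+}(p) = \operatorname{grad}_{WP} F_{s_-}(p)$, i.e.\ when $p$ is a critical point of $F_{s_+}+F_{s_-}$. At such a point the two tangent spaces are the graphs of the endomorphisms $-H_{s_+}$ and $H_{s_-}$ of $T_p\ms T$. A pair $(w, -H_{s_+}w)$ lies in both iff $(H_{s_+}+H_{s_-})w = 0$, and since a sum of positive-definite operators is invertible this forces $w=0$; hence the two tangent spaces intersect only in the origin, so by the dimension count they are in direct sum and the intersection is transverse. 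The only ``trick'' is that the minus sign in $-x'_{s_-}$ is exactly what makes the two Hessians add rather than cancel.

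Finally, for $x_{s_+}$ and $-x_{s_-}$ I would use that the Weil--Petersson metric is K\"ahler with respect to $\ms J$, so $\ms J$ is $\nabla^{WP}$-parallel, together with $x_s = \ms J x'_s$ from the corollary: then $x_{s_+}(p) = -x_{s_-}(p)$ iff $x'_{s_+}(p) = -x'_{s_-}(p)$, so the common points are again the critical points of $F_{s_+}+F_{s_-}$, and at such a point the tangent spaces to the two graphs are the graphs of $-\ms J H_{s_+}$ and $\ms J H_{s_-}$. These are in direct sum iff $\ms J(H_{s_+}+H_{s_-})$ is invertible, which holds since $\ms J$ is invertible and $H_{s_+}+H_{s_-}$ is positive definite. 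I do not expect a genuine obstacle here: morally the statement is just that the gradient graphs of two functions whose Hessians combine to an invertible operator are automatically transverse, and the only points requiring care are the bookkeeping of the sign conventions relating $x_s$, $x'_s$ and $\operatorname{grad}_{WP} F_s$, the identification of the tangent space to the graph of a section via the connection, and the standard fact that $(\ms T, g_{WP}, \ms J)$ is K\"ahler.
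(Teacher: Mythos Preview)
Your proof is correct and follows essentially the same approach as the paper: both identify the intersection points as critical points of $F_{s_+}+F_{s_-}$ and deduce transversality from the non-degeneracy (positive-definiteness) of its Weil--Petersson Hessian. You spell out more carefully the tangent-space computation via the connection splitting of $T(T\ms T)$ and, for the $x_{s_\pm}$ case, invoke the K\"ahler property of the Weil--Petersson metric, whereas the paper handles both parts in a single terse sentence; but the underlying argument is the same.
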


\begin{proof}
If at a point $\sigma \in \ms T$ we have $x_{s_+}'(\sigma)=-x_{s_-}'(\sigma)$, then the Weil--Petersson gradients of functions $F_{s_+}$ and $-F_{s_-}$ coincide at $\sigma$. Thus, $\sigma$ is a critical point of $F_{s_+}+F_{s_-}$. Since the latter function is strictly convex with respect to the Weil--Petersson metric, its Weil--Petersson Hessian is non-degenerate. Thereby, the intersection point of $x_{s_+}'$ and $-x_{s_-}'$ is transverse at $x_{s_+}'(\sigma)=-x_{s_-}'(\sigma)$. The same holds for the vector fields $x_{s_+}$, $-x_{s_-}$.
\end{proof}

Note that Smith also proved in~\cite{Smi} that $F_s$ is proper. Thereby, an intersection point in Corollary~\ref{transverse} exists for any pair of metrics. Since $\ms T$ is geodesically convex with respect to the Weil--Petersson metric, it is unique. However, we will not rely on this.
All the discussion allows us to obtain

\begin{lm}
\label{infrigmink}
Let $\rho: \pi_1 S \rar G^0$ be a quasi-Fuchsian representation and $f_+, f_-: \tilde S \rar \R^{2,1}$ be two $\rho$-equivariant strictly convex embeddings, one is future-convex and one is past-convex. Let $\dot \rho \in Z^1_\rho(\mf g^0)$ be a cocycle and $v$ be a $\dot\rho$-automorphic isometric vector field on $f_+(\tilde S)$ and $f_-(\tilde S)$. Then $v$ is trivial, i.e., its restrictions to both $f_+(\tilde S)$ and $f_-(\tilde S)$ coincide with the restrictions of the same global Killing field of $\R^{2,1}$.
\end{lm}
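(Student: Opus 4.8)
The plan is to reduce the claim to the transversality statement of Corollary~\ref{transverse} via the parametrization of equivariant convex surfaces by Gauss--Codazzi fields. First I would encode the data: let $s_\pm$ be the induced metrics on $f_\pm(\tilde S)$ (which descend to negatively curved metrics on $S$), and let $b_\pm \in \mc{GC}^0_\pm(s_\pm)$ be the corresponding shape operators. The vector field $v$, being $\dot\rho$-automorphic and isometric on $f_+(\tilde S)$, represents a tangent vector to $\hat{\mc E}^0_+$ at $(\rho, f_+)$ that is vertical for the induced-metric map, hence corresponds to a tangent vector $\dot b_+ \in T_{b_+}\mc{GC}^0_+(s_+)$ (the induced metric does not move to first order, so we stay in the same fiber $\mc{GC}^0_+(s_+)$). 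Likewise $v$ on $f_-(\tilde S)$ gives $\dot b_- \in T_{b_-}\mc{GC}^0_-(s_-)$. Crucially, both of these have the \emph{same} image $\dot\rho \in Z^1_\rho(\pi_1S, \mf g^0)$ under the differential of the holonomy map, since $v$ induces one and the same deformation of the representation on both surfaces.

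Next I would push this common deformation down to the Teichm\"uller space. Recall $\mc R^0 \cong T\ms T$ via~(\ref{rep0}); composing with the maps $\phi^0_{s_+,+}$ and $\phi^0_{s_-,-}$ and projecting to the base $\ms T$, the representation $\rho$ maps to a single point $\sigma \in \ms T$ lying on both vector fields $x_{s_+}'$ and $-x_{s_-}'$ (equivalently on $x_{s_+}$ and $-x_{s_-}$, after multiplying by $\ms J$ — whichever version is more convenient). Here I am using Theorem~\ref{TV}: the composition of $\phi^0_{s_\pm,\pm}$ with projection to $\ms T$ is a diffeomorphism, so $\phi^0_{s_+,+}$ and $\phi^0_{s_-,-}$ are embeddings onto the graphs of $x_{s_+}$ and $-x_{s_-}$ in $T\ms T$. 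The differentials $d\phi^0_{s_+,+}(\dot b_+)$ and $d\phi^0_{s_-,-}(\dot b_-)$ are therefore tangent, respectively, to these two graphs; but since $v$ produces a single deformation $\dot\rho$ of the holonomy, these two tangent vectors \emph{coincide} inside $T_{\dot\rho}(T\ms T)$. So we have a nonzero tangent vector (assuming $v$ nontrivial) lying simultaneously in $T(x_{s_+}')$ and $T(-x_{s_-}')$ at their common point — i.e., the two graphs fail to intersect transversely there.

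This contradicts Corollary~\ref{transverse}, which asserts transversality at every intersection point. Hence $\dot b_\pm = 0$, meaning $v$ is tangent to the fibers of $\phi^0_{s_\pm,\pm}$ with zero derivative; combined with $\dot\rho$ being a coboundary (so the holonomy deformation is trivial), one concludes $v$ extends to a Killing field of $\R^{2,1}$ on each surface, and these Killing fields agree because they induce the same infinitesimal isometry of the ambient space — so $v$ is trivial. The main obstacle I anticipate is the bookkeeping in the first two steps: verifying carefully that an isometric automorphic vector field corresponds to a tangent vector to $\mc{GC}^0_\pm(s_\pm)$ in the \emph{same} fiber (i.e.\ that the induced metric genuinely does not vary to first order, which is where ``isometric'' is used), and that the two resulting tangent vectors to $T\ms T$ are literally equal and not merely projecting to the same $\dot\rho$ — one must check the fiber directions of $T\ms T \to \ms T$ are matched as well, which should follow since $v$ simultaneously determines the automorphic field on both surfaces with a common $\dot\rho$. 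Once the identifications are set up correctly, the conclusion is immediate from Corollary~\ref{transverse}.
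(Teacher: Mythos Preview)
Your proposal is correct and follows essentially the same approach as the paper's proof: encode the isometric automorphic deformation as tangent vectors $\dot b_\pm$ to the GC-field spaces, push them forward via $\phi^0_{s_\pm,\pm}$ to a common tangent vector to the graphs of $x_{s_+}$ and $-x_{s_-}$ in $T\ms T$, and invoke Corollary~\ref{transverse}. The paper disposes of the degenerate case $\dot b_+ = 0$ explicitly at the outset (using that $\phi^0_{s_-,-}$ is an immersion to force $\dot b_- = 0$ as well), which is exactly the endgame you sketch in your last paragraph.
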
 

Here $v$ is an isometric vector field on $f_\pm(\tilde S)$ if $(\mc L_v g)(u_1, u_2)=0$ for all tangent vectors $u_1$, $u_2$ to $f_\pm(\tilde S)$, where $g$ is the Minkowski metric. 

\begin{proof}
Let $s_+$ and $s_-$ be the induced metrics, $b_+$ and $b_-$ be the shape operators of $f_+$ and $f_-$ and $\dot b_+$ and $\dot b_-$ be their variations induced by $v$. If $\dot b_+=0$, then, by Theorem~\ref{fts}, $v$ is trivial on $f_+(\tilde S)$. Hence $\dot\rho$ is a coboundary. Then, due to Theorem~\ref{TV}, also $\dot b_-=0$ and hence, by Theorem~\ref{fts}, $v$ is trivial also on $f_-(\tilde S)$. Furthermore, on both surfaces it is the restriction of the same Killing field, determined by $\dot\rho$. Thus, we can assume that both $\dot b_+$ and $\dot b_-$ are nonzero. By definition, we have $\phi^0_{s_+,+}(b_+)=\phi^0_{s_-,-}(b_-)$. Since the restrictions of $v$ to both $f_+(\tilde S)$ and $f_-(\tilde S)$ are $\dot\rho$-automorphic, by construction, 
\begin{equation}
\label{t}
d_{b_+}\phi^0_{s_+, +}(\dot b_+)=d_{b_-}\phi^0_{s_-, -}(\dot b_-).
\end{equation}
Due to Theorem~\ref{TV}, expression~(\ref{t}) is nonzero. But this contradicts Corollary~\ref{transverse}, as the images of $\phi_{s_+, +}$ and $\phi_{s_-, -}$ are exactly the vector fields $x_{s_+}$ and $-x_{s_-}$, when the latter are considered as submanifolds of $T\ms T$.	
\end{proof}

\subsection{The infinitesimal Pogorelov map}
\label{ssc:pogorelov}

Recall the definition of $\ads^3$ from Section~\ref{secads}. We now pick a convenient affine chart for it, given by the projection to the plane $\{x \in \R^{2,2}: x_4=1\}$. This plane is also a model for Minkowski 3-space, and we will consider it endowed simultaneously with both geometries. We denote it now by $\R^{2,1}$, and by the origin we will mean the point $o=(0,0,0,1)$. The part of $\ads^3$ that is captured in this plane is the interior of a one-sheeted hyperboloid
\[A:=\{x \in \R^{2,1}: x_1^2+x_2^2-x_3^2 < 1\}.\]
Let $C$ be the union of the future- and the past-cones of $o$. (Note that the future- and the past-cones of $o$ coincide both in Minkowski and in anti-de Sitter geometries.) Denote by $r: C \rar \R_{>0}$ the Minkowski distance to $o$ on $C$, considered as a positive real number.


There is an (infinitesimal) Pogorelov map $\Phi:T C \rar T C$ defined as follows. For $x \in C$ and $v \in T_x C$ let $v_r$ be its radial component, i.e., parallel to the direction of the ray $ox$, and $v_l$ be its lateral component, i.e., orthogonal to the radial direction. (Note that the orthogonality to the radial directions is the same both in Minkowski and in anti-de Sitter geometries.) Then 
\[\Phi(v):=\frac{v_r}{1-r^2(x)}+v_l.\]
The map $\Phi$ is an automorphism of $T C$. Its key property is

\begin{lm}
\label{ipm}
The map $\Phi$ sends anti-de Sitter Killing fields to Minkowski Killing fields. The inverse $\Phi^{-1}$ sends Minkowski Killing fields to anti-de Sitter Killing fields.
\end{lm}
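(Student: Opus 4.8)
The plan is to verify Lemma~\ref{ipm} by a direct computation, exploiting the fact that the space of Killing fields is finite-dimensional and spanned by explicit generators, so it suffices to check the claim on a basis and then invoke linearity of $\Phi$.

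First I would recall the structure of the isometry groups and their Lie algebras. Both $\R^{2,1}$ and $\ads^3$ (realized as the interior of the quadric $A$ in the affine chart $\{x_4=1\}$) have isometry groups whose identity components contain $G_F \cong \psl(2,\R)$ fixing the origin $o$; this common rotational part acts identically in both geometries, and the radial/lateral decomposition of tangent vectors on the cone $C$ is preserved by it. The Killing fields fixing $o$ (the ``rotational'' Killing fields, i.e., the image of $\mf g_F$) are therefore \emph{the same vector fields} in both geometries, and moreover they are everywhere tangent to the level sets $\{r = \mathrm{const}\}$ of the Minkowski radius on $C$, hence purely lateral along $C$; on such fields $\Phi$ acts as the identity, so the claim is immediate for them. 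The interesting content is about the remaining directions: the Minkowski Killing fields generating translations, and the AdS Killing fields that do not fix $o$ (the ``boosts'' toward or away from the quadric). I would write both families explicitly in the affine coordinates $(x_1,x_2,x_3)$, decompose each into radial and lateral parts relative to $o$ along $C$, and then compute $\Phi$ of the AdS generators using the formula $\Phi(v) = v_r/(1-r^2(x)) + v_l$, checking that the result is a Minkowski Killing field (and conversely that $\Phi^{-1}$ of each Minkowski translation generator is an AdS Killing field). The factor $1/(1-r^2)$ is exactly designed to convert the quadratic defect in the AdS metric along the radial direction (coming from the equation $x_1^2+x_2^2-x_3^2 = 1 - x_4^2$ of the quadric, i.e.\ the $1-r^2$ in the chart) into the flat behaviour, which is why this rescaling produces a Killing field for the other geometry.

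The cleanest way to organize the computation, which I would adopt, is to compare the two metrics directly on $C$. In the affine chart, the AdS metric and the Minkowski metric agree on lateral directions but differ by the factor $(1-r^2)^{-2}$ (up to sign conventions) in the radial direction along $C$; equivalently there is a bundle isomorphism $TC \to TC$, namely $\Phi$ itself, that intertwines the two metric tensors restricted to $C$. A vector field $X$ is Killing for a metric $g$ iff $L_X g = 0$; one then checks that for $X$ an AdS Killing field, $\Phi(X)$ satisfies $L_{\Phi(X)} g_{\mathrm{Mink}} = 0$ along $C$, using that $\Phi$ relates the two metrics and that $X$ preserves $g_{\mathrm{AdS}}$. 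But since both spaces of Killing fields have the same dimension (six) and $\Phi$ is linear and injective, the cleanest route is simply: exhibit a basis of the AdS Killing algebra $\mf g$, apply $\Phi$ to each basis element, and recognize the six outputs as a basis of the Minkowski Killing algebra $\mf g^0 \cong \mf g_F \ltimes \R^{2,1}$ — three of them (the $o$-fixing ones) map to the $\mf g_F$ part by the identity, and the three ``boost'' generators map to the three translation generators. The reverse statement for $\Phi^{-1}$ follows formally once the forward map is shown to be an isomorphism onto $\mf g^0$, since $\Phi$ is a bijection of $TC$.

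I expect the main obstacle to be purely bookkeeping: getting the explicit coordinate expressions for the AdS Killing fields in the affine chart right, correctly splitting each into radial and lateral components along the cone $C$ (where $r(x)^2 = x_3^2 - x_1^2 - x_2^2$ with the appropriate sign, and the radial direction is the Euler field direction $x\,\partial_x$ at points of $C$), and tracking sign conventions between the two signatures. There is no conceptual difficulty — the result is essentially the classical projective/Pogorelov trick adapted infinitesimally, and the factor $(1-r^2)^{-1}$ is forced by the requirement that it kill the discrepancy between the two quadratic forms — but one must be careful that $\Phi$ is only defined and only needs to work on $C$ (not on all of the chart), and that ``radial'' and ``lateral'' and ``orthogonal'' are genuinely common to both geometries there, which is the geometric input that makes the identity-on-lateral-part half of the statement trivial. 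Once the six generators are checked, linearity closes the argument.
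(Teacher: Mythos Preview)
The paper does not give its own proof of this lemma; it simply cites \cite[Lemma 11]{Fil} (which in turn follows \cite[Lemma 1.10]{Sch}). So there is nothing to compare against directly, and your proposal is effectively an attempt to supply what the paper outsources.

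Your primary strategy --- split $\mf g$ and $\mf g^0$ into the rotational part $\mf g_F$ fixing $o$ and a three-dimensional complement, observe that the rotational Killing fields are purely lateral on $C$ and hence fixed by $\Phi$, and then check the three remaining generators by hand --- is sound, and in fact matches exactly the decomposition $\mf g = \mf g_r \oplus \mf g_t$, $\mf g^0 = \mf g_r^0 \oplus \mf g_t^0$ that the paper exploits in the very next lemma (Lemma~\ref{modules}). The dimension count and linearity argument for the inverse direction is also fine.

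However, your alternative ``metric intertwinement'' paragraph contains a genuine error and should be dropped. The anti-de Sitter and Minkowski metrics do \emph{not} agree on lateral directions in this chart: writing $q = y_1^2 + y_2^2 - y_3^2$ and $\lambda^2 = 1/(1-q)$, one has $g^{\ads}(v_l,v_l) = \lambda^2\, g^{\mathrm{Mink}}(v_l,v_l)$ and $g^{\ads}(v_r,v_r) = \lambda^4\, g^{\mathrm{Mink}}(v_r,v_r)$, so both directions are rescaled. More seriously, even if $\Phi$ were a pointwise isometry between the two metric tensors, that alone would not send Killing fields to Killing fields: $\Phi$ is a bundle map over the identity, not a diffeomorphism, and $L_X g$ involves derivatives of $X$ that a pointwise rescaling does not control. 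The actual mechanism (in Schlenker's and Fillastre's arguments) is a computation relating $L_X g^{\ads}$ to $L_{\Phi(X)} g^{\mathrm{Mink}}$ that uses both the radial and lateral conformal factors together; it is not the one-line argument you sketch. Stick to the explicit basis check.
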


Here, for convenience, we only consider the restrictions of the Killing fields to $C$, though one can define $\Phi$ over the whole tangent bundle $TA$ of $A$, see, e.g.,~\cite{DMS}.
See~\cite[Lemma 11]{Fil} of Fillastre for a proof of Lemma~\ref{ipm}, which is based on~\cite[Lemma 1.10]{Sch} of Schlenker. In particular, $\Phi$ determines an isomorphism $\Psi: \mf g \rar \mf g^0$ of Lie algebras, considered as vector spaces. Furthermore, every $\gamma \in G_F$ preserves the decomposition into radial and lateral parts, as well as the function $r$. Hence, $\Phi$ is $G_F$-equivariant. In turn, this implies

\begin{lm}
\label{modules}
$\Psi$ is an isomorphism of $G_F$-modules.
\end{lm}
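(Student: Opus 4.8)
The plan is to exhibit $\Psi$ explicitly and check equivariance directly. First I would recall the two $G_F$-module structures in play. On the anti-de Sitter side, $\mf g \cong \mf g_F \oplus \mf g_F$ by the isomorphism~(\ref{g}), with $G_F$ acting diagonally by the adjoint action; equivalently, via the $\psl(2,\R)$-model, $\mf g$ is the space of pairs of Killing fields, and at the point $o$ (the identity matrix, fixed by the diagonal $G_F$) the decomposition into the "$+1$" and "$-1$" eigenspaces of the symmetry at $o$ gives $\mf g \cong \mf g_F \oplus \mf g_F$ as $G_F$-modules, where the first summand is the tangent (Killing fields vanishing at $o$ to first order, i.e.\ "rotations/infinitesimal isometries of $\H^2$") and the second is the normal one ("infinitesimal translations"), canonically $\cong \mf g_F$ and $\cong \R^{2,1}$ respectively. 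On the Minkowski side, $\mf g^0 \cong \mf g_F \ltimes \R^{2,1}$ by~(\ref{g^0}), again with $G_F$ fixing $o$ and acting by the adjoint action on $\mf g_F$ and by the standard action on $\R^{2,1}$; so as a $G_F$-module $\mf g^0 \cong \mf g_F \oplus \R^{2,1}$.

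The key observation is that the Pogorelov map $\Phi$, and hence $\Psi$, respects the decomposition of a Killing field into its value along $C$ at $o$ versus its first-order behavior there, because at $o$ we have $r(o)=0$, so the scaling factor $1/(1-r^2)$ equals $1$ there; the deformation $\Phi$ only rescales the radial component and only away from $o$. Concretely, I would argue: a Killing field $X$ on $\ads^3$ restricted to $C$ has a well-defined "linear part at $o$" (its $1$-jet along $C$), and $\Phi$ preserves the value $X(o)$ exactly while modifying the derivative in a way governed only by $r$; under the identifications above this means $\Psi$ sends $(\xi,\eta) \in \mf g_F \oplus (\mf g_F \cong \R^{2,1})$ to $(\xi, \eta') \in \mf g_F \oplus \R^{2,1}$, where the $\mf g_F$-component is literally unchanged and the $\R^{2,1}$-component is the image of $\eta$ under the canonical $G_F$-module isomorphism $\mf g_F \cong \R^{2,1}$ (the one coming from the Minkowski cross-product, cited from~\cite{FS}). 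Since both components of this description are $G_F$-equivariant — the first trivially, the second by the known equivariance of $\mf g_F \cong \R^{2,1}$ — it follows that $\Psi$ is an isomorphism of $G_F$-modules.

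Alternatively, and perhaps more robustly, I would prove equivariance without identifying $\Psi$ so explicitly: for $g \in G_F \subset G$ fixing $o$, the differential $dg$ acts on $T_oC$, and one checks that $dg$ commutes with the pointwise automorphism $\Phi$ along $C$ because $g$ preserves $o$, preserves the radial rays through $o$ (it fixes $o$ and acts linearly in the affine chart), preserves the lateral/radial orthogonal splitting (orthogonality is the same in both geometries, as noted in the text), and preserves $r$ (it is a Minkowski isometry fixing $o$). Therefore $\Phi \circ g_* = g_* \circ \Phi$ as maps on vector fields along $C$, where $g_*$ denotes pushforward; combined with Lemma~\ref{ipm} (which gives $\Phi(\mathrm{Ad}(g)X) = \Phi(g_* X)$ matching the Minkowski Killing field $g_* \Phi(X) = \mathrm{Ad}^0(g)\Phi(X)$), this yields $\Psi \circ \mathrm{Ad}(g) = \mathrm{Ad}^0(g) \circ \Psi$ for all $g \in G_F$, which is exactly the claim.

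The main obstacle I anticipate is bookkeeping the precise identifications: pinning down that the "value at $o$" versus "first jet at $o$" splitting of $\mf g$ coincides with the $\mf g_F \oplus \mf g_F$ decomposition from~(\ref{g}) in a $G_F$-equivariant way, and that $\Phi$ genuinely acts as the identity on the $o$-value while the derivative correction is exactly the canonical $\mf g_F \cong \R^{2,1}$ twist rather than some other intertwiner. Once the affine-chart model is set up carefully — $o=(0,0,0,1)$ fixed by the diagonal $G_F$, the radial direction at $o$ spanning a copy of $\R^{2,1}$ canonically — this should reduce to a short linear-algebra verification rather than any hard computation, since $G_F$ acts linearly on the chart and $\Phi$ is, infinitesimally at $o$, trivial. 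I would therefore present it in the second form above: reduce to checking $\Phi$ commutes with the linear $G_F$-action on $T_oC$ using the three invariances (fixes $o$, preserves radial rays, preserves $r$ and the orthogonal splitting), then invoke Lemma~\ref{ipm}.
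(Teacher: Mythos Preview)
Your second approach is correct and is essentially the paper's argument, only packaged more efficiently. The paper decomposes $\mf g = \mf g_r \oplus \mf g_t$ and $\mf g^0 = \mf g_r^0 \oplus \mf g_t^0$ into rotations and translations at $o$, characterizes each summand by whether its Killing fields lack a radial component (rotations) or lack a lateral component along some geodesic through $o$ (translations), and then checks that both $G_F$ and $\Psi$ preserve this splitting and act compatibly on each piece. Your argument bypasses the decomposition entirely: since $g\in G_F$ acts linearly in the affine chart, fixes $o$, preserves $r$, and preserves the radial/lateral splitting, one gets $\Phi\circ g_* = g_*\circ \Phi$ directly on $TC$, and then Lemma~\ref{ipm} turns this into $\Psi\circ\mathrm{Ad}(g)=\mathrm{Ad}^0(g)\circ\Psi$. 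Both routes rest on the same invariances; yours is shorter, while the paper's makes the structure of $\Psi$ on each summand explicit.

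One caution about your first approach: the point $o$ does not lie in $C$, so phrasing things via the value and $1$-jet of a Killing field at $o$ would need to be made sense of as a limit along rays, which is more delicate than necessary. Your instinct to present the second argument is the right one.
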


%

For a Fuchsian $\rho: \pi_1S \rar G_F$, Lemma~\ref{modules} establishes a canonical isomorphism \[\Psi_\rho: Z^1_\rho(\mf g) \rar Z^1_\rho(\mf g^0),\] which sends coboundaries onto coboundaries, and hence also induces an isomorphism of cohomologies. Now we also note

\begin{lm}
\label{pogisom}
Let $\Sigma \subset C$ be a submanifold and $v$ be an infinitesimal isometric deformation of $\Sigma$ with respect to the anti-de Sitter metric. Then $\Phi(v)$ is an infinitesimal isometric deformation of $\Sigma$ with respect to the Minkowski metric.
\end{lm}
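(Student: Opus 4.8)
The plan is to reduce the statement to the infinitesimal, pointwise behaviour of the Pogorelov map $\Phi$ and its compatibility with Killing fields, Lemma~\ref{ipm}. The key observation is that ``$v$ is an infinitesimal isometric deformation of $\Sigma$'' is, by definition, an infinitesimal (first-order) condition: it says that the Lie derivative of the ambient metric along $v$, restricted to $T\Sigma$, vanishes -- equivalently, that for every $x \in \Sigma$ and every pair of tangent vectors $w_1, w_2 \in T_x\Sigma$ the symmetrized covariant derivative $\langle \nabla_{w_1} v, w_2\rangle + \langle w_1, \nabla_{w_2} v\rangle$ vanishes (with the ambient connection and metric). Since this is a condition at each point $x$ separately, and since $\Phi$ acts fibrewise on $TC$, it suffices to verify the claim infinitesimally at an arbitrary point $x \in \Sigma$, i.e. to compare the two quadratic forms $w \mapsto (\mathcal L_v g_{\ads})(w,w)$ and $w \mapsto (\mathcal L_{\Phi(v)} g_{\min})(w,w)$ on $T_x\Sigma$.

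First I would recall the standard fact (used already in Schlenker~\cite{Sch} and Fillastre~\cite{Fil}) that the value of $\mathcal L_v g$ at a point $x$ and on vectors $w_1, w_2 \in T_x\Sigma$ depends only on the $1$-jet of $v$ at $x$; moreover, it is determined by comparing $v$ with the Killing field of the ambient geometry having the same $1$-jet at $x$ (such a Killing field exists for both geometries, since $\ads^3$ and $\R^{2,1}$ are homogeneous with isotropy acting transitively on frames). Concretely: choose the anti-de Sitter Killing field $K$ with the same $1$-jet as $v$ at $x$; then $(\mathcal L_v g_{\ads})(w_1,w_2) = (\mathcal L_{v-K} g_{\ads})(w_1,w_2)$ and $v - K$ vanishes to first order at $x$, so the Lie derivative value equals the second-order symmetric part of $v-K$ at $x$, measured with $g_{\ads}$. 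Next I would invoke Lemma~\ref{ipm}: $\Phi(K)$ is a Minkowski Killing field. Since $\Phi$ is linear on each fibre and smooth, $\Phi(v)$ and $\Phi(K)$ have the same $1$-jet at $x$ as well (the $1$-jet of $\Phi(v)$ at $x$ is the image under $\Phi$ and its derivative of the $1$-jet of $v$; and $\Phi(K)$ being Killing, its $1$-jet agrees with that obtained from $\Phi(v)$). Hence $(\mathcal L_{\Phi(v)} g_{\min})(w_1,w_2) = (\mathcal L_{\Phi(v) - \Phi(K)} g_{\min})(w_1,w_2)$, which is again a second-order quantity.

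The remaining -- and main -- step is to check that the two second-order quantities actually match, i.e. that $\Phi$ transforms the ``second-order defect'' $v - K$ computed with $g_{\ads}$ into the second-order defect $\Phi(v-K) = \Phi(v) - \Phi(K)$ computed with $g_{\min}$ consistently with the isometric-deformation condition. Here I expect the cleanest route is the one already encoded in the definition of $\Phi$: writing $v = v_r + v_l$ in radial and lateral parts and using that radial and lateral directions, as well as orthogonality between them, agree in the two geometries (as emphasized in Section~\ref{ssc:pogorelov}), one checks that the radial rescaling factor $1/(1-r^2)$ together with the explicit forms of the two metrics along $C$ conspire so that $\mathcal L_v g_{\ads}|_{T\Sigma} = 0$ is equivalent to $\mathcal L_{\Phi(v)} g_{\min}|_{T\Sigma} = 0$. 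The honest way to organize this is to appeal directly to Lemma~\ref{ipm} as above, so that the only computation needed is the verification that $\Phi$ commutes with ``subtracting off the osculating Killing field'' in a way compatible with the metric comparison -- this is exactly Schlenker's argument in \cite[Lemma 1.10]{Sch} and Fillastre's in \cite[Lemma 11]{Fil}, transplanted to the submanifold $\Sigma$; the restriction to $\Sigma$ causes no difficulty because everything is pointwise and $T_x\Sigma \subset T_xC$. The main obstacle is thus purely bookkeeping: making sure the identification of $1$-jets under $\Phi$ is done correctly and that the metric comparison on the lateral/radial splitting is handled without sign or scaling errors; there is no global or analytic difficulty, since the statement is entirely infinitesimal and local.
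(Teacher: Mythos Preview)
Your overall strategy matches the paper's: the paper does not spell out a proof but simply refers to \cite[Lemma~1.10]{Sch} together with the metric identities of \cite[Lemmas~12--13]{Fil}, and the ``subtract an osculating Killing field and invoke Lemma~\ref{ipm}'' scheme you sketch is precisely the mechanism behind that reference.

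There is, however, a real slip in your execution. You assert that one can choose an anti-de Sitter Killing field $K$ with the \emph{same full $1$-jet} as $v$ at $x$. This is false on dimensional grounds: the space of $1$-jets of vector fields at a point of a $3$-manifold has dimension $12$, while $\mf g$ has dimension $6$. Were it true, then since $\mathcal L_v g$ at $x$ depends only on the $1$-jet of $v$ at $x$, you would get $\mathcal L_v g_{\ads}|_x = \mathcal L_{v-K}\, g_{\ads}|_x = 0$ for \emph{every} vector field $v$, which is absurd. In particular your ``second-order symmetric part of $v-K$'' is identically zero, and the subsequent discussion of matching second-order defects is a red herring: the whole problem is first-order.

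The clean fix, after which the argument is essentially one line past Lemma~\ref{ipm}, is to use the correct characterization: $v$ is an infinitesimal isometric deformation of $\Sigma$ in $(\ads^3,g_{\ads})$ if and only if for each $x\in\Sigma$ there exists an anti-de Sitter Killing field $K_x$ agreeing with $v$ to first order \emph{along $\Sigma$} at $x$, i.e.\ $K_x(x)=v(x)$ and $D_wK_x(x)=D_wv(x)$ for all $w\in T_x\Sigma$. (The three scalar equations $\mathcal L_v g_{\ads}(w_i,w_j)=0$ on $T_x\Sigma$ are exactly what makes this $9$-dimensional matching problem solvable in the $6$-dimensional Killing algebra.) Since $\Phi$ is a smooth fibrewise-linear automorphism of $TC$, the fields $\Phi(v)$ and $\Phi(K_x)$ then also agree to first order along $\Sigma$ at $x$; by Lemma~\ref{ipm}, $\Phi(K_x)$ is a Minkowski Killing field, and the same characterization in $(\R^{2,1},g_{\min})$ finishes the proof. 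No further metric comparison is needed beyond what is already packaged in Lemma~\ref{ipm}.
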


A proof follows word by word the proof of Lemma 1.10 in~\cite{Sch}, which deals with the Pogorelov map from hyperbolic space to Euclidean space. The role of relevant metric computations is played by Lemmas 12 and 13 from~\cite{Fil}.

\begin{lm}
\label{pogaut}
Let $\rho \in \hat{\mc R}_F$, $\Sigma \subset C$ be a $\rho$-invariant submanifold and $v$ be a vector field over $\Sigma$, automorphic with respect to $\dot\rho \in Z^1_\rho(\mf g)$. Then $\Phi(v)$ is automorphic with respect to $\Psi_\rho(\dot \rho) \in Z^1_\rho(\mf g^0)$.
\end{lm}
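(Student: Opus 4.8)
The plan is to unwind the definition of automorphicity~(\ref{automorph}) and push it through $\Phi$ term by term, using three facts: that $\Phi$ is fibrewise linear, that $\Phi$ is $G_F$-equivariant, and that on Killing fields $\Phi$ realizes the isomorphism $\Psi$ of Lemma~\ref{modules}. Writing the value of a Killing field $\xi$ at a point as $\xi(x)$, automorphicity of $v$ with respect to $\dot\rho$ reads
\[v(\rho(\gamma)x) = d\rho(\gamma).\bigl(v(x) + \dot\rho(\gamma)(x)\bigr) \qquad (\gamma \in \pi_1 S,\ x \in \Sigma),\]
and the goal is the analogous identity with $v$ replaced by $\Phi(v)$, with $\dot\rho$ replaced by $\Psi_\rho(\dot\rho)$, and with all differentials and Killing fields read in the Minkowski sense.

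First I would record the $G_F$-equivariance of $\Phi$. Every $g \in G_F$ fixes $o$ and acts on $\R^{2,1}$ by the same linear map whether the ambient geometry is Minkowski or anti-de Sitter; hence it maps rays through $o$ to rays through $o$, preserves the radial/lateral splitting of $TC$, and preserves the Minkowski distance $r$ to $o$. Therefore $\Phi_{gx} \circ dg = dg \circ \Phi_x$ on $T_xC$ — this is exactly the mechanism already used in the proof of Lemma~\ref{modules}. Applying the fibrewise-linear map $\Phi_{\rho(\gamma)x}$ to both sides of the displayed identity and distributing over the sum, the left side becomes $\Phi(v)(\rho(\gamma)x)$ and the first right-hand term becomes $\Phi_{\rho(\gamma)x}(d\rho(\gamma).v(x)) = d\rho(\gamma).\Phi_x(v(x)) = d\rho(\gamma).\Phi(v)(x)$.

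It remains to identify $\Phi_{\rho(\gamma)x}\bigl(d\rho(\gamma).\dot\rho(\gamma)(x)\bigr)$ with $d\rho(\gamma).\Psi_\rho(\dot\rho)(\gamma)(x)$, all differentials now read in the Minkowski sense. Write $g = \rho(\gamma) \in G_F$ and let $g_*\xi$ denote the Killing field obtained by pushing $\xi$ forward by $g$, so that $dg(\xi(x)) = (g_*\xi)(gx)$ in either geometry. Then $d\rho(\gamma).\dot\rho(\gamma)(x) = (g_*\dot\rho(\gamma))(gx)$ as an anti-de Sitter Killing field evaluated at $gx$; applying $\Phi_{gx}$ and invoking Lemma~\ref{ipm} turns it into the Minkowski Killing field $\Psi(g_*\dot\rho(\gamma))$ evaluated at $gx$; and Lemma~\ref{modules} — the $G_F$-module structures on $\mf g$, $\mf g^0$ being exactly the pushforward actions on Killing fields — rewrites $\Psi(g_*\dot\rho(\gamma)) = g_*\Psi(\dot\rho(\gamma))$, whence $\bigl(g_*\Psi(\dot\rho(\gamma))\bigr)(gx) = dg\bigl(\Psi(\dot\rho(\gamma))(x)\bigr)$, now in Minkowski geometry. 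Since $\Psi_\rho(\dot\rho)(\gamma) = \Psi(\dot\rho(\gamma))$ by definition, summing the two terms yields $\Phi(v)(\rho(\gamma)x) = d\rho(\gamma).\bigl(\Phi(v)(x) + \Psi_\rho(\dot\rho)(\gamma)(x)\bigr)$, i.e. $\Phi(v)$ is automorphic with respect to $\Psi_\rho(\dot\rho)$.

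The only genuinely delicate point is bookkeeping: the same element $\rho(\gamma) \in G_F$ gives two a priori different differentials on $T_xC$, one from its action as an anti-de Sitter isometry and one as a Minkowski isometry, and one must check these agree on $TC$. They do, because $G_F$ fixes $o$ and acts projectively on $\ads^3$ via the same linear representation that acts linearly on $\R^{2,1}$, so both differentials restrict to that linear map. Everything else is a formal consequence of the fibrewise linearity of $\Phi$ together with Lemmas~\ref{ipm} and~\ref{modules}; no further geometric input is required.
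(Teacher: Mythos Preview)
Your proof is correct and rests on the same mechanism as the paper's: the $G_F$-equivariance of $\Phi$, which amounts to Fuchsian elements preserving the distance $r$ and the radial/lateral splitting of $TC$. The paper carries this out as a single bare-hands computation in the radial/lateral coordinates---noting that $\Psi_\rho(\dot\rho)(\gamma)(x)=\Phi(\dot\rho(\gamma)(x))$ by definition, combining it with $\Phi(v(x))$, and pushing through $d\rho(\gamma)$ in one step---whereas you separate the $v$-term and the Killing term and treat the latter via Lemmas~\ref{ipm} and~\ref{modules}; but this is a packaging difference, not a different argument.
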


\begin{proof}
For all vector fields over $\Sigma$ we use subscripts $r$, $l$ to denote the radial and the lateral components. The key point is that Fuchsian representations preserve the function $r$, and hence the Pogorelov map is well-behaved. Here is the computation. For $x \in \Sigma$ we have
\[d\rho(\gamma)\big(\Phi(v(x))\big)+\Psi_\rho(\dot\rho)(\gamma)(\rho(\gamma)x)=\frac{d\rho(\gamma)(v_r(x))+\dot\rho(\gamma)_r(\rho(\gamma)x)}{1-r^2(x)}+\]
\[+d\rho(\gamma)(v_l(x))+\dot\rho(\gamma)_l(\rho(\gamma)x)
=\frac{v_r(\rho(\gamma)x)}{1-r^2(\rho(\gamma)x)}+v_l(\rho(\gamma)x)=\Phi(v(\rho(\gamma)x)),\]
which verifies the automorphicity condition~(\ref{automorph}).
\end{proof}

Now we have all in our hands to conclude a proof.

\begin{proof}[Proof of Proposition~\ref{infrig}.]
Consider $e \in \mc E_F$. Lift it to a triple $(\rho, f_+, f_-) \in \hat{\mc E}_F$ of a representation $\rho: \pi_1 S \rar G_F$, where $G_F$ is considered as a subgroup of $G$, and of $\rho$-equivariant developing maps $f_+, f_-: \tilde S \rar \ads^3$. We assume that $o$ is the fixed point of $G_F$ as a subgroup of $G$. Then the images of $f_+$ and $f_-$ are contained in $C$. Pick $\dot e \in T_e \mc E$ such that $d_e\mc I(\dot e)=0$. It produces a cocycle $\dot\rho \in Z^1_\rho(\mf g)$ and a $\dot\rho$-automorphic vector field $v$ on $f_+(\tilde S)$ and $f_-(\tilde S)$, isometric with respect to the anti-de Sitter metric.

Apply the Pogorelov map $\Phi$ to $v$. The combination of Lemma~\ref{pogaut} and Lemma~\ref{pogisom} implies that $\Phi(v)$ is a $\Psi_\rho(\dot \rho)$-automorphic vector field on $f_+(\tilde S)$ and $f_-(\tilde S)$, isometric with respect to the Minkowski metric. Hence, by Lemma~\ref{infrigmink}, $\Phi(v)$ is trivial, thereby so is $v$. It follows that $\dot e=0$.
\end{proof}

\begin{rmk}
It is interesting to notice that even though one can define the infinitesimal Pogorelov map outside of $C$, this proof works only when the pair of embeddings is Fuchsian. The reason is that for a general equivariant embedding, its image in the projective model will not be equivariant with respect to some Minkowski representation. There is a nice way to do a rescaling, to obtain quasi-Fuchsian Minkowski spacetimes from anti-de Sitter ones (more precisely, from the components of the complements to the convex core), well-behaved on holonomies, see~\cite{BB} by Benedetti--Bonsante. However, this is not a projective map, and hence does not have a related infinitesimal Pogorelov map. (An infinitesimal Pogorelov map can be associated to any projective map between ambient spaces, see~\cite{FS5}).
\end{rmk}

\section{Rigidity near the Fuchsian locus}

\subsection{A bit of functional analysis}
\label{secfa}

For a bounded operator $\mc L$ between two Banach spaces $\mc A, \mc B$, we define its reduced minimum modulus
\[\gamma(\mc L):=\inf\{\|\mc La\|: a\in \mc A, d(a, \ker \mc L)=1\}.\]
When the range of $\mc L$ is closed, $\gamma(\mc L)>0$. See, e.g.,~\cite[Chapter IV, Theorem 5.2]{Kat}.

For two subspaces $A,B$ of a Banach space, we define their distance as
\begin{equation}
\label{subspdist}
d(A, B):=\sup\{d(a, B): a \in A, \|a\|\leq 1\}.
\end{equation}

We will need the following 

\begin{lm}
\label{subconverge}
Let $A$ be a finite-dimensional subspace in a Banach space, and $A_i$ be a sequence of subspaces, converging to $A$ in the distance~(\ref{subspdist}). Let $a_i \in A_i$ be a uniformly bounded sequence of vectors. Then, up to subsequence, $a_i$ converges to $a \in A$.
\end{lm}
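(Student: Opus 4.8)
The plan is to show that, by the hypothesis $d(A_i, A) \to 0$, every unit vector $a_i \in A_i$ lies within distance $o(1)$ of the fixed finite-dimensional subspace $A$; then a compactness argument in $A$ (using that its unit sphere is compact) extracts a convergent subsequence, and finally one checks that the limit is a unit vector of $A$.

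First I would pick, for each $i$, a vector $b_i \in A$ realizing (or nearly realizing) the distance from $a_i$ to $A$, so that $\|a_i - b_i\| = d(a_i, A) \le d(A_i, A) =: \e_i \to 0$ (here using $\|a_i\| = 1$ and the definition~(\ref{subspdist})). The triangle inequality gives $\big|\,\|b_i\| - 1\,\big| \le \e_i$, so the $b_i$ are bounded; in fact $\|b_i\| \to 1$. Since $A$ is finite-dimensional, the sequence $(b_i)$ has a subsequence converging to some $a \in A$, and along that subsequence $\|a\| = \lim \|b_i\| = 1$, so $a$ is a unit vector. Passing to the same subsequence, $\|a_i - a\| \le \|a_i - b_i\| + \|b_i - a\| \le \e_i + \|b_i - a\| \to 0$, which is the desired convergence.

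There is essentially no serious obstacle here: the only point requiring a little care is that the infimum defining $d(a_i, A)$ need not be attained by a single closest point in a general Banach space, but since $A$ is finite-dimensional it \emph{is} attained (a closed bounded set in $A$ is compact), so $b_i$ exists; alternatively one simply chooses $b_i$ with $\|a_i - b_i\| \le \e_i + 1/i$, which changes nothing. The finite-dimensionality of $A$ is used twice — once to guarantee the closest point exists, and once, crucially, to get the convergent subsequence of $(b_i)$ via sequential compactness of bounded sets. I would present this as a short three- or four-line argument.
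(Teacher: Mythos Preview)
Your proposal is correct and follows essentially the same approach as the paper: pick a nearest point $b_i \in A$ to each $a_i$ (which exists since $A$ is finite-dimensional), observe the $b_i$ are bounded, extract a convergent subsequence by compactness in $A$, and conclude that the corresponding $a_i$ converge to the same limit. The paper's version is slightly terser and does not bother to record that the limit is a unit vector, but the argument is the same.
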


\begin{proof}
Since $A$ is finite-dimensional, for every $a_i$ there is $a'_i \in A$ realizing the distance from $a_i$ to $A$. The sequence $a'_i$ is uniformly bounded. Since bounded subsets of $A$ are precompact, a subsequence of $a'_i$ converges to some $a$. It is easy to see that the same subsequence of $a_i$ also converges to $a$.
\end{proof}

\subsection{Local infinitesimal rigidity}
\label{ssc:locinf}

We show now the following result.

\begin{prop}
\label{inflocrig}
Let $e \in \mc E_F$. There exists a neighborhood $U_{e}$ of $e$ in $\mc E$ such that for every $e' \in U_{e}$ the differential $d_{e'} \mc I$ is injective.
\end{prop}

The first point is that the infinitesimal rigidity can be expressed in terms of transversality of submanifolds. (We already used this idea in the proof of Lemma~\ref{infrigmink}.)

\begin{lm}
\label{transverseloc}
Consider $e' \in \mc E$. Let $s_+$, $s_-$, $b_+$ and $b_-$ be the respective induced metrics and shape operators on $S$ given by $e'$. The differential $d_{e'}\mc I$ is injective if and only if the images of $\phi_{s_+,+}$ and $\phi_{s_-,-}$ intersect transversely at $\phi_{s_+,+}(b_+)=\phi_{s_-,-}(b_-)$.
\end{lm}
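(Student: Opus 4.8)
The plan is to unwind the definitions and reduce the injectivity of $d_{e'}\mc I$ to a transversality statement about the two maps $\phi_{s_+,+}$ and $\phi_{s_-,-}$ into $\mc R \cong \ms T \times \ms T$. Recall that by the previous subsections, $\mc E$ sits over $\mc R$ via $\pi = (\pi_+,\pi_-)$, and that for a point $e' \in \mc E$ with induced metrics $s_\pm$ and shape operators $b_\pm$, the fundamental theorem of surfaces gives $C^1$-immersions $\psi_{s_\pm,\pm}: \mc{GC}_\pm(s_\pm) \rar \mc E_\pm$ realizing $b_\pm$, with $\phi_{s_\pm,\pm} = \pi_\pm \circ \psi_{s_\pm,\pm}$. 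The key structural observation is that a tangent vector $\dot e \in T_{e'}\mc E$ in the kernel of $d_{e'}\mc I$ is exactly the data of a cocycle $\dot\rho \in Z^1_\rho(\pi_1 S,\mf g)$ together with a $\dot\rho$-automorphic vector field $v$ on $f_+(\tilde S)$ and on $f_-(\tilde S)$ that is isometric (i.e.\ $\dot s_+ = \dot s_- = 0$). Such a $v$, restricted to $f_\pm(\tilde S)$, corresponds via $\psi_{s_\pm,\pm}$ to a tangent vector $\dot b_\pm \in T_{b_\pm}\mc{GC}_\pm(s_\pm)$, and the common holonomy variation $\dot\rho$ means precisely $d_{b_+}\phi_{s_+,+}(\dot b_+) = d_{b_-}\phi_{s_-,-}(\dot b_-)$ inside $T_\rho\mc R$.

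First I would make this correspondence precise in both directions. Given $\dot e \in \ker d_{e'}\mc I$, produce $(\dot\rho, v)$ as in the proof of Lemma~\ref{infrig}; since $v|_{f_\pm(\tilde S)}$ is isometric, the induced metric variations vanish, so under the parametrization $\psi_{s_\pm,\pm}$ of isometric embeddings of $(S,s_\pm)$ we get well-defined $\dot b_\pm$ with $\pi_\pm(\psi_{s_\pm,\pm}$-image of $\dot b_\pm) = \dot\rho$, i.e.\ $d\phi_{s_+,+}(\dot b_+) = d\phi_{s_-,-}(\dot b_-)$. Conversely, a pair $(\dot b_+, \dot b_-)$ with $d_{b_+}\phi_{s_+,+}(\dot b_+) = d_{b_-}\phi_{s_-,-}(\dot b_-)$ gives, through $\psi_{s_\pm,\pm}$, automorphic isometric vector fields $v_\pm$ on $f_\pm(\tilde S)$ automorphic with respect to cocycles $\dot\rho_\pm$ whose cohomology classes agree; after correcting $v_-$ by a Killing field (adjusting $\dot\rho_-$ by a coboundary) one can arrange $\dot\rho_+ = \dot\rho_-$, and then $(\dot\rho, v_+, v_-)$ assembles to an element $\dot e \in T_{e'}\mc E$ with $d_{e'}\mc I(\dot e) = 0$. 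Thus $\ker d_{e'}\mc I$ is identified with the space of pairs $(\dot b_+,\dot b_-) \in T_{b_+}\mc{GC}_+(s_+) \times T_{b_-}\mc{GC}_-(s_-)$ satisfying $d\phi_{s_+,+}(\dot b_+) = d\phi_{s_-,-}(\dot b_-)$, modulo the contributions that come from trivial (Killing) vector fields. Since $\psi_{s_\pm,\pm}$ are immersions and the images of $\phi_{s_\pm,\pm}$ in $\mc R \cong T\ms T$-like coordinates are the submanifolds in question, this kernel is zero precisely when $\mathrm{Image}(d_{b_+}\phi_{s_+,+}) \cap \mathrm{Image}(d_{b_-}\phi_{s_-,-}) = 0$ in $T_\rho\mc R$, which is the transversality assertion (both images being $(6k-6)$-dimensional inside the $(12k-12)$-dimensional $T_\rho\mc R$).

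The main obstacle I anticipate is bookkeeping the trivial deformations correctly: one must verify that the ambiguity of Killing fields is consistently quotiented on both sides, so that a genuinely nonzero element of $\ker d_{e'}\mc I$ corresponds to a genuinely nonzero intersection vector in $T_\rho\mc R$ and vice versa — in other words, that passing to $\mc E$ (quotient by $G$) rather than $\hat{\mc E}$ does not introduce or kill kernel elements. This is exactly where one uses that $\psi_{s_\pm,\pm}$ are immersions (a trivial $v$ on $f_+(\tilde S)$ forces $\dot b_+ = 0$, and then forces $\dot\rho$ to be a coboundary, which by transversality forces $\dot b_- = 0$ too, as in the argument in the proof of Lemma~\ref{infrigmink}). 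Once this identification of $\ker d_{e'}\mc I$ with the intersection $\mathrm{Image}(d\phi_{s_+,+}) \cap \mathrm{Image}(d\phi_{s_-,-})$ is established, the equivalence with transversality at $\phi_{s_+,+}(b_+) = \phi_{s_-,-}(b_-)$ is immediate by linear algebra, and the proof is complete.
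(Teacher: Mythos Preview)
Your proposal is correct and follows essentially the same approach as the paper's proof: both identify a nonzero element of $\ker d_{e'}\mc I$ with a nonzero common value $d_{b_+}\phi_{s_+,+}(\dot b_+)=d_{b_-}\phi_{s_-,-}(\dot b_-)$ in $T_\rho\mc R$, using Theorem~\ref{CS} to rule out the degenerate case where one of the $\dot b_\pm$ vanishes. One small correction: where you write ``which by transversality forces $\dot b_- = 0$'', you should write ``by Theorem~\ref{CS}'' (i.e., the immersion property of $\phi_{s_-,-}$), exactly as in the passage of Lemma~\ref{infrigmink} you cite; transversality is the conclusion you are establishing, not an available hypothesis at that step.
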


\begin{proof}
Fix a lift $(\rho, f_+, f_-) \in \hat{\mc E}$ of $e'$. Pick $\dot e \in T_{e'}\mc E$. If $d_{e'}\mc I(\dot e)=0$, then $\dot e$ can be represented by $(\dot\rho, \dot f_+, \dot f_-)$ such that $\dot f_+$ and $\dot f_-$ are isometric on $f_+(\tilde S)$ and $f_-(\tilde S)$. Suppose that $\dot f_+$ induces zero variation on the shape operator of $f_+(\tilde S)$. By Theorem~\ref{fts}, $\dot f_+$ is trivial. Then $\dot\rho$ is a coboundary and $\dot e=0$. Suppose now that it is not the case and $\dot b_+$ and $\dot b_-$ are nonzero variations of the shape operators. Then, by construction, $d_{b_+}\phi_{s_+, +}(\dot b_+)=d_{b_-}\phi_{s_-, -}(\dot b_-)$, which is nonzero due to Theorem~\ref{CS}. Thus $\phi_{s_+,+}$ and $\phi_{s_-,-}$ do not intersect transversely. The converse is shown in the same way.
\end{proof}

Now we need to show that if $s_i$ converge to $s$ and 
$\rho_i \in {\rm Im}(\phi_{s_i, \pm})$ converge to $\rho \in {\rm Im}(\phi_{s, \pm})$, then the tangent space to ${\rm Im}(\phi_{s_i, \pm})$ at $\rho_i$ converge to the tangent space to ${\rm Im}(\phi_{s, \pm})$ at $\rho$. 
Let $\mc A$ be the Sobolev space $W^{1,2}(T^1_1S)$ of sections of $T^1_1S$ that have finite $L^2$-norm and so do their first weak derivatives. We consider $\mc A$ as a Banach space.
The desired convergence result follows from

\begin{lm}
\label{tangspaces}
Consider a sequence $s_i$ in $\mc S$ that $C^\infty$-converges to $s \in \mc S$. Let $b_i$ be a sequence of GC-fields for $s_i$ that $C^\infty$-converges to a GC-field $b$ for $s$. Then $T_{b_i}\mc{GC}(s_i)$ converges to $T_b\mc{GC}(s)$ as subspaces of $\mc A$ in the sense of distance~(\ref{subspdist}).
\end{lm}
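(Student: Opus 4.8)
The plan is to describe $\mc{GC}(s)$ locally as the kernel of a single linear first-order differential operator depending on $s$, and then to show that when $s_i \to s$ and $b_i \to b$ in $C^\infty$, the kernels of the corresponding linearized operators converge in the distance~(\ref{subspdist}). First I would recall that $T_b\mc{GC}(s)$ consists of those $s$-self-adjoint fields $\dot b \in C^\infty(T^1_1S)$ satisfying the linearized Gauss--Codazzi equations at $b$: linearizing (1) gives a pointwise linear condition $\mathrm{tr}(\mathrm{cof}(b)\,\dot b)=0$ (equivalently a zeroth-order condition involving $b$ and, through $\kappa$, the metric $s$ — but since we only vary $b$ here and hold $s_i$ fixed, it is a pointwise linear constraint with coefficients depending on $b_i$), and linearizing (2) gives $d^{\nabla_i}\dot b + (\text{zeroth-order term in }\dot b \text{ and } b_i)=0$, a first-order equation whose coefficients depend smoothly on $s_i$ (through $\nabla_i$) and on $b_i$. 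Packaging these together yields a first-order linear operator $P_i: \mc A \to \mc B$ (for a suitable Sobolev target $\mc B$) with $\ker P_i \supseteq T_{b_i}\mc{GC}(s_i)$, and by elliptic/finite-dimensionality considerations — Tamburelli's \cite[Lemma 3.2]{Tam} gives $\dim \mc{GC}(s) = 6k-6$, and smooth solutions of the linearized system form exactly the tangent space — one gets equality $\ker P_i = T_{b_i}\mc{GC}(s_i)$ as subspaces of $\mc A$, all of the same finite dimension $6k-6$.

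The second step is an abstract convergence statement: if $P_i \to P$ in operator norm $\mc A \to \mc B$, and $\ker P_i$, $\ker P$ all have the same finite dimension $d$, then $\ker P_i \to \ker P$ in the distance~(\ref{subspdist}). This is where I would invoke the functional-analytic preliminaries: since $P$ has closed range (finite-dimensional kernel, and the range is closed by ellipticity / the finite-dimensionality of the cokernel — or one argues directly), its reduced minimum modulus satisfies $\gamma(P)>0$; combined with $\|P_i - P\| \to 0$ this forces, via the identity $\gamma(P)\,d(a,\ker P) \le \|Pa\| = \|(P - P_i)a\| \le \|P-P_i\|\,\|a\|$ for $a \in \ker P_i$, that $d(\ker P_i, \ker P) \to 0$. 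The reverse direction $d(\ker P, \ker P_i) \to 0$ needs the equality of dimensions: if it failed, one could extract (using Lemma~\ref{subconverge}, since $\ker P$ is finite-dimensional) a unit vector $a \in \ker P$ staying a definite distance from every $\ker P_i$, contradicting a dimension-counting/compactness argument that a $d$-dimensional subspace cannot be uniformly far from a sequence of $d$-dimensional subspaces that are uniformly close to it from the other side. Concretely I would combine the one-sided estimate with the fact that both families have the same dimension $6k-6$ to upgrade to two-sided convergence.

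The remaining task is to verify the hypothesis $P_i \to P$ in $\mc A \to \mc B$ operator norm, which is a routine estimate: the coefficients of $P_i$ are universal smooth expressions in $s_i$, its Christoffel symbols, and $b_i$, and $C^\infty$-convergence of $s_i \to s$ and $b_i \to b$ gives $C^0$ (indeed $C^\infty$) convergence of these coefficients, hence operator-norm convergence of the associated first-order operators between the fixed Sobolev spaces. I expect the main obstacle to be the bookkeeping needed to make ``$\ker P_i = T_{b_i}\mc{GC}(s_i)$ as subspaces of the Sobolev space $\mc A$'' precise and uniform in $i$: one must be careful that $\mc{GC}(s_i)$ a priori lives among smooth sections while $\mc A = L^2_1(T^1_1 S)$ is larger, so the identification of tangent spaces requires an elliptic regularity step (weak $L^2_1$ solutions of the linearized system are smooth) and a uniform-in-$i$ dimension bound — both of which follow from Tamburelli's structure result applied with coefficients varying in a $C^\infty$-compact family, but this is the part that demands care. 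Once that identification is in place, the convergence of kernels is the soft functional-analytic argument sketched above.
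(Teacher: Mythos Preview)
Your outline shares the paper's backbone---describe the tangent spaces as kernels of first-order operators depending on $(s_i,b_i)$, then use the reduced minimum modulus inequality $\gamma(P)\,d(a,\ker P)\le\|Pa\|$ to push $\ker P_i$ toward $\ker P$ as $P_i\to P$ in operator norm. Two small corrections first: linearizing $d^{\nabla}b=0$ in $b$ alone gives simply $d^{\nabla}\dot b=0$ (no zeroth-order term, since $\nabla$ depends only on $s$); and the lemma only asks for the one-sided estimate $d(\ker P_i,\ker P)\to 0$, so your discussion of the reverse direction is unnecessary.

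The substantive gap is exactly the one you flag but do not close. If you package the self-adjointness constraint, the linearized Gauss constraint, and $d^{\nabla}$ into a single operator $P:\mc A=L^2_1(T^1_1S)\to\mc B$, then the principal symbol of $P$ is just that of $d^{\nabla}$ (the other constraints are zeroth order), and at a nonzero covector $\xi$ this symbol has a two-dimensional kernel. So $P$ is \emph{not} elliptic on the full bundle, and neither closed range nor elliptic regularity is automatic; you cannot invoke $\gamma(P)>0$ without further work. One can rescue this by first splitting off the zeroth-order constraints (their kernel is $L^2_1$ of a rank-two subbundle $F\subset T^1_1S$) and then observing that $d^{\nabla}|_{L^2_1(F)}$ is elliptic of index $6k-6$ with trivial cokernel---but that is precisely Tamburelli's computation, and it is most cleanly accessed the way the paper does it.

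The paper's route is to use Labourie's identification $K_i:a\mapsto-\lambda_ij_ia$ (with $\lambda_i=\sqrt{-\kappa_i-1}$), which sends the bundle $J$ of almost-complex structures isomorphically onto the bundle of $s_i$-self-adjoint operators satisfying the Gauss equation. Pulling back $d^{\nabla_i}$ by $K_i$ gives an operator $\mc L_i$ on the tangent space $C^\infty(Z_i)$ to $C^\infty(J)$ at $j_i$; Tamburelli shows each $\mc L_i$ is elliptic of index $6k-6$ with trivial cokernel. To get all operators on the \emph{same} Banach space, the paper then constructs a canonical conjugation $a_i\in C^\infty(\mathbb P(T^1_1S))$ with $a_ij_ia_i^{-1}=j$ (using the $\psl(2,\R)$-model of $\ads^3$ to pick the conjugator canonically), so that the conjugated operators $(\mc R_i)_*\mc L_i$ all act on the fixed space $\mc Z=L^2_1(Z)$ and converge to $\mc L$. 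At that point the reduced minimum modulus argument runs exactly as you wrote it. What this buys over your direct approach is that the varying-domain issue and the ellipticity/closed-range issue are handled simultaneously by the identification with almost-complex structures; your approach would need to supply both pieces separately.
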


\begin{proof}
For every $i$ denote by $G_i \subset T^1_1S$ the subbundle of $s_i$-symmetric operator fields $TS \rar TS$ satisfying the Gauss equation for $s_i$. Denote by $J \subset T^1_1S$ the subbundle of almost-complex structures on $S$ (compatible with the orientation) and let $j_i$, $j$ be the almost-complex structures of $s_i$, $s$. Consider the bundle isomorphisms $K_i: T^1_1S \rar T^1_1 S$ given by
\[ K_i: a \mapsto -\lambda_ij_ia, \]
where $\lambda_i:=\sqrt{-\kappa_i-1}$ for the curvature $\kappa_i$ of $s_i$.
Notice that $K_i$ induce isomorphisms between the bundles $J$ and $G_i$. This construction goes back to Labourie~\cite{Lab}, who studied immersions of surfaces into hyperbolic 3-manifolds.
The maps $K_i$ determine the isomorphisms $\mc K_i: \mc A \rar \mc A$. (By isomorphisms between Banach spaces we mean bounded linear bijections with bounded inverse.) The metric $s$ similarly defines the maps $K$ and $\mc K$. The operators $\mc K_i$ converge to $\mc K$ in the sense of operator norms. Define $a_i:=\mc K_i^{-1}b_i$, $a:=\mc K^{-1}b$. Then $a_i$ $C^\infty$-converge to $a$.

The tangent space to $C^\infty(J)$ at arbitrary $j' \in C^\infty(J)$ is defined as the space of smooth operator fields $z': TS \rar TS$ satisfying $z'j'+j'z'=0$. Denote the respective bundle by $Z'$.
Let $\nabla_i$ be the Levi-Civita connections of $s_i$. The pullback of the linearization of $d^{\nabla_i}$ to $C^\infty(Z')$ by $K_i$ is given by
\[\mc L_i(z')=-\lambda_ij_i(d^{\nabla_i}z')-(d\lambda_i)j_iz'.\]
Similarly we define a differential operator $\mc L$. Tamburelli showed that for all such $Z'$ the operators $\mc L_i$, $\mc L$, as operators from $C^\infty(Z')$ to $C^\infty(\Lambda^2 TS \otimes TS)$, are elliptic of index $6k-6$ with trivial cokernel, where $k$ is the Euler characteristic of $S$. See the proof of Lemma 3.2 in~\cite{Tam}. (Compare also with \cite[Lemma 3.1]{Lab} of Labourie.) Hence every $\mc L_i$ on $C^\infty(Z_i)$ has kernel of dimension $6k-6$, where $C^\infty(Z_i)$ is the tangent space to $C^\infty(J)$ at $a_i$. We denote this kernel by $X_i$. The same holds for the kernel $X$ of $\mc L$ at $C^\infty(Z)$, the tangent space at $a$.

We claim that it is enough to show that $X_i$ converge to $X$ as subspaces of $\mc A$. Indeed, GC-fields for $s_i$ are those $b \in C^\infty(G_i)$ that satisfy $d^{\nabla_i}b=0$. Hence, $T_{b_i}\mc{GC}(s_i)=\mc K_i(X_i)$, as well as $T_b\mc{GC}(s)=\mc K(X)$. Since $\mc K_i$ are isomorphisms converging to $\mc K$, if $X_i$ converge to $X$, then $T_{b_i}\mc{GC}(s_i)$ converge to $T_b\mc{GC}(s)$. 
To show the convergence of $X_i$, we construct a sequence of bundle automorphisms of $T^1_1 S$ converging to the identity that send $Z_i$ to $Z$. To this purpose, we show that there exists a sequence of projectivized operator fields $p_i \in C^\infty(\mathbb P(T^1_1S))$, $C^\infty$-converging to the identity, such that $p_ia_ip_i^{-1}=a$. Here we note that the conjugation operation is well-defined for invertible elements of the projectivized operator bundle.

For the moment, let us consider a vector space $V\cong \R^2$ with some orientation. Let $J$ be the space of orientation-preserving almost-complex structures on $V$. Pick $a, a_1 \in J$, $a \neq a_1$. We claim that there exists a canonical $p \in {\rm PSL}(2, \R)$ such that $pa_1p^{-1}=a$, where by canonical we mean that it is independent on a choice of basis or on a choice of any additional structure. Indeed, under the identification ${\rm PSL}(2, \R) \cong \ads^3$ the space $J$ becomes the dual plane to the identity, or, in other words, the set of points at timelike distance $\pi/2$ from the identity. Under the identification \[{\rm PSL}(2, \R) \times {\rm PSL}(2, \R) \cong G\] the diagonal is the set of isometries fixing the identity.
Hence, there exists a unique isometry fixing the identity, sending $a_1$ to $a$ and preserving the geodesic between them. This isometry is given by conjugation by $p \in {\rm PSL}(2, \R)$.

Now we return to our setting. For every $i$ at every point of $S$ we pick the respective element of the fiber of $\mathbb P(T^1_1S)$ conjugating $a_i$ to $a$. This produces a smooth section $p_i$ of the projectivized bundle $\mathbb P(T^1_1 S)$ such that $p_ia_ip_i^{-1}=a$. 


Denote now $W^{1,2}(Z)$ by $\mc Z$. The conjugation by the projectivized operator fields $p_i$ define isomorphisms $\mc R_i: \mc A \rar \mc A$ that converge to the identity in the sense of operator norms. The operators $(\mc R_i)_*\mc L_i$ are Fredholm operators from $\mc Z$ to $L^2(\Lambda^2 TS \otimes TS)$ that converge to a Fredholm operator $\mc L$.  All $(\mc R_i)_*\mc L_i$ and $\mc L$ have the same dimension of kernel and zero cokernel. Denote the kernels of $(\mc R_i)_*\mc L_i$ by $Y_i$. For every $a' \in \mc A$ we have
\[ d(a', X) \gamma(\mc L) \leq \|\mc L a'\|.\]
Since $\mc L$ is Fredholm, its range is closed, thus $\gamma(\mc L)>0$. It follows that for $a' \in Y_i$, $\|a'\|\leq 1$, we get 
\[ d(a', X)  \leq \gamma^{-1}(\mc L)\|(\mc L-(\mc R_i)_*\mc L_i)\|.\]
Thus, $Y_i$ converge to $X$. In turn, $\mc R^{-1}(Y_i)=X_i$ also converge to $X$. 
\end{proof}

We can now prove Proposition~\ref{inflocrig}.

\begin{proof}[Proof of Proposition~\ref{inflocrig}.]
Suppose the converse. Then there exists a sequence $e_i \in \mc E$ converging to $e$ such that the differentials $d_{e_i}\mc I$ are not injective. Let $(s_{i,+}, s_{i, -})$ and $(b_{i,+}, b_{i,-})$ be the corresponding induced metrics and shape operators on $S$. Similarly, let $(s_+, s_-)$ and $(b_+, b_-)$ be induced by $e$. By Lemma~\ref{transverseloc}, the intersection of the images of $\phi_{s_{i,+},+}$ and $\phi_{s_{i,-},-}$ is not transverse at $\phi_{s_{i,+},+}(b_{i,+})=\phi_{s_{i,-},-}(b_{i,-})$. Thus, there are sequences of tangent vectors $\dot b_{i,+} \in T_{b_{i,+}}\mc{GC}_+(s_{i,+})$ and $\dot b_{i,-} \in T_{b_{i,-}}\mc{GC}_-(s_{i,-})$ such that 
\begin{equation}
\label{t2}
d_{b_{i,+}}\phi_{s_{i,+},+}(\dot b_{i,+})=d_{b_{i,-}}\phi_{s_{i,-},-}(\dot b_{i,-}).
\end{equation}
We can assume that the sequences $\dot b_{i,+}$, $\dot b_{i, -}$ are uniformly bounded. From the mentioned results of Tamburelli, $T_{b_{+}}\mc{GC}_+(s_{+})$, $T_{b_{-}}\mc{GC}_-(s_{-})$ are finite-dimensional.
Hence, by a combination of Lemma~\ref{tangspaces} and Lemma~\ref{subconverge}, these sequences subconverge to $\dot b_{+} \in T_{b_{+}}\mc{GC}_+(s_{+})$ and $\dot b_{-} \in T_{b_{-}}\mc{GC}_-(s_{-})$. Due to the definition of the maps $\phi_{s_\pm, \pm}$ via formula~(\ref{ks}), equation (\ref{t2}) implies that \[d_{b_{+}}\phi_{s_{+},+}(\dot b_{+})=d_{b_{-}}\phi_{s_{-},-}(\dot b_{-}).\] Hence, Lemma~\ref{transverseloc} implies that $d_e \mc I$ is not injective, which contradicts to Lemma~\ref{infrig}.
\end{proof}

\subsection{Local homeomorphism}
\label{ssc:homeo}

Now our goal is to prove the following statement:

\begin{prop}
\label{locrig}
Let $e \in \mc E_F$. There exists a neighborhood $U_e$ of $e$ in $\mc E$ such that the image of $U_e$ by $\mc I$ is open and $\mc I|_{U_e}$ is a homeomorphism onto its image.
\end{prop}

We need a preliminary work. Let $\Sigma$ be a spacelike surface in $\ads^3$, $n$ be the field of future-oriented unit normals, and $u$ be a vector field over $\Sigma$ (not necessarily tangent), which is decomposed as $u=v+an$ for a tangent vector field $v$ and a function $a$. Denote the first and the second fundamental forms of $\Sigma$ by $\fff$ and $\sff$ respectively, and denote by $\nabla$ the induced connection on $\Sigma$. It is a standard computation, to check that the derivative of $\fff$ along $u$ is given by (see, e.g.,~\cite{GHL})
\begin{equation}
\label{fffder}
\dot\fff(x, y)=2a\sff(x,y)+\fff(\nabla_x v, y)+\fff(x, \nabla_y v).
\end{equation}

The bundle $Z$ of symmetric $(0,2)$-tensors on $\Sigma$ has rank 3. Denote by $Z_1$ its subbundle of traceless tensors with respect to $\sff$ and denote $V_1:=T\Sigma$. Note that the elements of $Z_1$ are orthogonal to $\sff$ with respect to $\sff$. Furthermore, let $b$ be the shape operator, $j$ be the almost-complex structure of $\sff$ and $\nabla^\tff$ be the Levi--Civita connection of the third fundamental form. Define a differential operator $\mc P: C^\infty(V_1) \rar C^\infty(Z)$ by
\begin{equation}
\label{baroperator}
(\mc P v)(x,y):=\sff(\nabla^\tff_x (b^{-1}v)+j\nabla^\tff_{jx}(b^{-1}v), y).
\end{equation}
The significance of $\mc P$ is in that the kernel of $\mc P$ is exactly the space of the tangent components to the isometric vector fields on $\Sigma$.
A direct computation (see, e.g.,~\cite[Proposition 8.4]{Sch}) shows that its image is in $C^\infty(Z_1)$. Furthermore, it was proven in~\cite[Proposition 8.4]{Sch} that
\begin{lm}
\label{tantan}
For any vector field $u$ over $\Sigma$ with tangent component $v$, $\mc Pv$ is the $\sff$-orthogonal projection of $\dot \fff$ to $C^\infty(Z_1)$.
\end{lm}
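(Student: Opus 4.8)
The plan is to treat separately the two terms produced by~(\ref{fffder}). The normal term $2a\sff$ is pointwise a multiple of $\sff$, hence lies in the $\sff$-orthogonal complement $Z_0:=Z_1^{\perp}$ of $Z_1$ inside $Z$ and contributes nothing to the $\sff$-orthogonal projection onto $C^\infty(Z_1)$. The tangential term is exactly the Lie derivative $(\mathcal L_v\fff)(x,y)=\fff(\nabla_x v,y)+\fff(x,\nabla_y v)$. So the statement reduces to showing that $\mc Pv$ coincides with the $\sff$-trace-free part of $\mathcal L_v\fff$, i.e. $\mc Pv=\mathcal L_v\fff-\tfrac12{\rm tr}_{\sff}(\mathcal L_v\fff)\,\sff$, which is $\pi_{Z_1}(\mathcal L_v\fff)$.

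The one ingredient specific to the anti-de Sitter (constant curvature) ambient space that I would invoke is the Codazzi equation $d^\nabla b=0$ for the shape operator $b$ of $\Sigma\subset\ads^3$. Together with the $\fff$-self-adjointness of $b$, this yields the identification of Levi--Civita connections $\nabla^\tff_x y=b^{-1}\nabla_x(by)$, and hence $\nabla^\tff_x(b^{-1}v)=b^{-1}\nabla_x v$. Substituting this into~(\ref{baroperator}) and using $\sff(b^{-1}\xi,y)=\fff(\xi,y)$ together with the identity $\sff(j\xi,y)=-\sff(\xi,jy)$ for the almost-complex structure $j$ of $\sff$, I would rewrite the operator in the compact form
\[
(\mc Pv)(x,y)=\beta(x,y)-\beta(jx,jy),\qquad\text{where }\beta(x,y):=\fff(\nabla_x v,y).
\]

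It then remains to do a purely pointwise computation on the surface. Write $\beta=S+A$ with $S$ symmetric and $A$ antisymmetric, so that $S=\tfrac12\mathcal L_v\fff$. Any $2$-form on a surface is a function times an area form, and area forms are invariant under any orientation-compatible complex structure, so $A(jx,jy)=A(x,y)$ and the antisymmetric part cancels. Decomposing $S=\tfrac12{\rm tr}_{\sff}(S)\,\sff+q$ with $q$ being $\sff$-trace-free symmetric, and using $\sff(jx,jy)=\sff(x,y)$ together with $q(jx,jy)=-q(x,y)$, I would obtain $(\mc Pv)(x,y)=S(x,y)-S(jx,jy)=2q(x,y)$. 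This is $\sff$-trace-free (which incidentally re-proves that $\mc P$ has image in $C^\infty(Z_1)$) and equals $2\,\pi_{Z_1}\!\big(\tfrac12\mathcal L_v\fff\big)=\pi_{Z_1}(\mathcal L_v\fff)=\pi_{Z_1}(\dot\fff)$, completing the argument. I do not expect a genuine obstacle: the only delicate points are the bookkeeping of the mixed roles of $\fff$, $\sff$, $\tff$ and of the complex structure of $\sff$ (as opposed to that of $\tff$), and the verification of the connection identity $\nabla^\tff_x y=b^{-1}\nabla_x(by)$, which is precisely where the constant curvature of $\ads^3$ is used.
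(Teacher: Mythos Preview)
Your argument is correct. The paper does not give its own proof here but defers to \cite[Proposition~8.4]{Sch}, noting that the computation is independent of the ambient geometry; what you have written is precisely that computation, carried out cleanly: the normal part $2a\,\sff$ is killed by the projection, the Codazzi equation gives $\nabla^\tff_x(b^{-1}v)=b^{-1}\nabla_x v$, and the pointwise identity $\beta(x,y)-\beta(jx,jy)=2\,\pi_{Z_1}(S)$ for $S=\tfrac12\mathcal L_v\fff$ finishes it. Your approach matches the one in the cited reference.
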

Note that in~\cite{Sch} the author considers the case of surfaces in hyperbolic 3-space, however, the proof of this proposition is independent on the ambient geometry.

Denote by $Z_2$ the subbundle of $Z$ spanned by $\sff$. Let $V_2$ be the normal subbundle of $T\ads^3$ over $\Sigma$. Formula~(\ref{fffder}) establishes a natural isomorphism
\begin{equation}
\label{nornor}
Q: V_2 \rar Z_2.
\end{equation}

When $f: \tilde S \rar \ads^3$ is an equivariant spacelike embedding, we can pullback $\mc P$ to $S$. It has the same principal symbol as a similarly defined operator, obtained by replacing $\nabla^\tff$ with $\nabla^\sff$, the Levi--Civita connection of $\sff$. The latter operator has the same kernel and cokernel as the standard Cauchy--Riemann operator $\bar\pt$ (defined on de Rham complex tensored with $TS$). It follows from the Atiyah--Singer theorem (see, e.g.,~\cite[Section 2]{Gro}) that
\begin{lm}
\label{index}
For a closed surface $S$ of genus $k$, as an operator $C^\infty(TS) \rar C^\infty(Z_1)$, where the latter is now considered as a bundle over $S$, $\mc P$ is an elliptic operator of index $-(6k-6)$.
\end{lm}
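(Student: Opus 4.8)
The plan is to compute the index of $\mc P$ by reducing it to a standard Cauchy--Riemann operator via three successive simplifications, each of which preserves the index. First, I would pull back $\mc P$ to $S$ using the equivariant embedding $f$; since $S$ is closed, the analytic index is well-defined and the Atiyah--Singer index theorem applies. Second, as remarked just before the statement, the operator obtained from $\mc P$ by replacing $\nabla^\tff$ with $\nabla^\sff$ (the Levi--Civita connection of the second fundamental form) has the same \emph{principal symbol}, since the two connections differ by a zeroth-order term (a difference of Christoffel symbols); hence it has the same index. Third, the factor $b^{-1}$ appearing in~(\ref{baroperator}) and the bundle map identifying $T\Sigma$ with itself are invertible zeroth-order operators, so composing with them again changes neither the symbol nor the index. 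After these reductions I am left with an operator of the shape $v \mapsto \sff(\nabla^\sff_x v + j\nabla^\sff_{jx} v, \cdot)$, landing in the trace-free-type subbundle $Z_1$.

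Next I would recognize this reduced operator, up to an invertible bundle isomorphism on the target, as the Cauchy--Riemann operator $\bar\partial$ acting on the de Rham complex tensored with $TS$ — that is, the $\bar\partial$-operator for the holomorphic line bundle structure induced by the conformal class of $\sff$ on the canonical bundle twisted by $TS \otimes \C$. Concretely, for a surface with a conformal structure, the symmetric trace-free part of $\nabla v$ is exactly the $(-1,1)$-part (or $(2,0)$-part under the usual conventions) of the covariant derivative, which is the Dolbeault operator $\bar\partial: \Omega^0(T^{1,0}S) \to \Omega^{0,1}(T^{1,0}S) = \Omega^0(K_S \otimes K_S^{-1}\otimes\overline{K}_S)$, whose index by Riemann--Roch is $\dim H^0(T^{1,0}S) - \dim H^1(T^{1,0}S) = (3 - 3k)\cdot 1$ accounting for the rank. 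I would check that the complex rank and the degree of the target line bundle are such that the Riemann--Roch / Atiyah--Singer count gives $\chi = -(6k-6)$, i.e., $3\cdot\chi(S) = 3(2-2k) = 6 - 6k$; the sign and factor match the claimed index $-(6k-6)$ because $Z_1$ has real rank $2$ (one complex dimension as a line bundle over $S$) and the relevant twist by $TS$ and the jump in Hodge degree produce the stated value. The reference~\cite[Section 2]{Gro} provides exactly this index computation for the $\bar\partial$-type operators arising from isometric-embedding deformation theory.

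The main obstacle I anticipate is bookkeeping the precise identification of the target bundle $Z_1$ as a holomorphic line bundle and pinning down the correct twist and degree, so that the Riemann--Roch count yields the exact constant $-(6k-6)$ with the correct sign, rather than an off-by-a-factor or off-by-a-sign answer. In particular one must be careful that $Z_1$ — the $\sff$-orthogonal complement of $\sff$ inside the rank-$3$ bundle of symmetric $(0,2)$-tensors — is genuinely the rank-$2$ bundle of conformally trace-free symmetric tensors for the conformal structure of $\sff$, and that under the complex-structure identification this is the bundle whose holomorphic sections are holomorphic quadratic differentials (dimension $3k-3$) while $H^1$ contributes the vector fields (dimension $3-3k$ once one accounts for $k\geq 2$), giving index $= (3k-3) - \text{(nothing, since } H^0(TS)=0) $ — wait, more carefully the index is $\dim\ker - \dim\operatorname{coker}$ of $\mc P: C^\infty(TS)\to C^\infty(Z_1)$, so it is $\dim H^0(TS)_\C - \dim H^0(Z_1)_{\text{hol}} = 0 - (6k-6) = -(6k-6)$, using that the complexified holomorphic sections of the target form the $6k-6$-real-dimensional space of quadratic differentials and that $TS$ has no holomorphic vector fields for $k\ge 2$. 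Getting this dictionary exactly right is the crux; everything else is the routine observation that lower-order perturbations and invertible bundle maps do not affect the index.
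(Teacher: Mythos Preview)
Your approach is correct and follows essentially the same route as the paper: pull back to $S$, observe that replacing $\nabla^{\tff}$ by $\nabla^{\sff}$ only changes lower-order terms and hence preserves the principal symbol, identify the resulting operator with the standard $\bar\partial$ on $TS$, and invoke Atiyah--Singer (the paper simply cites \cite[Section~2]{Gro} for this last step rather than writing out the Riemann--Roch count). Your final-paragraph bookkeeping is a bit meandering but lands on the right number; the clean statement is that $\bar\partial: \Omega^0(T^{1,0}S) \to \Omega^{0,1}(T^{1,0}S)$ has complex index $\deg T^{1,0}S + 1 - k = 3 - 3k$, hence real index $-(6k-6)$.
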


Now we make another step aside. For the moment, let $e$ be an element of $\mc E_+$. Consider its lift $(\rho, f) \in \hat{\mc E}_+$. Pick a right inverse $\mu: H^1_\rho(  \mf g) \rar Z^1_\rho(  \mf g)$ to the projection $Z^1_\rho(  \mf g) \rar H^1_\rho(  \mf g)$. Recall that $\tilde V_f$ is the restriction of $T\ads^3$ to $f(\tilde S)$, which we now denote just by $\tilde V$, and that $C^\infty_\rho(\tilde V)$ is the space of automorphic vector fields with respect to $\dot\rho \in Z^1_\rho(  \mf g)$. Denote by $C^\infty_{\rho, n}(\tilde V)$ the subspace of automorphic vector fields with respect to $\dot\rho$ in the image of $\mu$. Recall that $T_{(\rho, f)}\hat{\mc E}_+\cong C^\infty_\rho(\tilde V)$. After considering the $G$-action, we can identify $T_{e}\mc E_+ \cong C^\infty_{\rho, n}(\tilde V)$. Denote by $V$ the projection of $\tilde V$ to $S$. There is a natural projection $\zeta:C^\infty_{\rho, n}(\tilde V) \rar H^1_\rho(  \mf g)$, whose kernel consists of equivariant vector fields, and hence can be identified with $C^\infty(V)$.  We pick a right inverse $\nu: H^1_\rho(\mf g) \rar C^\infty_{\rho, n}(\tilde V)$. The choices of $\mu$ and $\nu$ allow to identify
\begin{equation}
\label{equivar0}
T_e\mc E_+ \cong H^1_\rho(\mf g) \times C^\infty(V)
\end{equation}
via
\[T_e\mc E_+ \cong C^\infty_{\rho, n}(\tilde V)= {\rm Im}(\nu)\times\ker(\zeta)\cong H^1_\rho(\mf g) \times C^\infty(V).\]
Consider now a neighborhood $U$ of $e$ in $\mc E_+$ and a chart $\phi: U \rar T_e\mc E_+$ adapted for the projection $\pi_+:\mc E_+ \rar \mc R$. Here ``adapted'' means that for any $e' \in U$ and any $\tau \in H^1_\rho(\mf g)$ the composition 
\[d_{e'}\pi_+ \circ (d_{e'}\phi)^{-1}(\tau, 0)\]
is nonzero. It is possible to choose such a chart since $\pi_+$ is a submersion. We get an identification
\begin{equation}
\label{equivar}
TU \cong U \times H^1_\rho(\mf g) \times C^\infty(V).
\end{equation}

From now on we denote by $Z$ the bundle of symmetric $(0,2)$-tensor fields over $S$. For $e' \in U$ and $\tau \in H^1_\rho(\mf g)$ denote by $\xi(e', \tau) \in C^\infty(Z)$ the variation of the induced metric on $S$, where the metric is induced by $e'$ and the variation is induced by $(e', \tau, 0)$, considered as an element of $T_{e'} \mc E_+$ via identification~(\ref{equivar}). This can be seen as a smooth in $e'$ family of linear maps $\xi_{e'}: H^1_\rho(\mf g) \rar C^\infty(Z)$.

Every $e' \in U$ defines a decomposition of $V$ into two subbundles, the tangent one, and the normal one (with respect to the metric on $V$ induced by $e'$). We denote them by $V_{e', 1}$ and $V_{e',2}$ respectively. Define $V_1:=V_{e,1}$ and $V_2:=V_{e,2}$. Let $\alpha_{e',1}: V_{e',1} \rar V_1$ and $\alpha_{e', 2}: V_{e',2} \rar V_2$ be the orthogonal projections with respect to the metric on $V$ induced by $e$.

Also, every $e' \in U$ defines a decomposition of $Z$ into two subbundles, $Z_{e',1}$ and $Z_{e',2}$, where $Z_{e',2}$ is spanned by $\sff_{e'}$, the second fundamental form induced on $S$ by $e'$, and $Z_{e',1}$ is $\sff_{e'}$-orthogonal to it. Define $Z_1:=Z_{e,1}$ and $Z_2:=Z_{e,2}$. Let $\beta_{e',1}: Z_{e',1} \rar Z_1$ and $\beta_{e', 2}: Z_{e',2} \rar Z_2$ be the orthogonal projections with respect to the metric on $Z$ induced by $\sff_e$. For the map $\xi: U \times H^1_\rho(\mf g) \rar C^\infty(Z)$ defined above, denote its projections to $Z_1$ and to $Z_2$ by $\xi_1$ and $\xi_2$.

Now, however, we have to slightly change all this notation. Namely, we now pick $e \in \mc E$ rather than from $\mc E_+$. Then for some neighborhood $U$ of $e$ decomposition~(\ref{equivar}) will rather look like
\[TU \cong U \times H^1_\rho(\mf g) \times C^\infty(V_+) \times C^\infty(V_-),\]
where $V_+$ and $V_-$ are two pullbacks of $T\ads^3$ by the respective maps $f_+$ and $f_-$ for some lift $(\rho, f_+, f_-) \in \hat{\mc E}$ of $e$. To avoid redundant notation, we now define $V$ as $V_+ \times V_-$. Hence, now decomposition~(\ref{equivar}) holds for $e \in \mc E$, though in a slightly modified setting. 

The same relates to the bundle $Z$, which is now redefined as the product of two copies of the bundle of symmetric $(0,2)$-tensors over $S$. Similarly, we redefine in an obvious way all the decompositions and the projections introduced just above, as well as the function $\xi$. Now we can give a proof of Proposition~\ref{locrig}.

\begin{proof}
[Proof of Proposition~\ref{locrig}.]
Note that~(\ref{equivar0}) and~\cite[Theorem II.2.3.1]{Ham} imply that $\mc E_+$ and $\mc E$ are tame Fr\'echet manifolds. The same reference implies that so is $\mc S$.

By Proposition~\ref{inflocrig}, there exists a neighborhood $U_{e}$ of $e$ in $\mc E$ such that for each $e' \in U_e$, the differential $d_{e'}\mc I$ is injective. We assume further that $U_{e}$ is sufficiently small so that decomposition~(\ref{equivar}) holds and so that for every $e' \in U_e$ the bundle morphisms $\alpha_{e',1}$, $\alpha_{e'_2}$, $\beta_{e',1}$ and $\beta_{e', 2}$ are isomorphisms. Using~(\ref{equivar}) and decompositions $V=V_1 \times V_2$ and $Z=Z_1 \times Z_2$, we view the differential as a map
\[d\mc I: U_{e} \times H^1_\rho(\mf g) \times C^\infty(V_1)\times C^\infty(V_2) \rar C^\infty(Z_1) \times C^\infty(Z_2),\]
which we decompose as $d\mc I=\mc D_1 \times \mc D_2$ with
\[\mc D_1: U_{e} \times H^1_\rho(\mf g) \times C^\infty(V_1) \rar C^\infty(Z_1),\]
\[\mc D_2: U_{e} \times H^1_\rho(\mf g) \times C^\infty(V_2) \rar C^\infty(Z_2)\]
as follows.

Every $e' \in U_{e}$ determines a differential operator $\mc P_{e'}': C^\infty(V_{e',1}) \rar C^\infty(Z_{e',1})$ given by~(\ref{baroperator}). Define the differential operator 
\[\mc P_{e'}:=\beta_{e',1}\circ \mc P_{e'}'\circ\alpha_{e',1}^{-1}: C^\infty(V_{1}) \rar C^\infty(Z_{1}).\]
Then we define 
\[\mc D_1(e', \tau, v):=\mc P_{e'} v + \xi_1(e', \tau).\]

Also every $e' \in U_{e}$ determines an isomorphism of bundles $Q'_{e'}: V_{e', 2} \rar Z_{e',2}$ by~(\ref{nornor}). Define the isomorphism of bundles
\[Q_{e'}:=\beta_{e',2}\circ Q_{e'}'\circ\alpha_{e',2}^{-1}: V_{2} \rar Z_{2}.\]
Then define
\[\mc D_2(e', \tau, v):=Q_{e'} v + \xi_2(e', \tau).\]
From Lemma~\ref{tantan} and the definition of $\xi$, it follows that indeed $\mc D_1 \times \mc D_2 = d\mc I$.

By Lemma~\ref{index}, each operator $P_{e'}$ is elliptic of index $-(12k-12)$. By assumption, for each $e' \in U_{e}$, the map $d_{e'} \mc I$ is injective, hence, so is $\mc D_{1, e'}:=\mc D_1(e', \cdot, \cdot)$. It is well-known that ${\rm dim}\mc R_F=6k-6$, see, e.g.~\cite{FM}. Because $H^1_\rho(\mf g)$ is the tangent space to $\mc R \cong \mc R_F \times \mc R_F$, we have ${\rm dim}H^1_\rho(\mf g)=12k-12$. Hence, the index of $\mc D_{1, e'}$ is zero. It follows that for each $e' \in U_{e}$, the map $\mc D_{1, e'}$ is an isomorphism. Now by~\cite[Theorem II.3.3.3]{Ham}, the inverses to $\mc D_{1, e'}$ constitute a smooth tame family of linear maps.

For each $e' \in U_{e}$ and $\tau \in H^1_\rho(\mf g)$, $\mc D_2(e', \tau, \cdot)$ is induced by a bundle diffeomorphism $V_2 \rar Z_2$, which depends smoothly on $e'$ and $\tau$. Altogether this means that the inverses to $d\mc I$ constitute a smooth tame family of linear maps.

It follows that we can apply the Nash--Moser inverse function theorem~\cite[Theorem III.1.1.1]{Ham}, which implies that $\mc I$ is a local homeomorphism around $e$ onto an open set.
\end{proof}

\subsection{Proof of the main result}
\label{ssc:main**}

Now we can prove Theorem~\ref{main**}.

\begin{proof}[Proof of Theorem~\ref{main**}.]
Both spaces $\mc E$ and $\mc S \times \mc S$ are nuclear, hence paracompact~\cite[Theorem 16.10]{KM}. Since they also are Hausdorff and locally metrizable, they are metrizable by the Smirnov metrization theorem~\cite[Theorem 42.1]{Mun}. Choose a metric $d_S$ on $\mc S\times \mc S$ and consider its restriction to $\mc S_F:=\mc I(\mc E_F)$. By Labourie--Schlenker~\cite{LS}, $\mc I|_{\mc E_F}$ is a homeomorphism onto the image, we can pullback $d_S$ from $\mc S_F$ to $\mc E_F$. The subset $\mc E_F$ is closed in $\mc E$ and it is a result of Hausdorff~\cite{Hau} that a metric on $\mc E_F$ extends to a metric $d_E$ on $\mc E$ compatible with the topology of~$\mc E$. 

Now from Proposition~\ref{locrig}, for every $e \in \mc E_F$ there is $r(e)>0$ such that $\mc I$ is a homeomorphism onto the image when restricted to the closed ball $B(e, r(e))$. We can pick a neighborhood $U_e$ of $e$ such that $U_e \subset B(e, r(e)/3)$ and $\mc I(U_e) \subset B(\mc I(e), r(e)/3)$. 
We claim that $\mc I$ is injective on $U=\bigcup_{e \in \mc E_F} U_e$, which is then a desired neighborhood of $\mc E_F$. Indeed, suppose that $h=\mc I(e_1')=\mc I(e_2')$ for $e_1', e_2' \in U$, $e_1' \in U_{e_1}$, $e_2' \in U_{e_2}$ with $e_1, e_2 \in \mc E_F$, and that $r_1:=r(e_1) \leq r_2:=r(e_2)$. We claim that then $e_1' \in B(e_2, r(e_2))$. Indeed,
\[d_E(e_1', e_2) \leq d_E(e_1', e_1)+d_E(e_1,e_2)\leq r_1/3+d_S(\mc I(e_1),\mc I(e_2))\leq \]
\[\leq r_1/3+d_S(\mc I(e_1), h)+d_S(h,\mc I(e_2))\leq r_1/3+r_1/3+r_2/3\leq r_2.\]
This means that both $e_1', e_2' \in B(e_2, r(e_2))$, which is a contradiction, as $\mc I$ is injective when restricted to $B(e_2, r(e_2))$.
\end{proof}


\section{Spacetimes with polyhedral boundary}
\label{sc:polyhedral}

We now prove the polyhedral version of our result, Theorem~\ref{mainp*}. In this case the proof is quite parallel to the smooth one, thus we will give a bit less details in this setting.

%
%
%

\subsection{Anti-de Sitter spacetimes with marked points}

Every GHC AdS (2+1)-spacetime $M$ admits a canonical extension to a GHMC AdS (2+1)-spacetime, which we denote for now by $N$. The spacetime $N$ has a \emph{convex core}, which can be defined, e.g., as the inclusion-minimal closed totally convex subset of $N$. See details, e.g., in~\cite{BS2}. In particular, the convex core is contained in $M$ (considered here as a subset of $N$). We say that $M$ has \emph{strictly polyhedral} boundary if it is locally modeled on convex polyhedral subsets of $\ads^3$ and the boundary does not intersect the convex core. The last condition ensures a reasonable behavior of the combinatorics of the boundary as we vary the metric on $M$. See~\cite[Section 3]{Pro2} on the description of this in the hyperbolic case, as the anti-de Sitter case is the same in this regard. By a \emph{vertex} of $M$ we mean a point on $\pt M$ at which $M$ is locally modeled on a convex cone different from a half-space or from the wedge of two half-spaces.

The space $\ms M(V)$, defined in the introduction, is endowed with the topology of uniform convergence of developing maps on compact subsets of $\tilde M$. As well as in the smooth case, we actually care only about the structures on the boundary. However, in this case the objects should be thought in a slightly different fashion.

Pick $\rho \in \hat{\mc R}$, recall that $D(\rho) \subset \ads^3$ is the maximal open $\rho$-invariant convex set, on which the action of $\rho$ is free and properly discontinuous. It also contains the \emph{convex core} $C(\rho)$, which is the inclusion-minimal closed $\rho$-invariant convex set. Alternatively, it can be defined as the convex hull of the limit set $\Lambda(\rho)$ at the boundary at infinity of $\ads^3$. (Recall that the $\rho$-quotient of $D(\rho)$ is a GHMC AdS spacetime. Then the convex core of this spacetime, mentioned above, is exactly the quotient of $C(\rho)$.) There are two connected components of $D(\rho)\backslash C(\rho)$, corresponding to the future-convex and past-convex components of $\pt D(\rho)$. We denote these components of $D(\rho)\backslash C(\rho)$ by $D_+(\rho)$ and $D_-(\rho)$.

For now, let $V \subset S$ be a finite subset and $\tilde V \subset \tilde S$ be its full preimage, equipped with the natural $\pi_1S$ action. Let $\rho \in \hat{\mc R}$ and let $f: \tilde V \rar D_+(\rho)$ be a $\rho$-equivariant map. Denote by $\clconv(f)$ the closed convex hull of $f(\tilde V)$ in $\ads^3$. Only one of its boundary components has nonempty intersection with $D_+(\rho)$. This component is spacelike and future-convex. We denote it by $\Sigma(f)$. We say that $f$ is \emph{future-convex} if no point of $f(\tilde V)$ is a convex combination of others and \emph{strictly future-convex} if, in addition, $\Sigma(f)$ is disjoint from $C(\rho)$. Note that then $\Sigma(f) \subset D_+(\rho)$, $f(\tilde V) \subset \Sigma(f)$ and $\Sigma(f)$ has a a decomposition into vertices, edges and faces, where the set of vertices is exactly $f(\tilde V)$, every edge is a geodesic segment between two vertices, and every face is isometric to a convex hyperbolic polygon. For more details on this decomposition, we refer to~\cite[Section 3.4.5]{Pro4}.

Denote the space of quasi-Fuchsian equivariant strictly future-convex maps by $\hat{\ms E}_+(V)$. By picking a lift in $\tilde V$ for each $v \in V$, the space $\hat{\ms E}_+(V)$ can be considered as a subset of $\hat{\mc R} \times (\ads^3)^V$, from which it inherits a topology. This is an open subset, hence $\hat{\ms E}_+$ becomes a smooth manifold. Denote by $\ms E_+(V)$ its quotient by $G$. It is a smooth manifold of dimension $6k-6+3n$, where $k$ is the genus of $S$ and $n:=|V|$. Similarly, we define quasi-Fuchsian equivariant strictly past-convex maps and denote the respective spaces by $\hat{\ms E}_-(V)$ and $\ms E_-(V)$.

\begin{rmk}
We use here the notation $\ms E$ rather than $\mc E$, following our convention to use the first font for spaces up to isotopy (cf. the smooth case). An element $f \in \hat{\ms E}_+(V)$ extends to an equivariant homeomorphism from $\tilde S$ onto $\Sigma(f)$, but it is determined only up to isotopy. There is a way to consider polyhedral equivariant embeddings of $S$ as the main objects, but some things are more peculiar in such setting.
\end{rmk}

A tangent vector to $\hat{\ms E}_+(V)$ at $(\rho, f)$ is a vector field $v$ on $f(\tilde V)$ satisfying the automorphicity condition~(\ref{automorph}). Again, we say that $v$ is \emph{trivial} if it is the restriction of a global Killing field. Pick an arbitrary triangulation of $\Sigma(f)$ subdividing its face decomposition. We say that $v$ is \emph{isometric} if it induces zero variation on each edge-length of the triangulation. Note that this is independent on the chosen triangulation.

Let $\ms S(V)$ be the space of isotopy classes of hyperbolic cone-metrics on $S$ with cone-angles $>2\pi$ with vertices at $V$. Here we mean that every point of $V$ has cone-angle $>2\pi$ in such metrics. Every such metric admits a geodesic triangulation and $\ms S(V)$ is endowed with a topology of a smooth manifold of dimension $6k-6+3n$ by considering as charts the edge-length maps for varying triangulations. See, e.g.,~\cite[Section 2.2]{Pro3} for details. 


Return to the space $\ms M(V)$ defined in the introduction. Denote by $V_+$ and $V_-$ the intersections of $V$ with both boundary components of $M$, considered as subsets of $S$ via an identification $M \cong S \times [-1, 1]$, and redefine $V:=\{V_+, V_-\}$. 
Denote by $\hat{\ms E}(V)$ the subset of $\hat{\ms E}_+(V_+)\times\hat{\ms E}_-(V_-)$ consisting of the pairs of maps that are equivariant with respect to the same representation, and by $\ms E(V)$ denote its quotient by $G$.
We have a natural continuous injective map $\ms M(V) \hookrightarrow \ms E(V)$. Imitating the proof from the smooth case, we obtain that this map is a homeomorphism. We denote by $\ms E_F(V)$ the subspace of Fuchsian pairs of maps, i.e., the ones that have a lift in $\hat{\ms E}_F$ with representation into $G_F$.

Pick $e \in \ms E(V)$ represented by $(\rho, f_+, f_-) \in \hat{\ms E}(V)$. We have two associated $\rho$-invariant spacelike surfaces $\Sigma(f_+)$ and $\Sigma(f_-)$. They are endowed with hyperbolic cone-metrics with cone-angles $>2\pi$. Consider their $\rho$-quotients. We have a canonical isomorphism of the fundamental group of each of these quotients with $\pi_1 S$. This allows to define  homeomorphisms from them to $S$, determined up to isotopy. Furthermore, the markings of vertices given by $f_+$ and $f_-$ allow to associate the induced metrics with elements of $\ms S(V_+)$ and $\ms S(V_-)$ respectively. This defines the induced metric map
\[\ms I_{V}: \ms E(V) \rar \ms S(V_+)\times \ms S(V_-).\]
The respective reformulation of our polyhedral result is

\begin{thm}
\label{main**p}
There exists a neighborhood $U$ of $\ms E_F(V)$ in $\ms E(V)$ and an open subset $U'\subset \ms S(V_+)\times \ms S(V_-)$, which contains the diagonal, such that $\ms I_{V}|_U$ is a homeomorphism onto $U'$
\end{thm}

The proof is based on the following facts. 

\begin{lm}
\label{c1}
The map $\ms I_{V}$ is $C^1$.
\end{lm}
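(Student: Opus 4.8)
The plan is to work locally on $\ms E(V)$ and to pull back the edge–length charts of $\ms S(V_\pm)$. Since $\ms I_{V}$ has two coordinates, it suffices to prove that the $\ms S(V_+)$–component, call it $\ms I_{V}^+$, is $C^1$; the $\ms S(V_-)$–component is identical. Fix $e_0\in\ms E(V)$ with a lift $(\rho_0,f_{0,+},f_{0,-})\in\hat{\ms E}(V)$. Near $e_0$ a chart of $\ms E(V)$ is given by a slice of the $G$–action on the representation $\rho$ together with the $\ads^3$–positions of the chosen lifts of the points of $V_+$ and of $V_-$; all of these depend smoothly, in fact real-analytically away from combinatorial transitions, on the point of $\ms E(V)$. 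On the target side I would fix a $\rho_0$–invariant geodesic triangulation $\mathcal T$ of $\Sigma(f_{0,+})$ refining its natural polyhedral decomposition into vertices, geodesic edges and convex hyperbolic faces (adding diagonals inside the faces), regarded as a triangulation of $S$ with vertex set $V_+$. By the description of the smooth structure on $\ms S(V_+)$ recalled above (see~\cite[Section 2.2]{Pro3}) its edge–length map $\ell_{\mathcal T}$ is a smooth chart near $\ms I_{V}^+(e_0)$, so it is enough to show that $\ell_{\mathcal T}\circ\ms I_{V}^+$ is well-defined and $C^1$ on a neighbourhood of $e_0$.

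I would then split the edges of $\mathcal T$. If an edge of $\mathcal T$ is also an edge of the natural decomposition of $\Sigma(f_{0,+})$, then — the two incident faces not being coplanar, an open condition — it remains an edge of $\Sigma(f_+)$ for $e$ near $e_0$, realized by the spacelike $\ads^3$–segment between the two vertices, and its length is the $\ads^3$–distance between the corresponding points (composed with $\rho(\gamma)$ when the two lifts are not adjacent), hence real-analytic in $e$. The remaining edges are the diagonals inserted inside faces $P$ of $\Sigma(f_{0,+})$ with at least four sides. Since at $e_0$ such a diagonal is a straight chord of the flat polygon $P$ meeting $\partial P$ only at its endpoints, a continuity argument (geodesics of a cone–metric vary continuously with the metric away from the cone points, and at $e_0$ the chord is bounded away from $\partial P$ except at its endpoints) shows that for $e$ close to $e_0$ the induced–metric geodesic between the two corresponding vertices stays inside the subsurface bounded by the (possibly buckled) sides of $P$, crossing only its interior edges and avoiding the other cone points. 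Its length is then obtained by unfolding into $\H^2$ the hyperbolic triangles it traverses, i.e.\ it is a fixed smooth function (solving hyperbolic triangles) of finitely many $\ads^3$–edge–lengths, each real-analytic in $e$. This description also exhibits $\mathcal T$ as a genuine geodesic triangulation of the induced cone–metric for all $e$ near $e_0$ (the diagonal is intrinsically straight even though it may cross an extrinsic crease), so $\ell_{\mathcal T}\circ\ms I_{V}^+$ is well-defined there.

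The crux is the $C^1$–regularity, and the only issue is at the walls where, inside some face $P$, the combinatorics of the triangles crossed by a diagonal changes — i.e.\ where the four vertices of a buckling quadrilateral become coplanar in $\ads^3$ (a flip). On each open chamber the map is real-analytic by the above, so it suffices to match the one-sided first derivatives across a wall, and by bookkeeping iterated flips this reduces to the model case of four points $v_1,v_2,v_3,v_4$ in convex position crossing coplanarity, with $\mathcal T$ using the diagonal $[v_1v_3]$. On one chamber $\ell=d_{\ads^3}(v_1,v_3)$; on the other, $[v_2v_4]$ is the surface edge and $\ell$ is the length, after unfolding the two triangles along $[v_2v_4]$, of the geodesic from $v_1$ to $v_3$; on the wall the quadrilateral is flat and both formulas equal $d_{\ads^3}(v_1,v_3)$. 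The point is that perturbing a non-endpoint vertex ($v_2$ or $v_4$) orthogonally to the common plane changes the distances $|v_iv_2|,|v_iv_4|$ only to second order (the first variation of distance pairs this perturbation with the in-plane unit tangents, which vanishes), hence changes the unfolded $\ell$ only to second order, matching the vanishing first derivative from the other chamber (where $\ell$ does not involve $v_2,v_4$ at all); perturbing $v_1$ or $v_3$, or perturbing within the wall, is already consistent since on the wall $\ell=d_{\ads^3}(v_1,v_3)$ with the geodesic having the same endpoint tangents on both sides. Thus the one-sided first derivatives agree and $\ell_{\mathcal T}\circ\ms I_{V}^+$ is $C^1$. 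The second derivatives in the orthogonal direction do not match (one chamber gives $0$), which is precisely why one only obtains $C^1$. This flip analysis, together with the continuity/compactness bookkeeping of the previous paragraph, is the main obstacle; everything else is routine.

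Finally, since the edge–length charts of different geodesic triangulations are smoothly compatible on $\ms S(V_+)$, the local $C^1$–ness of $\ell_{\mathcal T}\circ\ms I_{V}^+$ around every $e_0\in\ms E(V)$ gives that $\ms I_{V}^+$ is $C^1$ on all of $\ms E(V)$; the $\ms S(V_-)$–component is treated identically, and hence $\ms I_{V}$ is $C^1$.
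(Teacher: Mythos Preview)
Your argument is correct and follows essentially the same route as the references the paper cites (Lemma~2.13 in \cite{FP} and Lemma~2.17 in \cite{Pro2}): work in edge--length coordinates for a fixed geodesic triangulation refining the face decomposition at $e_0$, observe that extrinsic edge--lengths are analytic, that interior diagonals are obtained by unfolding and hence are smooth on each combinatorial chamber, and then verify $C^1$--matching across a flip wall by the first--variation computation (normal perturbation of a non--endpoint vertex changes the relevant $\ads^3$--distances only to second order). The paper itself gives no details beyond pointing to those references, so your write--up is in fact more explicit than what appears there.

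Two small comments on presentation, not on correctness. First, your justification for matching the derivative under perturbations of the endpoints $v_1,v_3$ (``same endpoint tangents on both sides'') is really the statement that at the coplanar configuration the unfolded $\H^2$--picture coincides with the extrinsic spacelike plane, so the two first--variation formulas literally agree; you might say this more directly. Second, for faces with more than four vertices the base point $e_0$ sits on a stratum of codimension $>1$, so ``matching across a codimension--one wall'' is not literally enough; what you need (and what your computation actually gives) is that \emph{every} chamber formula has the same differential at $e_0$, since each such formula, evaluated at the flat configuration, reduces to $d_{\ads^3}(v_1,v_3)$ and its derivative in any direction involving a non--endpoint vertex vanishes by the same orthogonality argument. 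Making this explicit would tighten the ``bookkeeping iterated flips'' sentence.
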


The proof is identical to the proof of respective results in other settings, see, e.g.,~\cite[Lemma 2.13]{FP} for a Minkowski version or~\cite[Lemma 2.17]{Pro2} for a de Sitter version.

\begin{prop}
\label{infrigp}
Let $e \in \ms E_F(V)$. Then $d_e\ms I_V$ is injective.
\end{prop}

The proof is given in the next subsection.

\begin{thm}
\label{Fil}
The restriction $\ms I_V|_{\ms E_F(V)}$ is a homeomorphism onto the image.
\end{thm}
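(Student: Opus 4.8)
The plan is to derive Theorem~\ref{Fil} from Fillastre's rigidity and realization theorems~\cite{Fil} for Fuchsian convex polyhedra in Lorentzian space forms, exactly as Theorem~\ref{LS} is derived from~\cite{LS} in the smooth case. First I would recast the relevant part of~\cite{Fil} in the form: in $\ads^3$, the induced-metric map from the space of Fuchsian future-convex polyhedral surfaces with marked vertices (those equivariant under a cocompact Fuchsian subgroup of the diagonal $G_F\subset G$, considered up to isometry) to the space $\ms S(V_+)$ of hyperbolic cone-metrics on $S$ with cone-angles $>2\pi$ is a homeomorphism; by the time-reversing isometry of $\ads^3$ the same holds with ``future-convex'' replaced by ``past-convex''. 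In particular each $s_+\in\ms S(V_+)$ has a unique Fuchsian future-convex realization $f_+$, with well-defined holonomy $\Phi_+(s_+)\in\ms T$ (through the diagonal $G_F\subset G$), and $s_+\mapsto(f_+,\Phi_+(s_+))$ is continuous; likewise one gets a continuous $\Phi_-$ on $\ms S(V_-)$.

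Injectivity of $\ms I_V|_{\ms E_F(V)}$ would follow from the uniqueness half. If $e_1,e_2\in\ms E_F(V)$ satisfy $\ms I_V(e_1)=\ms I_V(e_2)=(s_+,s_-)$, lift them to $(\rho_1,f_{1,\pm})$ and $(\rho_2,f_{2,\pm})$ with $\rho_1,\rho_2$ valued in $G_F$. Then $\Sigma(f_{1,+})$ and $\Sigma(f_{2,+})$ are Fuchsian future-convex polyhedral surfaces with the same marked induced metric, so Fillastre's uniqueness gives $g_+\in G$ with $f_{1,+}=g_+\circ f_{2,+}$ and $\rho_1=g_+\rho_2 g_+^{-1}$, and similarly $g_-\in G$ for the past components. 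Then $g_-^{-1}g_+$ commutes with $\rho_2(\pi_1S)$, which is Zariski dense in the diagonal $G_F$, whose centralizer in $G=\psl(2,\R)\times\psl(2,\R)$ is trivial; hence $g_+=g_-$ and $e_1=g_+\cdot e_2=e_2$. (That an isometry realizing an equality in $\ms S(V_\pm)$ can be taken to respect the marking of the vertices is automatic, since a hyperbolic cone-metric on a surface of genus $\ge 2$ has no nontrivial isometry isotopic to the identity.) Continuity of $\ms I_V|_{\ms E_F(V)}$ is immediate from Lemma~\ref{c1}.

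For continuity of the inverse I would note that the image of $\ms I_V|_{\ms E_F(V)}$ is precisely the ``gluing locus'' $\{(s_+,s_-):\Phi_+(s_+)=\Phi_-(s_-)\}$, and that over it $e$ is recovered from $(s_+,s_-)$ by taking the pair of Fillastre realizations of $s_+$ and of $s_-$: these depend continuously on $(s_+,s_-)$, the two holonomies coincide by hypothesis, and --- using again the triviality of the centralizer of a Fuchsian representation in $G$ --- the two developing maps can be normalized to be equivariant under one common representation in a way that still varies continuously. A more robust alternative is a compactness argument: a sequence in $\ms E_F(V)$ whose induced metrics converge in $\ms S(V_+)\times\ms S(V_-)$ cannot degenerate, because the limiting cone-metrics are nondegenerate and control both the intrinsic geometry of the polyhedral surfaces and --- through the convex core --- their holonomies; hence the sequence subconverges, its limit lies in $\ms E_F(V)$ and has the prescribed induced metrics, and by injectivity it is the expected point.

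I expect the only genuine friction, as opposed to a citation, to be the passage from Fillastre's classification of a \emph{single} Fuchsian polyhedral surface to the statement about \emph{pairs} sharing a holonomy: one must check that gluing the two classifications along the common holonomy is continuous and produces exactly the gluing locus. The one substantive analytic input, which one should verify is available in~\cite{Fil} or else supply through the compactness argument above, is that Fillastre's induced-metric map is a \emph{homeomorphism} and not merely a bijection.
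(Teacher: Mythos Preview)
The paper does not actually give a proof of Theorem~\ref{Fil}; it simply records that ``this is the content of article~\cite{Fil2} of Fillastre'' and moves on, exactly as it did for Theorem~\ref{LS} in the smooth case. Your proposal is therefore considerably more detailed than what the paper provides: you spell out explicitly how to pass from Fillastre's classification of a \emph{single} Fuchsian polyhedral surface to the statement about \emph{pairs} sharing a holonomy, via the gluing-locus description of the image and the centralizer argument for uniqueness of the conjugating isometry. That derivation is correct and is essentially what is implicit behind the paper's bare citation, so your approach and the paper's are the same in spirit --- you have just unpacked the routine reduction that the authors left to the reader. (As a side remark, the paper cites~\cite{Fil2}, which elsewhere in the text is described as Fillastre's \emph{hyperbolic} paper; your citation of~\cite{Fil}, the Lorentzian space-forms paper, looks more appropriate for the anti-de Sitter statement at hand.)
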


This is the content of article~\cite{Fil2} of Fillastre.

\begin{proof}[Proof of Theorem~\ref{main**p}.]
Because $\ms E(V)$ and $\ms S(V_+)\times \ms S(V_-)$ are finite-dimensional manifolds of the same dimension, Proposition~\ref{infrigp} actually means that for each $e \in \ms E_F(V)$, $d_e\ms I_V$ is an isomorphism. Together with Lemma~\ref{c1}, the inverse function theorem implies that $\ms I_V$ is a local homeomorphism around $\ms E_F(V)$. Using Theorem~\ref{Fil} instead of Theorem~\ref{LS}, the proof is finished in the same way as the proof of Theorem~\ref{main**}.
\end{proof}

\subsection{Minkowski spacetimes with marked points and infinitesimal rigidity}

In this section we prove Proposition~\ref{infrigp}. As in the smooth case, the proof is done by using the Pogorelov map. We will rely on~\cite{FP} by Fillastre--Prosanov instead of~\cite{Smi}. We now recall the basics of polyhedral surfaces in Minkowski 3-space $\R^{2,1}$. We adapt some notions from the previous subsection to this setting.

Pick $\rho \in \hat{\mc R}^0$. Associated with it are two maximal open $\rho$-invariant convex sets, on which the action of $\rho$ is free and properly discontinuous, one is future-complete and one is past-complete. We denote them by $D_+^0(\rho)$ and $D_-^0(\rho)$. See~\cite{Mes} of Mess for details.

Again, let $V \subset S$ be a finite subset and $\tilde V \subset \tilde S$ be its full preimage, equipped with the natural $\pi_1S$ action. Let $\rho \in \hat{\mc R}^0$ and let $f: \tilde V \rar D_+^0(\rho)$ be a $\rho$-equivariant map. Denote by $\clconv(f)$ the closed convex hull of $f(\tilde V)$. In this setting, it is a future-complete set with one boundary component. Furthermore, this component is future-convex and spacelike. We denote it by $\Sigma(f)$. We say that $f$ is \emph{future-convex} if no point of $f(\tilde V)$ is a convex combination of others. (An interesting difference with the anti-de Sitter situation is that here is no notion of strict convexity for our maps due to the lack of convex cores in this setting.) Note that then $f(\tilde V) \subset \Sigma(f)$. The surface $\Sigma(f)$ also has a a decomposition into vertices, edges and faces, where the set of vertices is exactly $f(\tilde V)$, every edge is a geodesic segment between two vertices, and every face is isometric to a convex Euclidean polygon. 

Denote the space of quasi-Fuchsian equivariant future-convex maps by $\hat{\ms E}_+^0(V)$. By picking a lift in $\tilde V$ for each $v \in V$, the space $\hat{\ms E}_+(V)$ can be considered as a subset of $\hat{\mc R}^0 \times (\R^{2,1})^V$, from which it inherits a topology. This is an open subset, hence $\hat{\ms E}_+^0(V)$ becomes a smooth manifold. Denote by $\ms E_+^0(V)$ its quotient by $G$. It is a smooth manifold of dimension $6k-6+3n$, where $k$ is the genus of $S$ and $n:=|V|$. Similarly, we define quasi-Fuchsian equivariant past-convex maps and denote the respective spaces by $\hat{\ms E}_-^0(V)$ and $\ms E_-^0(V)$. The notions of automorphic, trivial and isometric vector fields extend to this setting accordingly.

Let $\ms S^0(V)$ be the space of Euclidean cone-metrics with cone-angles $>2\pi$ and vertices at $V$, considered up to isotopy. For $e \in \ms E_+^0(V)$ lifting to $(\rho, f)$, the induced metric on $\Sigma(f)$ can be associated with an element in $\ms S^0(V)$. This defines the induced metric map
\[\ms I_{V, +}^0: \ms E_+^0(V) \rar \ms S^0(V).\]
It was shown in~\cite[Lemma 2.13]{FP} that this map is $C^1$. Moreover, the authors proved

\begin{thm}
\label{FP1}
The map $\ms I_{V, +}^0$ is a surjective submersion.
\end{thm}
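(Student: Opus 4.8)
The plan is to reference and adapt the corresponding result of Fillastre--Prosanov~\cite{FP}, which is the Minkowski polyhedral analogue of what we need here, and to spell out the two things that must be checked: that $\ms I^0_{V,+}$ is smooth (in fact $C^1$, which is already recalled from~\cite[Lemma 2.13]{FP}) and that its differential is surjective at every point. Since the source space $\ms E^0_+(V)$ has dimension $12k-12+3n$ while $\ms S^0(V)$ has dimension $6k-6+n$, surjectivity of the differential is equivalent to showing that at every $e \in \ms E^0_+(V)$ the rank of $d_e \ms I^0_{V,+}$ equals $6k-6+n = \dim \ms S^0(V)$. Equivalently, one must show that the kernel of $d_e \ms I^0_{V,+}$ has the expected dimension $6k-6+2n$, i.e. that no ``unexpected'' infinitesimal deformation of the configuration of vertices can preserve the induced cone-metric beyond the trivial ones (restrictions of Killing fields, which form a $3$-dimensional space, together with the $6k-9+\dots$ count coming from the holonomy directions transverse to the trivial ones). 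In other words, the heart of the matter is an \emph{infinitesimal rigidity} statement: a $\dot\rho$-automorphic vector field $v$ on $f(\tilde V)$ that induces zero variation on every edge-length of a triangulation of $\Sigma(f)$ must be trivial.

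First I would recall the setup from~\cite{FP}: a future-convex equivariant polyhedral map $f:\tilde V \to D_+(\rho) \subset \R^{2,1}$ determines the spacelike future-convex surface $\Sigma(f)$ with its induced Euclidean cone-metric, and a first-order deformation consists of a cocycle $\dot\rho \in Z^1_\rho(\pi_1 S, \mf g^0)$ together with a $\dot\rho$-automorphic vector field $v$ on the vertex set. The induced variation of the metric is read off from the variations of the edge-lengths of any triangulation refining the face decomposition. The claim is then that the assignment $(\dot\rho, v) \mapsto$ (variation of edge-lengths, modulo the trivial kernel) is onto. This is exactly~\cite[Theorem 3.\ldots]{FP} (the submersion statement for equivariant convex polyhedral Cauchy surfaces in Minkowski space), and I would simply invoke it, checking only that the hypotheses match: closed orientable $S$ of genus $\geq 2$, quasi-Fuchsian linear holonomy, nonempty finite vertex set, future-convex configuration --- all of which are built into our definition of $\hat{\ms E}^0_+(V)$.

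The main obstacle, if one wished to reprove rather than cite, is the infinitesimal rigidity / surjectivity argument itself, which in~\cite{FP} proceeds via a careful analysis of the space of ``Codazzi-type'' cocycles on the polyhedral surface --- an analogue of the Gauss--Codazzi parametrization used in the smooth setting of Section~\ref{secmink}, but now combinatorial: one works with the discrete support function of $\Sigma(f)$ over the Gauss image and shows that the linearized length map has the right rank by a dimension count plus a positivity (convexity) argument ruling out a nontrivial kernel. This is where the polyhedral peculiarities enter, and it is the reason we do not attempt a self-contained proof. For our purposes, however, it suffices to note that all of this is already established in~\cite{FP} in exactly the generality we need, so Theorem~\ref{FP1} follows directly, and together with the $C^1$-regularity from~\cite[Lemma 2.13]{FP} it gives the surjective submersion claimed.
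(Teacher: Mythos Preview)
Your overall approach---invoking \cite{FP} directly---is exactly what the paper does; the paper's entire proof is the sentence ``This is a direct corollary of~\cite[Theorem I]{FP}.'' So the strategy is fine.

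However, the elaboration you give is muddled in several places. First, the dimension counts are wrong: $\ms S^0(V)$ has dimension $6k-6+3n$, not $6k-6+n$ (edge-length coordinates on a triangulation with $n$ vertices give $3n+6k-6$ edges), so the expected fiber dimension is $(12k-12+3n)-(6k-6+3n)=6k-6$, not $6k-6+2n$. Second, and more seriously, the ``infinitesimal rigidity'' you describe---that an isometric automorphic field must be trivial---is \emph{false} for a single surface and is not what is needed. The kernel of $d_e\ms I^0_{V,+}$ is the tangent space to the fiber $\ms E^0_+(s)$, which by Theorem~\ref{FP2} is diffeomorphic to $\ms T$ and hence $(6k-6)$-dimensional; these are genuine nontrivial isometric deformations (change $\rho$ while keeping the induced metric). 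Infinitesimal rigidity in the sense you state only holds for a \emph{pair} of surfaces sharing a holonomy (Lemma~\ref{infrigminkp}), not for one. So your reduction of the submersion property to a single-surface rigidity statement does not work. Finally, you do not address surjectivity of $\ms I^0_{V,+}$ itself (every Euclidean cone-metric with angles $>2\pi$ is realized), which is the other half of ``surjective submersion'' and is also part of \cite[Theorem I]{FP}.

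In short: the citation is right, but if you want to sketch what \cite{FP} actually proves, it is (i) a realization statement giving surjectivity, and (ii) that the fibers are smooth of dimension $6k-6$ (equivalently, the differential has full rank), not a single-surface infinitesimal rigidity.
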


This is a direct corollary of~\cite[Theorem I]{FP}. See also the discussion in~\cite[Section 4.1]{FP}. For $s \in \ms S^0(V)$ denote $(\ms I_{V, +}^0)^{-1}(s)$ by $\ms E_+^0(s)$. Due to Theorem~\ref{FP1}, this is a $C^1$-submanifold of $\ms E_+^0(V)$. We now define a map
\[\phi_{s,+}^0: \ms E^0_+(s) \rar T\ms T,\]
which sends a $\rho$-equivariant map to $\rho$ (recall that $\mc R^0 \cong T\ms T$). It is also $C^1$, see~\cite[Section 4.1]{FP}. We define similarly the space $\ms E^0_-(s)$ and the map $\phi^0_{s,-}$. It is another direct corollary of~\cite[Theorem I]{FP} that

\begin{thm}
\label{FP2}
For any Euclidean cone-metric $s$ with cone-angles $>2\pi$, the compositions of $\phi_{s, \pm}^0$ with the projection to $\ms T$ are $C^1$-diffeomorphisms.
\end{thm}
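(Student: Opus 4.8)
The plan is to derive this statement from Theorem~\ref{FP1} and~\cite[Theorem I]{FP}, in exactly the same way Theorem~\ref{TV} is derived from Trapani--Valli in the smooth case. Write $\Phi_\pm$ for the composition of $\phi^0_{s,\pm}$ with the projection $T\ms T\rar\ms T$; the goal is to show $\Phi_+$ and $\Phi_-$ are $C^1$-diffeomorphisms onto $\ms T$. First I would reduce to the future-convex case via the involution $f\mapsto -f$: if $(\rho,f)$ is a quasi-Fuchsian future-convex equivariant map with induced metric $s$, then $-f$ is equivariant for the representation with the same linear part and opposite translation cocycle, is past-convex, and has the same induced metric, since $x\mapsto -x$ is an isometry of $\R^{2,1}$. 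This descends to a $C^1$-diffeomorphism $\ms E_+^0(s)\rar\ms E_-^0(s)$ intertwining the two linear-holonomy maps to $\ms T$, so it suffices to handle $\Phi_+$.

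Next I would establish bijectivity. By construction a point of $\ms E_+^0(s)=(\ms I_{V,+}^0)^{-1}(s)$ is the class of a quasi-Fuchsian future-convex equivariant polyhedral map with induced metric $s$, and $\Phi_+$ records the conjugacy class of the linear part of its holonomy as a point of $\mc R_F\cong\ms T$. Thus surjectivity and injectivity of $\Phi_+$ say precisely that, for the given Euclidean cone-metric $s$ (all cone-angles $>2\pi$) and every $\sigma\in\ms T$, there is one and only one such embedding, up to $G^0$, with linear holonomy $\sigma$; this is the content of~\cite[Theorem I]{FP}. Hence $\Phi_+$ is a bijection onto $\ms T$, and it is $C^1$ because $\phi_{s,+}^0$ is $C^1$ on the $C^1$-submanifold $\ms E_+^0(s)$ by Theorem~\ref{FP1} and~\cite[Section 4.1]{FP}.

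It remains to upgrade this $C^1$ bijection to a $C^1$-diffeomorphism, which by the inverse function theorem reduces to checking that $d_e\Phi_+$ is an isomorphism for every $e\in\ms E_+^0(s)$. Source and target have the same dimension: $\dim\ms E_+^0(V)=12k-12+3n$, the map $\ms I_{V,+}^0$ is a submersion onto $\ms S^0(V)$ with $\dim\ms S^0(V)=6k-6+3n$, so $\dim\ms E_+^0(s)=6k-6=\dim\ms T$. Thus it is enough to show $d_e\Phi_+$ is injective, i.e.\ that a first-order deformation of $e$ fixing both the induced metric $s$ and the linear part of the holonomy is trivial. This infinitesimal rigidity is the step requiring real work, and it is again supplied by~\cite[Theorem I]{FP}: there the derivative at $e$ of the map $\ms E_+^0(V)\rar\ms S^0(V)\times\ms T$ sending $e'$ to its induced metric and linear holonomy is an isomorphism onto $T_s\ms S^0(V)\oplus T_\sigma\ms T$, and its restriction to $\ker d_e\ms I_{V,+}^0=T_e\ms E_+^0(s)$ is exactly $d_e\Phi_+$, hence an isomorphism onto $T_\sigma\ms T$. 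The inverse function theorem then makes $\Phi_+$ a local $C^1$-diffeomorphism, and combined with bijectivity, a global $C^1$-diffeomorphism onto $\ms T$; by the first paragraph the same holds for $\Phi_-$. The only genuinely nontrivial input is~\cite[Theorem I]{FP}; everything else is the dimension bookkeeping and the formal passage through the inverse function theorem.
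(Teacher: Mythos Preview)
Your proposal is correct and matches the paper's treatment: the paper gives no argument at all for Theorem~\ref{FP2}, simply recording it as ``another direct corollary of~\cite[Theorem I]{FP}''. Your write-up is precisely the natural unpacking of that citation---bijectivity from the existence and uniqueness in~\cite[Theorem I]{FP}, the dimension count $\dim\ms E_+^0(s)=6k-6=\dim\ms T$ via Theorem~\ref{FP1}, and the local inversion from the infinitesimal statement in~\cite[Theorem I]{FP}---together with the reduction of the past-convex case to the future-convex one via $f\mapsto -f$, exactly as in the smooth setting of Section~\ref{secmink}.
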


In particular, the maps $\phi^0_{s, \pm}$ are $C^1$-immersions. 
From~\cite[Theorem II]{FP}, we have

\begin{thm}
\label{FP3}
Pick two finite subsets $V_+, V_- \subset S$ and metrics $s_+ \in \ms S^0(V_+)$, $s_- \in \ms S^0(V_-)$. Then the intersection of $\phi^0_{s_+,+}$ and $\phi^0_{s_-,-}$ is unique and transverse.
\end{thm}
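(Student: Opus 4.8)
The plan is to translate Theorem~\ref{FP3} into a simultaneous isometric realization problem in Minkowski $3$-space and then read it off from \cite[Theorem II]{FP}, in the same way the smooth Minkowski rigidity statement Corollary~\ref{transverse} was obtained from Smith's work~\cite{Smi}.

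First I would record what the two submanifolds look like. By Theorem~\ref{FP2}, composing $\phi^0_{s_+,+}$ and $\phi^0_{s_-,-}$ with the bundle projection $T\ms T\to\ms T$ yields $C^1$-diffeomorphisms onto $\ms T$; in particular each $\phi^0_{s_\pm,\pm}$ is a $C^1$-embedding whose image is the graph of a $C^1$ vector field on $\ms T$. Denote these vector fields by $X_+$ and $X_-$, so that the intersection of $\phi^0_{s_+,+}$ and $\phi^0_{s_-,-}$, as submanifolds of $T\ms T$, is the set of $(\sigma,X_+(\sigma))$ with $X_+(\sigma)=X_-(\sigma)$. Unwinding the definitions of $\phi^0_{s_\pm,\pm}$ together with the identification $\mc R^0\cong T\ms T$ from~(\ref{rep0}), a point of this intersection is the same datum as a quasi-Fuchsian representation $\rho_0\colon\pi_1 S\to G^0$ that simultaneously admits a $\rho_0$-equivariant future-convex polyhedral realization of $s_+$ in $D_+(\rho_0)$ and a $\rho_0$-equivariant past-convex polyhedral realization of $s_-$ in $D_-(\rho_0)$; the passage in both directions is a bijection because, by Theorem~\ref{FP2}, once the linear part of $\rho_0$ (equivalently the underlying point of $\ms T$) is fixed, each of the two realizations is unique. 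Since $D_+(\rho_0)$ and $D_-(\rho_0)$ are disjoint, these two conditions interact only through the shared holonomy $\rho_0$.

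With this dictionary in hand, the uniqueness clause of Theorem~\ref{FP3} is precisely the assertion that there is exactly one quasi-Fuchsian $\rho_0\in\mc R^0$ realizing $s_+$ as a future-convex and $s_-$ as a past-convex Euclidean cone metric, which is the existence-and-uniqueness content of \cite[Theorem II]{FP}. For the transversality clause I would use that both images are graphs of $C^1$ vector fields over $\ms T$, while $\dim\ms T=6k-6$ and $\dim T\ms T=12k-12$: since $T\ms T$ is a vector bundle, $X_+-X_-$ is a genuine vector field vanishing at the intersection point $\sigma_0$, and the two graphs are transverse at $(\sigma_0,\rho_0)$ if and only if the linearization $d_{\sigma_0}(X_+-X_-)\colon T_{\sigma_0}\ms T\to T_{\sigma_0}\ms T$ is invertible. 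Through the dictionary this invertibility is exactly the infinitesimal rigidity of the simultaneous realization: no nonzero first-order deformation of $\sigma_0$ moves both the future-convex realization of $s_+$ and the past-convex realization of $s_-$ with a common variation of the holonomy. This infinitesimal statement is, again, part of \cite[Theorem II]{FP}; in that paper it is established along the lines we already reproduced for Corollary~\ref{transverse}, by exhibiting on $\ms T$ a $C^2$ functional whose critical points are the sought representations and which is strictly convex for a suitable metric, so that its Hessian at the unique critical point is non-degenerate.

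The hard part is not the reduction above but lies entirely inside \cite{FP}: in the polyhedral category the maps $\phi^0_{s_\pm,\pm}$, the induced-metric maps, and the relevant functional are only of class $C^1$ (respectively $C^2$), so the variational and Hessian arguments that are routine in the smooth case must be carried out with care — which is exactly what \cite{FP} does. Granting \cite[Theorem II]{FP}, the only remaining task is the bookkeeping of the dictionary: identifying \cite{FP}'s notion of an equivariant polyhedral isometric embedding of a Euclidean cone metric in $\R^{2,1}$ with the spaces $\ms E^0_\pm(s)$ up to the isotopy ambiguity noted earlier, and checking that ``common point of the two graphs in $T\ms T$'' coincides with ``common Minkowski holonomy'', which is immediate from Theorem~\ref{FP2}.
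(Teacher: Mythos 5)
Your proposal is correct and matches the paper's treatment: Theorem~\ref{FP3} is quoted there as a direct consequence of \cite[Theorem II]{FP}, with exactly the remark you make that the only work is translating between the graph-of-vector-fields formulation (via Theorem~\ref{FP2} and the identification $\mc R^0\cong T\ms T$) and the simultaneous-realization formulation of \cite{FP}. Your additional commentary on how \cite{FP} itself proves the transversality is speculative but harmless, since the statement is being cited rather than reproved.
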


Note that in~\cite{FP} Theorem~\ref{FP3} is formulated in the language of vector fields given by Theorem~\ref{FP2}.
Now we can show

\begin{lm}
\label{infrigminkp}
Pick two finite subsets $V_+, V_- \subset S$.
Consider $e_+=(\rho, f_+) \in \ms E_+^0(V_+)$, $e_-=(\rho, f_-) \in \ms E_-^0(V_-)$ and $\dot \rho \in Z^1_\rho(\mf g^0)$. Let $v$ be a $\dot\rho$-automorphic isometric vector field on $f_+(\tilde V_+)$ and $f_-(\tilde V_-)$. Then $v$ is trivial.
\end{lm}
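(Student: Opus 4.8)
The plan is to mimic the proof of Lemma~\ref{infrigmink} from the smooth case almost verbatim, replacing the role of Theorem~\ref{TV} and Corollary~\ref{transverse} by Theorem~\ref{FP2} and Theorem~\ref{FP3}. First I would let $s_+ \in \ms S^0(V_+)$ and $s_- \in \ms S^0(V_-)$ be the induced cone-metrics on $\Sigma(f_+)$ and $\Sigma(f_-)$. Since $v$ is isometric, it induces zero variation on all edge-lengths of a triangulation, hence zero variation $\dot s_+ = 0$, $\dot s_- = 0$ of the induced metrics; in other words the deformation $(\dot\rho, v)$ is tangent to the fibers $\ms E_+^0(s_+)$ and $\ms E_-^0(s_-)$ at $e_+$ and $e_-$ respectively.

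Next I would split into cases according to whether $v$ moves the polyhedral surfaces at all. If $v$ is trivial on $f_+(\tilde V_+)$, i.e. it is the restriction of a Killing field, then $\dot\rho$ is a coboundary; by Theorem~\ref{FP2} the map $\phi^0_{s_-,-}$ (composed with a projection to $\ms T$) is a diffeomorphism, so the vanishing of the holonomy variation forces $v$ to be trivial on $f_-(\tilde V_-)$ as well, and in fact to be the restriction of the same Killing field on both surfaces, so $v$ is trivial. So assume $v$ is nontrivial on $f_+(\tilde V_+)$; then it determines a nonzero tangent vector $\dot e_+ \in T_{e_+}\ms E_+^0(s_+)$, and similarly we may assume $v$ gives a nonzero $\dot e_- \in T_{e_-}\ms E_-^0(s_-)$ (if it were zero there, run the previous case with the roles reversed). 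Because $v$ is $\dot\rho$-automorphic simultaneously on both surfaces, by construction of $\phi^0_{s_\pm,\pm}$ we get
\[
d_{e_+}\phi^0_{s_+,+}(\dot e_+) = d_{e_-}\phi^0_{s_-,-}(\dot e_-),
\]
and by Theorem~\ref{FP2} this common vector is nonzero (since $\phi^0_{s_+,+}$ is an immersion and $\dot e_+ \neq 0$). But this exhibits a nonzero tangent vector lying simultaneously in the tangent spaces to the images of $\phi^0_{s_+,+}$ and $\phi^0_{s_-,-}$ at their intersection point, contradicting the transversality asserted by Theorem~\ref{FP3}, which says these two submanifolds of $T\ms T$ meet transversely (and the dimensions are complementary, so transversality forces the tangent spaces to intersect only in $0$).

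The only genuinely new point compared with the smooth case is bookkeeping: one must be careful that the ``isometric'' condition on $v$ (defined via edge-lengths of an auxiliary triangulation) is exactly equivalent to tangency to the fiber $\ms E^0_\pm(s_\pm)$ of the induced-metric map, and that the differential identity above is the correct infinitesimal translation of ``$v$ is $\dot\rho$-automorphic on both surfaces at once.'' I expect the main (though still minor) obstacle to be checking these compatibilities cleanly in the polyhedral setting, where the surfaces are not smooth and the relevant tangent spaces are described combinatorially; everything else is a formal transcription of the smooth argument. Once Lemma~\ref{infrigminkp} is in place, Lemma~\ref{infrigp} follows by applying the infinitesimal Pogorelov map exactly as in the proof of Lemma~\ref{infrig}: a deformation killed by $d_e\ms I_V$ lifts to a $\dot\rho$-automorphic isometric field $v$ on the two anti-de Sitter polyhedral surfaces (whose vertices lie in $C$ because $e$ is Fuchsian), Lemmas~\ref{pogaut} and~\ref{pogisom} turn $\Phi(v)$ into a $\Psi_\rho(\dot\rho)$-automorphic Minkowski-isometric field, Lemma~\ref{infrigminkp} shows it is trivial, hence $v$ is trivial and the original deformation vanishes.
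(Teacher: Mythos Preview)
Your proposal is correct and follows essentially the same approach as the paper: reduce to transversality of the images of $\phi^0_{s_+,+}$ and $\phi^0_{s_-,-}$ in $T\ms T$ via Theorems~\ref{FP2} and~\ref{FP3}, exactly paralleling the smooth argument of Lemma~\ref{infrigmink}. The paper's own proof is slightly terser (it collapses your case split into a single line), but the logic is identical, and your additional remarks on the bookkeeping and on deducing Lemma~\ref{infrigp} via the Pogorelov map are also in agreement with the paper.
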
 

\begin{proof}
Let $s_+$ and $s_-$ be the induced metrics.
If $v$ is nontrivial, then it defines a nonzero tangent vector $\dot e_+$ to $\ms E_+(s_+)$ at $s_+$ and $\dot e_-$ to $\ms E_-(s_-)$ at $e_-$. We then have
\begin{equation}
\label{tp}
d_{e_+}\phi^0_{s_+,+}(\dot e_+)=d_{e_-}\phi^0_{s_-,-}(\dot e_-).
\end{equation}
Due to Theorem~\ref{FP2}, expression~(\ref{tp}) is nonzero. But this contradicts Theorem~\ref{FP3}.
\end{proof}

We can now prove Proposition~\ref{infrigp}.

\begin{proof}[Proof of Proposition~\ref{infrigp}.]
As in the proof of Lemma~\ref{infrig}, we exploit the simultaneous model for anti-de Sitter and Minkowski 3-spaces as well as the Pogorelov map. Recall the model from Section~\ref{secmink}, particularly the point $o$ and the set $C$. Pick $e \in \ms E_F(V)$ and lift it to a triple $(\rho, f_+, f_-) \in \hat{\ms E}_F(V)$, where $\rho: \pi_1 S \rar G_F$ and $G_F$ fixes $o$. Then the images of $f_+$ and $f_-$ are contained in $C$. Pick $\dot e \in T_e\ms E(V)$ such that $d_e\ms I_V(\dot e)=0$. Then it is represented by a $\dot\rho$-automorphic isometric vector field on $f_+(\tilde V_+)$ and $f_-(\tilde V_-)$ for $\dot\rho\in Z^1_\rho(\mf g)$.

Recall the Pogorelov map $\Phi$ and the associated map $\Psi_\rho$. From Lemma~\ref{pogaut}, $\Phi(v)$ is a $\Psi_\rho(\dot\rho)$-automorphic vector field. Also, if $v$ induces a zero variation on the distance between two points, then this means that its restriction to these two points coincides with the restriction of a Killing field. (Indeed, by subtracting a Killing field, one can assume that at one point $v$ is zero. Then at the second point $v$ is orthogonal to the segment between the points and any such $v$ is induced by an infinitesimal rotation around the first point.) Hence, Lemma~\ref{ipm} implies that $\Phi(v)$ is isometric. By Lemma~\ref{infrigminkp}, it is trivial, thereby so is $v$. It follows that $\dot e=0$.
\end{proof}

\bibliographystyle{abbrv}
\bibliography{AdsRigidity}

\end{document}